\newtheorem{remark}{Remark}[section]
\def\a{\alpha}
\def\Lam{{\Lambda}}
\def\Ome{{\Omega}}
\def\Del{{\Delta}}
\def\nab{{\nabla}}
\def\p{{\partial}}
\def\reff#1{\eqref{#1}}
\def\norm#1#2{\Vert\,#1\,\Vert_{#2}}
\def\cQ{{\mathcal Q}}
\def\cR{{\mathcal R}}
\def\cS{{\mathcal S}}
\def\cT{{\mathcal T}}
\def\no{{\nonumber}}
\def\div{{\mbox{\rm div\,}}}
\def\p{{\partial}}
\def\tp{\widetilde{p}}
\def\nab{\nabla}
\def\Ome{\Omega}
\def\Del{\Delta}
\def\bE{\mathbf{E}}
\def\hE{\widehat{E}}
\def\bC{\mathbf{C}}
\def\bbf{\mathbf{f}}
\def\bu{\mathbf{u}}
\def\bv{\mathbf{v}}
\def\bw{\mathbf{w}}
\def\bH{\mathbf{H}}
\def\bV{\mathbf{V}}
\def\bP{\mathbf{P}}
\def\bV{\mathbf{V}}
\def\bX{\mathbf{X}}
\def\bLamu{\mathbf{\Lam}_{\bu}}
\def\bThetau{\mathbf{\Theta}_{\bu}}
\def\R{\mathbb{R}}
\def\T{{\mathcal T}_h}
\begin{document}


\title{Analysis of Fully Discrete Finite Element Methods for a System
of Differential Equations Modeling Swelling Dynamics of Polymer Gels } 

\author{
Xiaobing Feng\thanks{Department of Mathematics, The University of
Tennessee, Knoxville, TN 37996, U.S.A. ({\tt xfeng@math.utk.edu}). The
work of this author was partially supported by the NSF grant DMS-071083.}
\and
Yinnian He\thanks{College of Sciences, Xi'an Jiaotong University, Xi'an, 
Shaanxi 710049, P. R. China ({\tt heyn@mail.xjtu.edu.cn}). The work
of this author was partially supported by the NSF of China grant \#10671154 
and by the National Basic Research Program of China grant \#2005CB321703.}
}

\maketitle




\large

\begin{abstract}
The primary goal of this
paper is to develop and analyze some fully discrete finite element
methods for a displacement-pressure model modeling swelling dynamics
of polymer gels under mechanical constraints. In the model, the
swelling dynamics is governed by the solvent permeation and the
elastic interaction; the permeation is described by a pressure
equation for the solvent, and the elastic interaction is described
by displacement equations for the solid network of the gel. The
elasticity is of long range nature and gives effects for the solvent
diffusion. It is the fluid-solid interaction in the gel network
drives the system and makes the problem interesting and difficult.
By introducing an ``elastic pressure" (or ``volume change function")
we first present a reformulation of the original model, we then
propose a time-stepping scheme which decouples the PDE system
at each time step into two sub-problems, one of which is
a generalized Stokes problem for the displacement vector field
(of the solid network of the gel) and another is a diffusion
problem for a ``pseudo-pressure" field (of the solvent of the gel).
To make such a multiphysical approach feasible, it is vital to find
admissible constraints to resolve the uniqueness issue for
the generalized Stokes problem and to construct a
``good" boundary condition for the diffusion equation
so that it also becomes uniquely solvable. The key to the
first difficulty is to discover certain conservation laws (or conserved
quantities) for the PDE solution of the original model,
and the solution to the second difficulty is to use the
generalized Stokes problem to generate a boundary condition
for the diffusion problem. This then lays down the theoretical 
foundation for one to utilize any convergent Stokes solver (and its code) 
together with any convergent diffusion equation solver (and its code) 
to solve the polymer gel model. In the paper, the Taylor-Hood
mixed finite element method combined with the continuous 
linear finite element method are chosen as an example
to present the ideas and to demonstrate the viability of
the proposed multiphysical approach.
It is proved that, under a mesh constraint, both the proposed semi-discrete 
(in space) and fully discrete methods enjoy some discrete energy laws which 
mimic the differential energy law satisfied by the PDE solution. 
Optimal order error estimates in various norms are established for the 
numerical solutions of both the semi-discrete and fully discrete
methods. Numerical experiments are also presented to show the efficiency 
of the proposed approach and methods.
\end{abstract}

\begin{keywords}
Gels, soft matters, poroelasticity, Stokes equations, finite element methods,
inf-sup condition, fully discrete schemes, error estimates.
\end{keywords}

\begin{AMS}
65M12, 
65M15, 
65M60, 
\end{AMS}

\pagestyle{myheadings} \thispagestyle{plain} \markboth{XIAOBING FENG
AND YINNIAN HE} {FINITE ELEMENT APPROXIMATIONS OF SWELLING DYNAMICS
OF GELS}

\section{Introduction}\label{sec-1}

A gel is a soft poroelastic material which consists of a solid network
and a colloidal solvent. The solid network spans the volume of the
solvent medium. The solvent can
permeate through the solid network and the permeation can be controlled by
external forces. Both by weight and volume, gels are mostly liquid in
composition and thus exhibit densities similar to liquids. However, they have the
structural coherence of a solid and can be deformed. A gel network can be composed
of a wide variety of materials, including particles, polymers and proteins,
which then gives different types gels such hydrogels, organogels and xerogels
(cf. \cite{doi90,hamley07}).  Gels have some fascinating properties, in particular,
they display thixotropy which means that they become fluid when agitated, but
resolidify when resting. In general, gels are apparently solid, jelly-like
materials, they exhibit an important state of matter found in a wide variety
of biomedical and chemical systems (cf. \cite{doi90,de86,yd04a,yd04b}
and the references therein).

This paper develops and analyzes some fully discrete finite element 
methods for a displacement-pressure model for polymer gels. The model, 
which was proposed by M. Doi {\em et al} in \cite{doi90,yd04a,yd04b}, 
describes swelling dynamics of polymer gels (under mechanical constraints).  
Let $\Ome \subset \R^d\, (d=1,2,3)$ be a
bounded domain and denote the initial region occupied by the gel.
Let $\bu(x,t)$ denote the displacement of the gel at the point
$x\in \Ome$ in the space and at the time $t$,  $\bv_s(x,t)$ and $p(x,t)$
be the velocity and the pressure of the solvent at $(x,t)$.
Following \cite{doi90}, the governing equation for the
swelling dynamics of polymer gels are given by
\begin{align} \label{e1.1}
\div\bigl( \sigma(\bu) -p I\bigr) &= 0, \\
\xi (\bv_s-\bu_t)&= -(1-\phi) \nab p, \label{e1.2}\\
\div\bigl(\phi\bu_t+(1-\phi) \bv_s \bigr) &=0. \label{e1.3}
\end{align}
Here $\xi$ is the friction constant associated with the
motion of the polymer relative to the solvent, $\phi$ is the volume
fraction of the polymer, $I$ denotes the $d\times d$ identity
matrix, and $\sigma(\bu)$ stands for the stress tensor of the gel
network, which is given by a constitutive equation. In this paper,
we use the following linearized form of the stress tensor:
\begin{equation}\label{e1.4}
\sigma(\bu):=\bigl(K-\frac23G\bigr) \div \bu \,I + 2G\, \varepsilon(\bu),\qquad
\varepsilon(\bu):= \frac12 \bigl( \nab \bu +\nab\bu^T \bigr),
\end{equation}
where $K$ and $G$ are respectively the bulk and shear modulus of the gel
(cf. \cite{doi90,coussy04}).
We remark that \eqref{e1.1} stands for the force balance, \eqref{e1.2}
states Darcy's law for the permeation of solvent through the gel network,
and \eqref{e1.3} describes the incompressibility condition. In addition,
if we introduce the total stress $\widetilde{\sigma}(\bu, p):=\sigma(\bu)-p I$,
then equation \eqref{e1.1} becomes $\div \widetilde{\sigma}(\bu,p)=0$.

Substituting \eqref{e1.4} into \eqref{e1.1} and \eqref{e1.2}
into \eqref{e1.3} yield the following basic equations for swelling dynamics of
polymer gels (see \cite{yd04a})
\begin{alignat}{2} \label{e1.5}
\alpha \nab\div\bu + \beta \Del \bu &= \nab p,
&&\qquad \alpha:= K+\frac{G}3, \quad \beta:=G, \\
\div \bu_t &= \kappa \Del p, &&\qquad \kappa:= \frac{(1-\phi)^2}{\xi}, \label{e1.6}
\end{alignat}
which hold in the space-time domain $\Ome_T:=\Ome\times (0,T)$ for some given $T>0$.

To close the above system, we need to prescribe boundary and initial conditions.
Only one initial condition is required for the system, which is
\begin{equation}\label{e1.7}
\bu(\cdot,0)=\bu_0(\cdot) \qquad\mbox{in } \Ome.
\end{equation}
Various sets of boundary conditions are possible and each of them describes
a certain type mechanical condition and solvent permeation condition
(cf. \cite{yd04a,yd04b}). In this paper we consider the following
set of boundary conditions
\begin{align} \label{e1.8}
(\sigma(\bu)- p I)\nu =\bbf, \qquad
\frac{\p p}{\p \nu} = 0 \quad \mbox{on } \Ome_T:=\p\Ome\times (0,T),
\end{align}
where $\nu$ denotes the outward normal to $\p\Ome$.
\eqref{e1.8}$_1$ means that the mechanical force $\bbf$ is
applied on the boundary of the gel. Since $\frac{\p p}{\p \nu}
= (\bv_s-\bu_t)\cdot \nu$, hence, \eqref{e1.8}$_2$
implies that the solvent can not permeate through the gel boundary.
We also remark that the force function $\bbf$ must satisfy the
compatibility condition
\[
\int_{\p\Ome} \bbf\, dS =0.
\]

Problem \eqref{e1.5}--\eqref{e1.8} is interesting and difficult due to
its multiphysical nature which describes the complicate fluid and solid
interaction inside the gel network. It is numerically tricky to solve
because it is difficult to design a good and workable time-stepping
scheme. For example, one natural attempt would be at each time step
first to solve a Poisson problem for $p$ and then to solve a linear
elasticity problem
for $\bu$. However, this strategy is difficult to realize because
there is no good way to compute the source term $\div \bu_t$ for
the Poisson equation. In fact, the strategy even has a difficulty to start
due to the fact that no initial condition is provided for the pressure $p$.

To overcome the difficulty, in this paper we shall use a reformulation of
system \eqref{e1.5}--\eqref{e1.6}, which is now introduced. Define
\begin{equation}\label{e1.9}
q:= \div \bu.
\end{equation}
Physically, $q$ measures the volume change of the solid network of the
gel, and often called ``elastic pressure" or
``volume change function". Taking divergence on \reff{e1.5} yields
\begin{equation*}
(\alpha+\beta) \Del q = \Del p,
\end{equation*}
which and \reff{e1.6} imply that $q$ satisfies the following diffusion equation
\begin{equation}\label{e1.10}
q_t =D \Del q, \qquad D:=\kappa \bigl(K+\frac43 G\bigr).
\end{equation}
However, the usefulness of the above diffusion equation is hampered by the
lack of boundary condition for $q$.
We like to note that the above diffusion equation for $q$ was first noticed
by M. Doi \cite{doi_private}, but it was not utilized before
exactly because of the lack of boundary condition for $q$.
Nevertheless, using the new variable $q$ we can rewrite
\eqref{e1.5}--\eqref{e1.6} as
\begin{alignat}{2} \label{e1.11}
\beta \Del \bu &= \nab \tp, &&\qquad \tp:=p-\alpha q, \\
\div \bu &=q, &&  \label{e1.12} \\
q_t &=\kappa \Del p, &&\qquad p=\tp +\alpha q. \label{e1.13}
\end{alignat}
An immediate consequence of the above reformulation is
that \eqref{e1.11}-\eqref{e1.12} implies $(\bu, \tp)$ satisfies
the generalized Stokes equations with $q$ being the source term
at each time $t$, and $q$ satisfies a diffusion equation and
it interacts with $(\bu, \tp)$ only at the boundary $\p\Ome$.

This is a key observation because it not only reveals the underlying
physical process of swelling dynamics of the gel, but also gives the
``right" hint on how the problem should be solved numerically.
This indeed motivates the main idea of this paper, that is, {\em at each
time step, we first solve the generalized Stokes problem for $(\bu,\tp)$,
which in turn provides (implicitly) an updated boundary condition for $q$,
we then use this new boundary condition to solve the diffusion equation for
$q$. The process is repeated iteratively until the final time step is reached}.
However, in order to make this idea work, there is one crucial
issue needs to be addressed. That is, for a given $q$ the generalized Stokes
problem for $(\bu,\tp)$ is only unique up to additive constants.
Clearly, how to correctly enforce the uniqueness of the generalized Stokes
problem is the bottleneck of this approach. It is easy to understand
that one can not use arbitrary constraints to fix $(\bu,\tp)$ because this
will lead to bad or even divergent numerical schemes if the exact PDE solution
does not satisfy the constraints. Instead, the constraints
which can be used to fix $(\bu,\tp)$ should be those satisfied by
the exact solution of the PDE system. To the end, we need to
discover some invariant (or conserved) quantities for the
exact PDE solution. It turns out that the situation is
precisely what we anticipated and wanted.  We are able to show that
the exact PDE solution $(\bu, p,\tp, q)$ satisfies
the following identities (see Section \ref{sec-2} below for a proof):
\begin{align}\label{e1.14}
\int_\Ome q(x,t) dx &\equiv C_q:= \int_\Ome q_0(x) dx := \int_\Ome \div \bu_0(x) dx,\\
\int_\Ome p(x,t) dx &\equiv C_p:=c_d C_q 
-\frac{1}d \int_{\p\Ome} \bbf(x,t) \cdot x \, dS, \label{e1.15} \\
\int_{\p\Ome} \bu(x,t) \cdot \nu \, dS &\equiv C_{\bu}
:= \int_{\p\Ome} \bu_0(x) \cdot \nu \, dS= C_{q},
\label{e1.16}
\end{align}
where $d$ denote the dimension of $\Ome$ and
\begin{equation}\label{e1.17}
c_d:= \alpha + \frac{\beta}{d}
    =\begin{cases}
      K+\frac{5G}6 &\quad\mbox{if } d=2,\\
      K+\frac{2G}3 &\quad\mbox{if } d=3.
     \end{cases}
\end{equation}
Obviously, the right-hand sides of \eqref{e1.14} and \eqref{e1.16} are
constants. The right-hand side of \eqref{e1.15} is also a constant provided that
$\bbf$ is independent of $t$, otherwise, it is a known function of $t$.  In this
paper, we shall only consider the case that $\bbf$ is independent of $t$.
It follows from \eqref{e1.14} and \eqref{e1.15} that
\begin{equation}\label{e1.18}
\int_\Ome \tp(x,t) dx \equiv C_{\tp}:=C_p -\alpha C_q =\frac{\beta C_q}{d}
-\frac{1}d \int_{\p\Ome} \bbf(x) \cdot x \, dS.
\end{equation}
It is clear now that \eqref{e1.18} and \eqref{e1.16} provide two
natural conditions which can be used to uniquely determine the solution $(\bu, \tp)$
to the generalized Stokes problem \eqref{e1.11}--\eqref{e1.12} for
a given source term $q$.  This then leads to the following time-discretization
for problem \eqref{e1.5}--\eqref{e1.8}:

{\bf Algorithm 1:}

\begin{itemize}
\item[(i)]  Set $q^0=q_0:=\div \bu_0$ and $\bu^0:=\bu_0$.

\item[(ii)] For $n=0,1,2, \cdots$,  do the following two steps

{\em Step 1:} Solve for $(\bu^{n+1},\tp^{n+1})$ such that
\begin{alignat}{2} \label{e1.19}
-\beta \Del \bu^{n+1} + \nab \tp^{n+1} &=0,\quad
&&\qquad\mbox{in } \Ome_T, \\
\div \bu^{n+1} &=q^n &&\qquad\mbox{in } \Ome_T,  \label{e1.20} \\
\beta\frac{\p \bu^{n+1}}{\p\nu}-\tp^{n+1} \nu &= \bbf
&&\qquad\mbox{on } \p\Ome_T,  \label{e1.21} \\
(\tp^{n+1}, 1) =C_{\tp}, \quad \langle \bu^{n+1}, \nu\rangle &= C_{\bu}. &&\label{e1.22}
\end{alignat}

{\em Step 2:} Solve for $q^{n+1}$ such that
\begin{alignat}{2} \label{e1.23}
d_t q^{n+1} - \kappa \Del (\alpha q^{n+1}+\tp^{n+1}) &=0,
&&\qquad\mbox{in } \Ome_T,   \\
\alpha\frac{\p q^{n+1}}{\p \nu} &= -\frac{\p \tp^{n+1}}{\p \nu}
&&\qquad\mbox{on } \p\Ome_T,  \label{e1.24} \\
(q^{n+1}, 1)&=C_q, \label{e1.24a}
\end{alignat}
where $d_t q^{n+1}:= (q^{n+1}-q^n)/{\Del t}$.
\end{itemize}
We note that \eqref{e1.23} is the implicit Euler scheme, which 
is chosen just for the ease of presentation, it can be replaced by
other time-stepping schemes. \eqref{e1.24} provides a Neumann 
boundary condition for $q^{n+1}$. Another subtle issue is the
role which the initial value $\bu_0$ plays in the algorithm. 
Seems $\bu_0$ is only needed to produce $q_0$ and there is no need 
to have $\bu^0$ in order to execute the algorithm. However, to 
ensure the stability and convergence of the algorithm, it 
turns out that $\bu^0$ not only needs to be provided but also
must be carefully constructed when the algorithm is discretized 
(see Sections \ref{sec-3} and \ref{sec-4}).

The above algorithm has a couple attractive features.
First, it is easy to use. Second, it allows one to make
use of any available numerical methods (finite element,
finite difference, finite volume, spectral and discontinuous Galerkin)
and computer codes for the Stokes problem and the Poisson problem to
solve the gel swelling dynamics model \eqref{e1.5}--\eqref{e1.8}.
We remark that an almost same model as \eqref{e1.5}--\eqref{e1.8} also 
arise from different applications in poroelasticity and soil mechanics and
are known as Boit's consolidation model (cf. \cite{biot,murad} and 
the references therein). \cite{murad} proposed and analyzed a standard
finite element method which directly approximates $(\bu,p)$ under  
the (restrictive) divergence-free assumption on the initial condition $\bu_0$.


This paper consists of four additional sections. In Section \ref{sec-2}, we
first introduce notation used in this paper. We then
present a PDE analysis for the gel swelling dynamics
model \eqref{e1.5}--\eqref{e1.8},
which includes deriving a dissipative energy law, establishing existence and
uniqueness, and in particular, proving the conservation laws stated in
\eqref{e1.14}--\eqref{e1.16} and \eqref{e1.18}. In Section \ref{sec-3},
we first propose a semi-discrete (in space) finite element discretization 
for problem \eqref{e1.5}--\eqref{e1.8} based on the multiphysical 
reformulation \eqref{e1.11}--\eqref{e1.13}. The well-known 
Taylor-Hood mixed element and the $P_1$ conforming finite element 
are used as an example to present the ideas. It is proved that
the solution of the semi-discrete method satisfies a 
discrete energy law which mimics the differential energy
law enjoyed by the PDE solution, and the semi-discrete numerical 
solution also satisfies the conservation laws 
\eqref{e1.14}--\eqref{e1.16} and \eqref{e1.18}. 
We then derive optimal order error estimates in various norms
for the semi-discrete numerical solution. 
In Section \ref{sec-4}, fully discrete finite element
methods are constructed by combining the time-stepping 
scheme of Algorithm $1$ and the semi-discrete finite 
element methods of Section \ref{sec-3}. The main results of 
this section include proving a fully discrete
energy law for the numerical solution and establishing
optimal order error estimates for the fully discrete finite element 
methods. Finally, in Section \ref{sec-5}, we present some numerical
experiments to gauge the efficiency of the proposed approach and methods.

\section{PDE analysis for problem \eqref{e1.5}--\eqref{e1.8}} \label{sec-2}
The standard Sobolev space notation is used in this paper,
we refer to \cite{bs08,cia,temam} for their precise definitions.
In particular, $(\cdot,\cdot)$ and $\langle \cdot,\cdot\rangle$
denote respectively the standard $L^2(\Ome)$ and $L^2(\p\Ome)$ inner products.
For any Banach space $B$, we let $\mathbf{B}=[B]^d$ and use ${\mathbf{B}}'$
to denote its dual space. In particular, we use $(\cdot,\cdot)_{\small\rm dual}$
and $\langle \cdot,\cdot \rangle_{\small\rm dual}$ to denote
the dual products on $(H^1(\Ome))' \times H^1(\Ome)$ and
$\bH^{-\frac12}(\p\Ome)\times \bH^{\frac12}(\p\Ome)$, respectively.
$\norm{\cdot}{L^p(B)}$ is a shorthand notation for
$\norm{\cdot}{L^p((0,T);B)}$.

We also introduce the function spaces
\begin{align*}
&L^2_0(\Omega):=\{q\in L^2(\Omega);\, (q,1)=0\}, \qquad
\bX:=\{\bv\in \bH^1(\Ome);\, \langle \bv,\nu\rangle=0\}.
\end{align*}
It is well known \cite{temam} that the following so-called
inf-sup condition holds in the space $\bX\times L^2_0(\Ome)$:
\begin{align}\label{e2.0}
\sup_{\bv\in \bX}\frac{(\div \bv,\varphi)}{\norm{\nab \bv}{L^2}}
\geq \alpha_0 \norm{\varphi}{L^2} \qquad \forall
\varphi\in L^2_0(\Ome),\quad \alpha_0>0.
\end{align}

Throughout the paper, we assume $\Omega \subset \R^d$ be a bounded
polygonal domain such that $\Delta: H^1_0(\Omega) \cap H^2(\Omega)
\rightarrow L^2(\Omega)$ is an isomorphism; see ~\cite{GT,gra}.
In addition, $C$ is used to denote a generic positive constant which is
independent of $\bu,\,p,\,\tp,\,q$, and the mesh parameters $h$ and $\Del t$.

We now give a definition of weak solutions to \reff{e1.5}-\reff{e1.8}.
\begin{definition}\label{weak1}
Let $(\bu_0, \bbf) \in \bH^1(\Ome) \times \bH^{-\frac12}(\p\Omega)$,
and $\langle \bbf, 1 \rangle_{\small\rm dual} =0$.
Given $T > 0$, a tuple $(\bu,p)$ with
\[
\bu\in L^\infty\bigl(0,T; \bH^1(\Ome)),\quad \div \bu \in H^1(0,T;H^{-1}(\Ome)), \quad
p\in L^2 \bigl(0,T; H^1(\Omega)\bigr),
\]
is called a weak solution to \reff{e1.5}--\reff{e1.8},
if there hold for almost every $t \in [0,T]$
\begin{alignat}{2}\label{e2.1}
\bigl((\div\bu)_t, \varphi \bigr)_{\small\rm dual}
+ \kappa \bigl(\nab p, \nab \varphi \bigr)
&= 0 &&\quad\forall \varphi \in H^1(\Ome), \\
\alpha \bigl( \div\bu, \div\bv \bigr) + \beta  \bigl( \nab\bu, \nab\bv \bigr)
- \bigl( p, \div \bv \bigr) &= \langle \bbf, \bv \rangle_{\small\rm dual}
&&\quad\forall \bv\in \bH^1(\Ome), \label{e2.2} \\
\bu(0) &= \bu_0.  && \label{e2.3}
\end{alignat}
\end{definition}

\medskip
Similarly, we define weak solutions to problem \reff{e1.11}-\reff{e1.13},
\reff{e1.7}-\reff{e1.8}.

\begin{definition}\label{weak2}
Let $(\bu_0, \bbf) \in \bH^1(\Ome) \times \bH^{-\frac12}(\p\Omega)$,
and $\langle \bbf, 1 \rangle_{\small\rm dual} =0$.
Given $T > 0$, a triple $(\bu,\tp, q)$ with
\begin{alignat*}{2}
&\bu\in L^\infty\bigl(0,T; \bH^1(\Ome)), &&\qquad
 \tp\in L^2 \bigl(0,T; L^2(\Omega)\bigr), \\
&q\in L^\infty(0,T;L^2(\Ome))\cap H^1\bigl(0,T; H^{-1}(\Omega)\bigr), &&\qquad
\alpha q+\tp \in L^2(0,T; H^1(\Ome)),
\end{alignat*}
is called a weak solution to \reff{e1.11}--\reff{e1.13}, \reff{e1.7}-\reff{e1.8}
if there hold for almost every $t \in [0,T]$
\begin{alignat}{2}\label{e2.4}
\beta \bigl( \nab\bu, \nab\bv \bigr)-\bigl( \tp, \div \bv \bigr)
&= \langle \bbf, \bv \rangle_{\small\rm dual} &&\quad\forall \bv\in \bH^1(\Ome), \\
\bigl(\div\bu, \varphi \bigr) &= \bigl(q, \varphi \bigr)
&&\quad\forall \varphi \in L^2(\Ome), \label{e2.5} \\
\bu(0) &= \bu_0, && \label{e2.5a} \\
\bigl(q_t, \psi \bigr)_{\small\rm dual}+\kappa \bigl(\nab (\alpha q+\tp),
\nab \psi \bigr) &=0
&&\quad\forall \psi \in H^1(\Ome) , \label{e2.6} \\
q(0)&= q_0:=\div \bu_0. && \label{e2.7}
\end{alignat}
\end{definition}

\begin{remark}
(a) Clearly, $p:=\alpha q +\tp$ gives back the pressure in the original
formulation. What interesting is that both $q$ and $\tp$ are only
$L^2$-functions in the spatial variable but their combination
$\alpha q+\tp$ is an $H^1$-function. In other words, the new
formulation provides an $L^2-L^2$ decomposition for the pressure $p$.
It turns out that this decomposition will have a significant numerical impact
because it allows one to use low order (hence cheap) finite elements
to approximate $q$ and $\tp$ but still to be able to approximate
the pressure $p$ with high accuracy. 

(b) \eqref{e2.6} implicitly imposes the following boundary condition for $q$:
\begin{equation}
\alpha\frac{\p q}{\p \nu} = -\frac{\p \tp}{\p \nu} \qquad\mbox{on }\p \Ome.
\end{equation}
\end{remark}

Since problem \eqref{e2.1}--\eqref{e2.3} consists of two linear equations,
its solvability should follows easily if we can establish a priori
energy estimates for its solutions.  The following dissipative
energy law just serves that purpose.

\medskip
\begin{lemma}\label{lem2.1}
Every weak solution $(\bu,p)$ of problem \eqref{e1.5}--\eqref{e1.8} satisfies
the following energy law:
\begin{align}\label{e2.9}
E(t) + \kappa \int_0^t \norm{\nab p(s)}{L^2}^2\, ds =E(0) \qquad\forall t\in (0,T],
\end{align}
where
\[
E(t):= \frac12 \Bigl[ \beta \norm{\nab \bu(t)}{L^2}^2
+\alpha \norm{\div \bu(t)}{L^2}^2
-2\langle \bbf, \bu(t) \rangle_{\small\rm dual} \Bigr].
\]
Moreover,
\begin{equation}\label{e2.9a}
\norm{(\div \bu)_t}{L^2(H^{-1})} = \kappa \norm{\Del p}{L^2(H^{-1})}
\leq E(0)^{\frac12}.
\end{equation}
\end{lemma}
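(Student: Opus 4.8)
The plan is to run the classical energy argument, choosing test functions that force the coupling term $\bigl(p,\div\bu\bigr)$ to cancel. First I would test the elasticity equation \eqref{e2.2} with $\bv=\bu_t$. Because $\bbf$ does not depend on $t$, the right-hand side becomes $\langle\bbf,\bu_t\rangle_{\small\rm dual}=\frac{d}{dt}\langle\bbf,\bu\rangle_{\small\rm dual}$, and the first two terms on the left collapse to perfect time derivatives,
\[
\alpha\bigl(\div\bu,(\div\bu)_t\bigr)+\beta\bigl(\nab\bu,(\nab\bu)_t\bigr)
=\frac{\alpha}{2}\frac{d}{dt}\norm{\div\bu}{L^2}^2+\frac{\beta}{2}\frac{d}{dt}\norm{\nab\bu}{L^2}^2 .
\]
The only remaining term is $-\bigl((\div\bu)_t,p\bigr)_{\small\rm dual}$, and here the pressure equation enters: testing \eqref{e2.1} with $\varphi=p$ gives the key identity $\bigl((\div\bu)_t,p\bigr)_{\small\rm dual}=-\kappa\norm{\nab p}{L^2}^2$.

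Substituting this back produces the closed differential relation
\[
\frac{d}{dt}E(t)+\kappa\norm{\nab p}{L^2}^2=0 ,
\]
and integrating from $0$ to $t$ immediately yields the energy law \eqref{e2.9}. For \eqref{e2.9a}, I would read \eqref{e2.1} as the statement that $(\div\bu)_t=\kappa\Del p$ in $H^{-1}(\Ome)$, where $\Del p$ is the Neumann Laplacian defined through $\langle\Del p,\varphi\rangle=-(\nab p,\nab\varphi)$ (this pairing already encodes the boundary condition \eqref{e1.8}$_2$), which gives the asserted equality. The Cauchy--Schwarz bound $|\langle(\div\bu)_t,\varphi\rangle|=\kappa|(\nab p,\nab\varphi)|\le\kappa\norm{\nab p}{L^2}\norm{\varphi}{H^1}$ then shows $\norm{(\div\bu)_t}{H^{-1}}\le\kappa\norm{\nab p}{L^2}$; squaring, integrating in $t$, and invoking \eqref{e2.9} to bound $\kappa\int_0^t\norm{\nab p}{L^2}^2$ by $E(0)$ (which uses a lower bound for $E(t)$ obtained from a Poincaré--Wirtinger estimate absorbing $\langle\bbf,\bu\rangle_{\small\rm dual}$, via $\langle\bbf,1\rangle_{\small\rm dual}=0$) controls $\norm{(\div\bu)_t}{L^2(H^{-1})}$ by $E(0)^{1/2}$.

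The one genuine obstacle is regularity: for a weak solution with merely $\bu\in L^\infty(0,T;\bH^1(\Ome))$, the function $\bu_t$ need not belong to $\bH^1(\Ome)$, so testing \eqref{e2.2} with $\bv=\bu_t$ is only formal. I would make the argument rigorous by carrying it out at the level of a Galerkin approximation $(\bu_N,p_N)$ in finite-dimensional subspaces: there $\bu_N$ solves a linear ODE system, is smooth in $t$, and $\partial_t\bu_N$ is a legitimate test function, so the identity $\frac{d}{dt}E_N(t)+\kappa\norm{\nab p_N}{L^2}^2=0$ holds exactly and furnishes the uniform a priori bounds needed for existence. Passing to the limit with weak-$*$ convergence and weak lower semicontinuity gives the energy inequality, and uniqueness for this linear system (established separately) upgrades it to the equality \eqref{e2.9}. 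The only delicate point in the limit is the coupling term $\bigl((\div\bu_N)_t,p_N\bigr)_{\small\rm dual}$, whose passage is justified by the uniform $L^2(0,T;H^{-1}(\Ome))$ bound on $(\div\bu_N)_t$ supplied by \eqref{e2.9a} itself.
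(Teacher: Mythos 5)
Your formal computation coincides exactly with the paper's: take $\varphi=p$ in \eqref{e2.1} and $\bv=\bu_t$ in \eqref{e2.2}, add, and integrate in $t$ to get \eqref{e2.9}; and reading \eqref{e2.1} as $(\div\bu)_t=\kappa\Del p$ with the Neumann Laplacian $\langle \Del p,\varphi\rangle:=-(\nab p,\nab\varphi)$ gives \eqref{e2.9a} essentially as the paper does. You also correctly identify the one genuine obstacle, namely that $\bu_t$ need not lie in $\bH^1(\Ome)$ for a mere weak solution, so the test $\bv=\bu_t$ is only formal. It is precisely at this point, however, that your repair has a gap.

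Your Galerkin argument yields the energy identity for the approximations $(\bu_N,p_N)$, but in the limit weak-$*$ convergence and lower semicontinuity give only the energy \emph{inequality} $E(t)+\kappa\int_0^t\norm{\nab p(s)}{L^2}^2\,ds\le E(0)$; the claim that ``uniqueness upgrades it to the equality'' is a non sequitur. Uniqueness identifies the weak solution with the Galerkin limit and hence transfers the inequality to it, but nothing supplies the reverse inequality: $\liminf_N\norm{\nab\bu_N(t)}{L^2}^2\ge\norm{\nab\bu(t)}{L^2}^2$ goes the wrong way, and equality would require strong convergence of $\nab\bu_N(t)$, $\div\bu_N(t)$ in $L^2(\Ome)$ and of $\nab p_N$ in $L^2(0,T;L^2)$, which you have not established. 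The argument is also circular within this paper: uniqueness (Theorem \ref{thm2.1}) is itself deduced from the energy law \eqref{e2.9}, so it cannot be presupposed here unless you prove it independently. Moreover, the lemma asserts the law for \emph{every} weak solution, which a construction-by-Galerkin cannot reach without uniqueness in hand. The paper's route avoids all of this: it mollifies $\bu$ in time by a symmetric mollifier, tests \eqref{e2.2} with $\bu_t^\delta$, and passes $\delta\to0$, which yields the equality directly for an arbitrary weak solution (the coupling term is well defined since $(\div\bu)_t\in L^2(H^{-1})$ and $p\in L^2(H^1)$). A further minor point: your closing step bounding $\kappa\int_0^t\norm{\nab p}{L^2}^2$ by $E(0)$ tacitly needs $E(t)\ge 0$, but the Poincar\'e--Wirtinger estimate you invoke only gives $E(t)\ge -C\beta^{-1}\norm{\bbf}{\bH^{-1/2}(\p\Ome)}^2$; the paper is equally terse on this, so I flag it only as a shared looseness, not a defect peculiar to your write-up.
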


\begin{proof}
We first consider the case $\bu\in H^1(0,T;\bH^1(\Ome))$.
Setting $\varphi=p$ in \eqref{e2.1} and $\bv=\bu_t$ in \eqref{e2.2} yield
\begin{align*}
\bigl(\div \bu_t(t),p(t) \bigr) +\kappa \norm{\nab p}{L^2}^2 &=0, \\
\frac{d}{dt} \Bigl[ \frac{\alpha}2 \norm{\div \bu(t)}{L^2}^2
+ \frac{\beta}2 \norm{\nab \bu(t)}{L^2}^2 \Bigr]
&= \frac{d}{dt} \langle \bbf, \bu(t) \rangle_{\small\rm dual}
  + \bigl(p(t), \div \bu_t(t) \bigr) .
\end{align*}
\eqref{e2.9} follows from adding the above two equations
and integrating the sum in $t$ over the interval $(0,s)$ for any
$s\in (0,T]$.

If $(\bu,p)$ is only a weak solution, then $\bu_t$ could not be used as
a test function in \reff{e2.2}. However, this difficulty can be
easily overcome by using $\bu^\delta_t$ as a test function
in \reff{e2.2}, where $\bu^\delta$ denotes a mollification
of $\bu$ through a symmetric mollifier (cf. \cite[Chapter 7]{GT}),
and by passing to the limit $\delta\to 0$. 

Finally, \reff{e2.9a} follows immediately from 
\reff{e2.1} and \reff{e2.9}. The proof is completed.
\end{proof}

\medskip
\begin{remark}
Lemma \ref{lem2.1} and Theorem \ref{thm2.1} can be easily carried over
to the reformulated problem \eqref{e1.11}--\eqref{e1.13},
\eqref{e1.7}--\eqref{e1.8}. The only difference is that the
energy law \reff{e2.9} now is replaced by the following
equivalent energy law:
\begin{align}\label{e2.10}
J(t)+\kappa \int_0^t \norm{\nab [\tp(s)+\alpha q(s)] }{L^2}^2\, ds
=J(0) \qquad\forall t\in (0,T],
\end{align}
and \reff{e2.9a} is replaced by
\begin{equation}\label{e2.10a}
\norm{q_t}{L^2(H^{-1})}
\leq \sqrt{\kappa}\norm{\nab (\tp+\alpha q)}{L^2(L^2)} \leq J(0)^{\frac12}.
\end{equation}
Where
\[
J(t):= \frac12 \Bigl[\beta \norm{\nab \bu(t)}{L^2}^2
+\alpha \norm{q(t)}{L^2}^2
-2\langle \bbf, \bu(t) \rangle_{\small\rm dual} \Bigr].
\]
We also note that the weak solution to problem \eqref{e1.11}--\eqref{e1.13},
\eqref{e1.7}--\eqref{e1.8} is understood in the sense of
Definition \ref{weak2}.
\end{remark}

\begin{lemma}\label{lem2.2}
Weak solutions of problem \eqref{e1.5}--\eqref{e1.8} and problem
\eqref{e1.11}--\eqref{e1.13},\eqref{e1.7}--\eqref{e1.8}
satisfy the conservation laws \reff{e1.14}--\reff{e1.16},
and \reff{e1.18}.
\end{lemma}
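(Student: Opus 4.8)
The plan is to extract each conserved quantity by testing the weak formulations against carefully chosen, time-independent test functions and then integrating in time. I would treat the two formulations in parallel since the computations coincide; let me phrase things for the reformulated system \reff{e2.4}--\reff{e2.7}, the $(\bu,p)$ case being identical with $q$ replaced by $\div\bu$ (via \reff{e2.5}) and $\tp$ by $p-\alpha q$.

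First, for \reff{e1.14} I would take the constant test function $\psi\equiv 1\in H^1(\Ome)$ in \reff{e2.6}. Since $\nab\psi=0$ the diffusion term drops out, leaving $(q_t,1)_{\small\rm dual}=0$. Because $q\in H^1(0,T;H^{-1}(\Ome))$, the map $t\mapsto(q(t),1)$ is absolutely continuous with vanishing derivative, hence constant; evaluating at $t=0$ and using $q(0)=q_0=\div\bu_0$ gives $\int_\Ome q(x,t)\,dx\equiv C_q$ (in the $(\bu,p)$ formulation one uses $\varphi\equiv1$ in \reff{e2.1} instead). For \reff{e1.16} I would simply invoke the divergence theorem, $\int_{\p\Ome}\bu\cdot\nu\,dS=\int_\Ome\div\bu\,dx=\int_\Ome q\,dx$, which together with \reff{e1.14} yields the asserted constant value $C_{\bu}=C_q$ for all $t$.

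The heart of the lemma is \reff{e1.15} and \reff{e1.18}. The key idea is to use the position vector field $\bv(x)=x$ as a test function in the momentum balance — note $\bv\in\bH^1(\Ome)$ on a bounded domain, and although $\bv\notin\bX$ this is harmless since \reff{e2.4} and \reff{e2.2} are posed over all of $\bH^1(\Ome)$. With this choice $\div\bv=d$ and $\nab\bv=I$, so that $(\nab\bu,\nab\bv)=(\div\bu,1)=C_q$ and $(\tp,\div\bv)=d\int_\Ome\tp\,dx$. Substituting into \reff{e2.4} gives the linear relation $\beta C_q-d\int_\Ome\tp\,dx=\langle\bbf,x\rangle_{\small\rm dual}$, which solves to $\int_\Ome\tp\,dx=\frac{\beta}{d}C_q-\frac1d\langle\bbf,x\rangle_{\small\rm dual}$, i.e. \reff{e1.18}; feeding the same $\bv=x$ into \reff{e2.2} instead (with the extra term $\alpha(\div\bu,\div\bv)=\alpha d\,C_q$) produces \reff{e1.15} directly. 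For the reformulated system, \reff{e1.15} follows equivalently from \reff{e1.14}, \reff{e1.18} and $p=\alpha q+\tp$, which also serves as a consistency check.

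The one point requiring care — the main ``obstacle,'' though a mild one — is the interpretation of the boundary term. Since $\bbf\in\bH^{-1/2}(\p\Ome)$ is not a function, the integral $\int_{\p\Ome}\bbf\cdot x\,dS$ in \reff{e1.15}--\reff{e1.18} must be read as the duality pairing $\langle\bbf,\bv\rangle_{\small\rm dual}$ evaluated at the trace of $\bv(x)=x$; this is legitimate because $x\mapsto x$ is smooth, so its trace lies in $\bH^{1/2}(\p\Ome)$ and the pairing is well defined. I would also emphasize that all four identities hold for almost every $t$, consistent with the regularity imposed in Definitions \ref{weak1} and \ref{weak2}, and that the time-independence in \reff{e1.14} and \reff{e1.16} rests on the evolution equation, whereas \reff{e1.15} and \reff{e1.18} are ``instantaneous'' consequences of the elliptic momentum balance holding at each fixed $t$.
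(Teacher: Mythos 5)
Your proposal is correct and takes essentially the same route as the paper's proof: constant test functions in \reff{e2.6}, \reff{e2.1}, and \reff{e2.5} together with the divergence theorem yield \reff{e1.14} and \reff{e1.16}, and the position field $\bv=x$ (with $\nab x=I$, $\div x=d$) in \reff{e2.2} and \reff{e2.4} yields \reff{e1.15} and \reff{e1.18}. Your added remarks --- reading $\int_{\p\Ome}\bbf\cdot x\,dS$ as the duality pairing $\langle \bbf, x\rangle_{\small\rm dual}$ and justifying the constancy of $t\mapsto (q(t),1)$ via absolute continuity from $q\in H^1(0,T;H^{-1}(\Ome))$ --- are sound refinements of details the paper leaves implicit.
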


\begin{proof}
\reff{e1.14} and \reff{e1.16} follows immediately from taking
$\psi\equiv 1$ in \reff{e2.6}, $\varphi\equiv 1$ in both
\reff{e2.5} and \reff{e2.1} followed by integrating in $t$ and
appealing to the divergence theorem.

To prove \reff{e1.15}, taking $\bv=x$ in \reff{e2.2} and
using the identities $\nab x=I$ and $\div x =d$ we get
\[
\alpha \bigl( \div\bu, d\bigr) + \beta\bigl(\nab\bu, I\bigr)
- \bigl(p, d\bigr) =  \langle \bbf, x \rangle_{\small\rm dual}
\]
Hence,
\begin{align*}
\bigl(p, 1\bigr) &= \alpha \bigl(\div\bu, 1\bigr)
+\frac{\beta}{d}\bigl(\div\bu, 1\bigr)
- \frac{1}{d} \langle \bbf, x \rangle_{\small\rm dual} \\
&=\Bigl(\alpha +\frac{\beta}{d} \Bigr) \bigl(\div\bu_0, 1\bigr)
 -\frac{1}{d}\langle \bbf, x \rangle_{\small\rm dual} \\
&=c_d C_q -\frac{1}{d}\langle \bbf, x \rangle_{\small\rm dual},
\end{align*}
where $c_d$ and $C_q$ are defined in \reff{e1.17} and \reff{e1.14}.
So we obtain \reff{e1.15}.

Similarly, \reff{e1.18} follows by taking $\bv=x$ in \reff{e2.4}.
The proof is complete.
\end{proof}

\medskip
With the help of the above two lemmas, we can show the
solvability of problem \eqref{e1.5}--\eqref{e1.8}.

\begin{theorem}\label{thm2.1}
Suppose $(\bu_0, \bbf) \in \bH^1(\Ome) \times \bH^{-\frac12}(\p\Omega)$,
and $\langle \bbf, 1 \rangle_{\small\rm dual} =0$.  Then there exists a 
unique weak solution to \eqref{e1.5}--\eqref{e1.8} in the sense of 
Definition \ref{weak1}, and there exists a unique solution to 
to \reff{e1.11}--\reff{e1.13}, \reff{e1.7}--\reff{e1.8}.
\end{theorem}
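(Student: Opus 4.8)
The two formulations are equivalent: setting $q:=\div\bu$ and $\tp:=p-\alpha q$ carries a solution of \eqref{e2.1}--\eqref{e2.3} into one of \eqref{e2.4}--\eqref{e2.7} and back, with the stated regularity classes matching (here $\alpha q+\tp=p\in L^2(0,T;H^1(\Ome))$ while $q,\tp$ are only $L^2$ in space). Hence it suffices to treat \eqref{e2.1}--\eqref{e2.3}, and the plan is to obtain existence by a Galerkin (spatial semidiscretization) argument driven by the a priori bounds of Lemma \ref{lem2.1}, then to deduce uniqueness from the same energy identity, and finally to transfer both conclusions to the reformulated system by the equivalence.

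\textbf{Galerkin approximation.} I would fix increasing, eventually dense finite-dimensional subspaces $\bV_m\subset\bH^1(\Ome)$ and $Q_m\subset H^1(\Ome)$ and seek $(\bu_m,p_m)\in\bV_m\times Q_m$ satisfying \eqref{e2.1}--\eqref{e2.2} tested against $\bV_m\times Q_m$, with $\bu_m(0)$ the projection of $\bu_0$. Expanding in the bases, the momentum balance \eqref{e2.2} is an \emph{algebraic} relation $\mathbf{A}\mathbf{a}-\mathbf{B}\mathbf{b}=\mathbf{F}$, where $\mathbf{A}$ comes from $\alpha(\div\bu,\div\bv)+\beta(\nab\bu,\nab\bv)$, while the mass balance \eqref{e2.1} reads $\mathbf{B}^{T}\dot{\mathbf{a}}+\kappa\mathbf{L}\mathbf{b}=0$ with $\mathbf{L}$ the Neumann stiffness matrix. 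Eliminating $\mathbf{a}$ leaves a linear ODE for $\mathbf{b}$ governed by the Schur complement $\mathbf{B}^{T}\mathbf{A}^{-1}\mathbf{B}$, which is solvable on the relevant subspace once the spatial mean of $p_m$ has been fixed; this ODE produces $(\bu_m,p_m)$ on $[0,T]$.

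\textbf{Energy bounds and passage to the limit.} The essential point is that $(\bu_m)_t\in\bV_m$ is an admissible test function, so the computation of Lemma \ref{lem2.1} goes through verbatim at the discrete level (no mollification needed), yielding the discrete energy law and therefore uniform bounds: $\bu_m$ in $L^\infty(0,T;\bH^1(\Ome))$ and $\nab p_m$ in $L^2(0,T;L^2(\Ome))$ from the energy identity, the mean of $p_m$ from the conservation law \eqref{e1.15} (which holds for $\bu_m,p_m$ exactly as in Lemma \ref{lem2.2}), the full $L^2(0,T;H^1(\Ome))$ bound on $p_m$ by combining this with the inf-sup condition \eqref{e2.0}, and $(\div\bu_m)_t$ in $L^2(0,T;H^{-1}(\Ome))$ via \eqref{e2.9a}. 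I would then extract weakly and weakly-$*$ convergent subsequences; since \eqref{e2.1}--\eqref{e2.2} are \emph{linear} with constant coefficients, weak limits suffice to pass to the limit in every term and no Aubin--Lions compactness is required. The embedding $\div\bu\in H^1(0,T;H^{-1}(\Ome))\hookrightarrow C([0,T];H^{-1}(\Ome))$ gives meaning to and recovers the initial condition.

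\textbf{Uniqueness and the main obstacle.} For uniqueness, the difference $(\bw,r)$ of two solutions solves the homogeneous system with $\bbf=0$ and zero initial data, so \eqref{e2.9} forces $E(t)\equiv E(0)=0$, whence $\nab\bw\equiv0$ and $\nab r\equiv0$; then $r$ is spatially constant and testing \eqref{e2.2} with $\bv=x$ as in Lemma \ref{lem2.2} gives $(r,1)=0$, so $r\equiv0$ and $p$ is unique. The hard part is precisely the \emph{degenerate saddle-point structure}: the time derivative falls only on $\div\bu$, not on $\bu$ or $p$, so the Schur complement above is singular and the pressure is controlled only modulo its spatial mean. This forces one to recover the fluctuating part of $p$ from the inf-sup condition \eqref{e2.0} and to pin down its mean separately from \eqref{e1.15}. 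The residual spatial constant in $\bw$ is the familiar rigid-translation kernel of a pure-traction problem; the physically meaningful quantities $\nab\bu$, $\div\bu=q$, $\tp$ and $p$ are uniquely determined, and $\bu$ itself is fixed once a normalization consistent with \eqref{e1.14}--\eqref{e1.16} is imposed.
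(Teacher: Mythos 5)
Your proposal is correct and follows essentially the same route as the paper, which likewise obtains existence via an (abstract) Galerkin method with the energy law of Lemma \ref{lem2.1} supplying the uniform estimates, and uniqueness from that energy law combined with the conservation laws of Lemma \ref{lem2.2}; the paper simply states this in two lines and omits all details. Your elaborations --- the Schur-complement solvability of the semidiscrete system, the observation that linearity lets weak limits pass to the limit without Aubin--Lions compactness (where the paper invokes the ``standard compactness argument''), and the explicit acknowledgment of the additive-constant (rigid translation) kernel in $\bu$, which the paper's terse uniqueness claim glosses over --- are refinements of, not departures from, the paper's argument.
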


\begin{proof}
The uniqueness is an immediate consequence of the energy laws \reff{e2.9} 
and \reff{e2.10} and the conservation laws \reff{e1.14}--\reff{e1.16} 
and \reff{e1.18}.

The existence can be easily proved by using (abstract) Galerkin method
and the standard compactness argument (cf. \cite{temam}). The
energy laws \reff{e2.9} and \reff{e2.10} provide the necessary uniform estimates
for the Galerkin approximate solutions. We omit the details
because the derivation is standard.
\end{proof}

Again, exploiting the linearity of the system, we have the following 
regularity results for the weak solution.

\begin{theorem}\label{regularity} 
Let $(\bu,p,\tp,q)$ be the weak solution of \reff{e2.4}--\reff{e2.7} 
Then there holds the following estimates: 
\begin{align}\label{e2.11}
\sqrt{\beta} \norm{\sqrt{t} \nab \bu_t}{L^2(L^2)}
+\sqrt{\alpha} \norm{\sqrt{t} q_t}{L^2(L^2)}
+\sqrt{\kappa} \norm{\sqrt{t} \nab p}{L^\infty(L^2)} 
&\leq J(0)^{\frac12}. \\
\sqrt{\beta} \norm{t \nab \bu_t}{L^\infty(L^2)}
+\sqrt{\alpha} \norm{t q_t}{L^\infty(L^2)}
+\sqrt{\kappa} \norm{t \nab p_t}{L^2(L^2)} 
&\leq [2J(0)]^{\frac12}. \label{e2.11a} \\
\norm{tq_{tt}}{L^2(H^{-1})} &\leq  [2J(0)]^{\frac12}.  \label{e2.11b} \\
\kappa \sqrt{\alpha} \norm{\sqrt{t} \Del p}{L^2(L^2)} 
=\sqrt{\alpha} \norm{\sqrt{t} q_t}{L^2(L^2)} 
&\leq J(0)^{\frac12}.\label{e2.11c} \\
\kappa \sqrt{\alpha} \norm{t \Del p}{L^\infty(L^2)} 
= \sqrt{\alpha} \norm{t q_t}{L^\infty(L^2)} 
&\leq [2J(0)]^{\frac12}.\label{e2.11d} \\
\norm{\nab\tp}{L^2(L^2)} + \norm{\nab\tp}{L^\infty(L^2)} 
+ \alpha \sqrt{\kappa} \norm{\nab q}{L^2(L^2)} &\quad \label{e2.11e} \\
+\alpha \sqrt{\kappa} \norm{\sqrt{t}\nab q}{L^\infty(L^2)} 
&\leq C(C_{\tp}, J(0)). \no \\
\beta \norm{\Del \bu}{L^\infty (L^2)} = \norm{\nab\tp}{L^\infty(L^2)} 
&\leq C(C_{\tp}, J(0)). \label{e2.11f} 
\end{align}
Where $C(T,C_{\tp}, J(0))$ denotes some positive constant which depends
on $T, C_{\tp}$ and  $J(0)$.
\end{theorem}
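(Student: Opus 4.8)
The plan is to derive all the bounds by weighted-in-time energy arguments on the reformulated weak system \eqref{e2.4}--\eqref{e2.7} together with its first and second time-differentiated versions, performed formally and then justified on the Galerkin approximations used in Theorem \ref{thm2.1} (where $\bu_t$, $q_t$, $p_t$ are legitimate test functions) by passing to the limit. The engine of the whole argument is a pair of algebraic identities obtained by differentiating \eqref{e2.4}--\eqref{e2.5} in $t$ (recall $\bbf$ is independent of $t$): taking $\bv=\bu_t$ and $\varphi=\tp_t$ gives $\beta\norm{\nab\bu_t}{L^2}^2=(\tp_t,\div\bu_t)=(\tp_t,q_t)$, so that, writing $p_t=\alpha q_t+\tp_t$, the mixed pairing collapses to $(q_t,p_t)=\alpha\norm{q_t}{L^2}^2+\beta\norm{\nab\bu_t}{L^2}^2=:G(t)$; differentiating once more and taking $\bv=\bu_{tt}$, $\varphi=\tp_t$ gives in the same way $\langle q_{tt},p_t\rangle_{\small\rm dual}=\tfrac12 G'(t)$. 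These two identities convert the otherwise awkward $q$--$p$ pairings in the diffusion equation into clean energy quantities.

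With these in hand, I would prove \eqref{e2.11} by testing \eqref{e2.6} with $\psi=t\,p_t$, which yields $t\,G(t)+\tfrac{\kappa t}{2}\tfrac{d}{dt}\norm{\nab p}{L^2}^2=0$; integrating over $(0,s)$ and integrating the last term by parts in $t$ gives
\[
\int_0^s t\,G(t)\,dt+\frac{\kappa s}{2}\norm{\nab p(s)}{L^2}^2=\frac{\kappa}{2}\int_0^s\norm{\nab p}{L^2}^2\,dt\le\frac12 J(0),
\]
where the last inequality is the dissipation bound in the energy law \eqref{e2.10}. The three terms of \eqref{e2.11} are exactly $\sqrt\beta\norm{\sqrt t\nab\bu_t}{L^2(L^2)}$ and $\sqrt\alpha\norm{\sqrt t q_t}{L^2(L^2)}$ (from $\int_0^s tG$) together with $\sqrt\kappa\norm{\sqrt t\nab p}{L^\infty(L^2)}$. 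Estimate \eqref{e2.11a} follows by the same recipe one derivative higher: test the differentiated diffusion equation $\langle q_{tt},\psi\rangle_{\small\rm dual}+\kappa(\nab p_t,\nab\psi)=0$ with $\psi=t^2 p_t$, use $\langle q_{tt},p_t\rangle_{\small\rm dual}=\tfrac12 G'(t)$, integrate by parts in $t$, and bound the resulting $\int_0^s t\,G(t)\,dt$ by \eqref{e2.11}.

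The bounds \eqref{e2.11b}--\eqref{e2.11d} then come essentially for free. For \eqref{e2.11b}, the differentiated diffusion equation gives $\norm{q_{tt}}{H^{-1}}\le\kappa\norm{\nab p_t}{L^2}$ pointwise in $t$; multiplying by $t$ and invoking \eqref{e2.11a} closes it. For \eqref{e2.11c}--\eqref{e2.11d}, the implicit boundary condition $\alpha\,\p q/\p\nu=-\p\tp/\p\nu$ makes \eqref{e2.6} equivalent to the strong relation $q_t=\kappa\Del p$ (with $p\in H^2$ by Neumann elliptic regularity on the admissible domain), whence $\kappa\norm{\sqrt t\Del p}{L^2}=\norm{\sqrt t q_t}{L^2}$ and $\kappa\norm{t\Del p}{L^\infty(L^2)}=\norm{t q_t}{L^\infty(L^2)}$, and the right-hand sides are already controlled by \eqref{e2.11} and \eqref{e2.11a}.

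The remaining estimates \eqref{e2.11e}--\eqref{e2.11f} are of elliptic (spatial) type and are where I expect the real work. At each fixed $t$ I would eliminate $\tp$ and $q$ from \eqref{e2.4}--\eqref{e2.5} using $\tp=p-\alpha q$ and $q=\div\bu$ to recover the Lam\'e system $\beta\Del\bu+\alpha\nab\div\bu=\nab p$ carrying the traction boundary condition of \eqref{e1.8}; elliptic regularity for this system on the admissible domain gives $\norm{\bu}{H^2}\le C\bigl(\norm{\nab p}{L^2}+\norm{\bbf}{\bH^{-1/2}}+|C_{\tp}|\bigr)$, the constant depending on $C_{\tp}$ through the mean-value normalization that fixes $\tp$. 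The identity $\beta\Del\bu=\nab\tp$ then yields \eqref{e2.11f}, and $q=\div\bu$, $\nab\tp=\nab p-\alpha\nab q$ yield the $\nab q$ and $\nab\tp$ bounds in \eqref{e2.11e} from the already established control of $\nab p$ in \eqref{e2.10} and \eqref{e2.11}. The delicate point, and the main obstacle, is the uniform-in-time ($L^\infty(L^2)$) control of $\nab\tp$ and $\Del\bu$: since $\nab p$ is only bounded in $L^\infty(L^2)$ against the weight $\sqrt t$ near $t=0$ (consistent with the weight attached to the $\nab q$ term in \eqref{e2.11e}), the unweighted $L^\infty(L^2)$ bound for $\nab\tp$ must be extracted from the time-independence of the forcing $\bbf$ and the cancellation structure in $\nab\tp=\nab p-\alpha\nab q$ rather than from $\nab p$ alone; this is the step requiring the most care, and it is what forces the constant to depend on both $C_{\tp}$ and $J(0)$.
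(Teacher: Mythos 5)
Your treatment of \reff{e2.11}--\reff{e2.11d} coincides with the paper's own proof: the paper differentiates \reff{e2.4}--\reff{e2.6} in $t$ and tests with $\bv=t\bu_t$ (resp.\ $t^2\bu_{tt}$), $\varphi=t\tp_t$ (resp.\ $t^2\tp_t$), $\psi=tp_t$ (resp.\ $t^2p_t$), which is exactly your collapse identities $(q_t,p_t)=G(t)$ and $\langle q_{tt},p_t\rangle_{\rm dual}=\tfrac12 G'(t)$ combined with integration by parts in $t$ and the dissipation bound from \reff{e2.10}; \reff{e2.11b}--\reff{e2.11d} are then read off the differentiated diffusion equation just as you say.

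The genuine gap is in \reff{e2.11e}--\reff{e2.11f}, and you have identified it yourself without closing it. Your route through elliptic regularity for the Lam\'e system bounds $\norm{\bu}{H^2}$, hence $\norm{\nab\tp}{L^2}$, by $\norm{\nab p}{L^2}$; but $\nab p$ is controlled in $L^\infty(L^2)$ only with the weight $\sqrt t$, so this scheme cannot produce the \emph{unweighted} $L^\infty(L^2)$ bounds in \reff{e2.11e}--\reff{e2.11f} near $t=0$, and the appeal to ``cancellation structure in $\nab\tp=\nab p-\alpha\nab q$'' is not an argument: $\nab q$ is exactly as degenerate at $t=0$ as $\nab p$ (it carries the same $\sqrt t$ weight in \reff{e2.11e}), so no cancellation between two $\sqrt t$-weighted quantities can yield an unweighted bound. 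The missing idea in the paper is to estimate $\nab\tp$ without ever touching $\nab p$: choose $\phi_0\in C^1(\overline{\Ome})$ with $(\phi_0,1)=C_{\tp}$, so that $\tp-\phi_0\in L^2_0(\Ome)$, and apply the dual (equivalent) form of the inf-sup condition \reff{e2.0} to the momentum equation \reff{e2.4} tested against $\bv\in\bH^1_0(\Ome)$, for which $\bigl(\nab(\tp-\phi_0),\bv\bigr)=-\beta\bigl(\nab\bu,\nab\bv\bigr)+\bigl(\phi_0,\div\bv\bigr)$, giving pointwise in time
\[
\norm{\nab\tp}{L^2}\le \norm{\nab\phi_0}{L^2}
+\alpha_0^{-1}\Bigl[\beta\norm{\nab\bu}{L^2}+\sqrt{d}\,\norm{\phi_0}{L^2}\Bigr].
\]
The right-hand side involves $\nab\bu$, which the energy law \reff{e2.10} bounds uniformly on $[0,T]$ including at $t=0$ (since $\bu_0\in\bH^1(\Ome)$); this is why the $\nab\tp$ bound is unweighted, while the $\nab q$ bound, obtained afterwards from $\alpha\nab q=\nab p-\nab\tp$, inherits the $\sqrt t$ weight of \reff{e2.11}. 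It also explains the dependence of the constant on $C_{\tp}$ (through $\phi_0$). Finally, \reff{e2.11f} requires no elliptic regularity at all: it is the pointwise identity $\beta\Del\bu=\nab\tp$ combined with the bound just obtained. In short, your weighted-energy half is the paper's proof verbatim, but the elliptic half must be replaced by this inf-sup/pressure-stability argument; as proposed, \reff{e2.11e}--\reff{e2.11f} remain unproven.
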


\begin{proof} 
Differentiating each of equations \reff{e2.4}, \reff{e2.5} and \reff{e2.6} 
with respect to $t$ yields (note that $\bbf$ is assumed to be independent of $t$)
\begin{alignat}{2}\label{e2.12}
\beta \bigl( \nab\bu_t, \nab\bv \bigr)-\bigl( \tp_t, \div \bv \bigr)
&= 0 &&\quad\forall \bv\in \bH^1(\Ome), \\
\bigl(\div\bu_t, \varphi \bigr) &= \bigl(q_t, \varphi \bigr)
&&\quad\forall \varphi \in L^2(\Ome), \label{e2.13} \\
\bigl(q_{tt}, \psi \bigr)_{\small\rm dual}+\kappa \bigl(\nab (\alpha q+\tp)_t,
\nab \psi \bigr) &=0
&&\quad\forall \psi \in H^1(\Ome). \label{e2.14}
\end{alignat}
Taking $\bv=t \bu_t$ in \reff{e2.12}, $\varphi=t \tp_t$ in \reff{e2.13},
$\psi=t p_t=t(\tp+\alpha q)_t$ in \reff{e2.6}, and adding the resulted 
equations we get
\begin{align*}
t\beta\norm{\nab \bu_t}{L^2}^2 +t\alpha \norm{q_t}{L^2}^2
+\frac{\kappa}2 \frac{d}{dt} \Bigl(t\norm{\nab p}{L^2}^2\Bigr)
= \frac{\kappa}2 \norm{\nab p}{L^2}^2.
\end{align*}
Integrating in $t$ over $(0,T)$ gives \reff{e2.11}.
 
Alternatively, setting $\bv=t^2\bu_{tt}$ in \reff{e2.12}, $\varphi=t^2\tp_t$ 
in \reff{e2.13} (after differentiating in $t$ one more time),
$\psi=t^2 p_t=t^2 (\tp+\alpha q)_t$ in \reff{e2.14}, and adding the 
resulted equations we have
\begin{align*}
\frac12 \frac{d}{dt}\Bigl[t^2 \beta\norm{\nab \bu_t}{L^2}^2 
+t^2 \alpha \norm{q_t}{L^2}^2 \Bigr] + t^2 \kappa \norm{\nab p_t}{L^2}^2 
= t\beta\norm{\nab \bu_t}{L^2}^2 + t\norm{q_t}{L^2}^2. 
\end{align*} 
Integrating in $t$ over $(0,T)$ and using \reff{e2.11} yield \reff{e2.11a}.

\reff{e2.11b}--\reff{e2.11d} are immediate consequences of \reff{e2.14}, 
\reff{e2.11}, and \reff{e2.11a}.

Choose $\phi_0\in C^1(\overline{\Ome})$ such that $(\phi_0,1)=C_{\tp}$.
Then $\tp-\phi_0\in L^2_0(\Ome)$. By an equivalent version of the inf-sup
condition (cf. \cite{ber}) we have
\begin{align*}
\norm{\nab(\tp-\phi_0)}{L^2} &\leq \alpha_0^{-1} \sup_{\bv\in \bH_0^1(\Ome)} 
\frac{\bigl(\nab(\tp-\phi_0), \bv\bigr)}{\norm{\nab \bv}{L^2}}  \\
&\leq \alpha_0^{-1} \sup_{\bv\in \bH_0^1(\Ome)} 
\frac{-\beta \bigl(\nab\bu, \nab\bv\bigr)}{\norm{\nab \bv}{L^2}}  
+ \alpha_0^{-1}\sqrt{d} \norm{\phi_0}{L^2} \\
&\leq  \alpha_0^{-1} \Bigl[ \beta \norm{ \nab\bu}{L^2} +\sqrt{d} \norm{\phi_0}{L^2}\Bigr].
\end{align*}
Hence,
\[
\norm{\nab\tp}{L^2} \leq \norm{\nab \phi_0}{L^2} 
+ \alpha_0^{-1} \Bigl[ \beta \norm{ \nab\bu}{L^2} +\sqrt{d} \norm{\phi_0}{L^2}\Bigr],
\]
which together with \reff{e2.10} and \reff{e2.11} imply that 
(recall $p=\tp+\alpha q$)
\begin{align*}
&\norm{\nab\tp}{L^2(L^2)} \leq 
\sqrt{T} \norm{\nab\tp}{L^\infty(L^2)}
\leq \sqrt{T} \norm{\nab \phi_0}{L^2} 
+\alpha_0^{-1}\sqrt{T}\Bigl[\beta J(0)^{\frac12}
+\sqrt{d} \norm{\phi_0}{L^2}\Bigr],\\
&\alpha \sqrt{\kappa} \norm{\nab q}{L^2(L^2)} 
\leq  J(0)^{\frac12} + \sqrt{\kappa T} \norm{\nab \phi_0}{L^2} 
+ \alpha_0^{-1}\sqrt{\kappa T}\Bigl[\beta J(0)^{\frac12}
+\sqrt{d}\norm{\phi_0}{L^2}\Bigr],\\
&\alpha \sqrt{\kappa} \norm{\sqrt{t}\nab q}{L^\infty(L^2)}
\leq  J(0)^{\frac12} + \sqrt{\kappa T} \norm{\nab \phi_0}{L^2} 
+ \alpha_0^{-1} \sqrt{\kappa T } \Bigl[ \beta J(0)^{\frac12} 
+ \sqrt{d}\norm{\phi_0}{L^2}\Bigr].
\end{align*}
Hence, \reff{e2.11e} holds.

Finally, \reff{e2.11f} follows immediately from the equation
$\beta \Del \bu= \nab \tp$ and \reff{e2.11e}. The proof is complete.
\end{proof}

\section{Semi-discrete finite element methods in space}\label{sec-3}
The goal of this section is to present the ideas and specific 
semi-discrete finite element methods for discretizing the variational
problem \reff{e2.4}--\reff{e2.7} in space based on the 
multiphysical (deformation and diffusion) approach. That is, 
we shall approximate $(\bu,\tp)$ using a (stable) Stokes 
solver and approximate $q$ by a convergent diffusion equation solver.
The combination of the Taylor-Hood mixed finite element \cite{ber} 
and the conforming $P_1$ finite element
is chosen as a specific example to present the ideas.

\subsection{Formulation of finite element methods} \label{sec-3.1}
Assume $\Ome \subset \R^d$ ($d=2,3$) is a polygonal domain.
Let  $\cT_h$ is a quasi-uniform triangulation or rectangular
partition of $\Omega \subset \R^d$ with mesh size $h$ such that
$\overline{\Omega} = \bigcup_{K \in \T} \overline{K}$.
Also, let $(\bV_h,M_h)$ be a stable mixed finite element pair, that is, 
$\bV_h \subset \bH^1(\Ome)$ and $M_h \subset L^2(\Omega)$ satisfy
the inf-sup condition
\begin{align}\label{e4.1}
\sup_{\bv_h\in \bV_h\cap \bX}\frac{(\div \bv_h,\varphi_h)}{\norm{\nab
\bv_h}{L^2}}\geq \beta_0 \norm{\varphi_h}{L^2} \qquad \forall
\varphi_h\in M_h\cap L^2_0(\Ome), \quad \beta_0>0.
\end{align}

A well-known example that satisfies \reff{e4.1} is the following
so-called Taylor-Hood element (cf. \cite{ber}):
\begin{align*}
\bV_h &=\{\bv_h\in \bC^0(\overline{\Ome});\,
\bv_h|_K\in \bP_2(K)~~\forall K\in \cT_h\}, \\
M_h &=\{\varphi_h\in C^0(\overline{\Ome});\, \varphi_h|_K\in P_1(K)~~\forall K\in
\cT_h\}.
\end{align*}
In the sequel, we shall only present the analysis for the
Taylor-Hood element, but remark that the analysis can be
readily extended to other stable combinations. On the other
hand, constant pressure space is not recommended because that
would result in no rate of convergence for the approximation 
of the pressure $p$ (see Section \ref{sec-3.3}).

Approximation space $W_h$ for $q$ variable can be chosen 
independently, any piecewise polynomial space is acceptable as long 
as $W_h \supset M_h$. Especially, $W_h\subset L^2(\Ome)$ can be 
chosen as a fully discontinuous piecewise polynomial space, 
although it is more convenient to choose $W_h$ to be a continuous 
(resp. discontinuous) space if $M_h$ is a continuous (resp. discontinuous) 
space. The most convenient and efficient choice is $W_h =M_h$, 
which will be adopted in the remaining of this paper. 

We now ready to state our semi-discrete finite element method 
for problem \reff{e2.4}--\reff{e2.7}.
Let $q_0=\div \bu_0$. We seek $(\bu_h,\tp_h,q_h):\, 
(0,T]\to \bV_h\times M_h\times W_h$ and $(\bu_h(0),q_h(0))\in 
\bV_h\times W_h$ such that for all $t\in (0,T]$ there hold
\begin{alignat}{2} \label{e4.3}
\beta\bigl( \nab \bu_h,\nab \bv_h\bigr)- \bigl( \tp_h,\div \bv_h\bigr)
&=\langle \bbf, \bv_h\rangle_{\small\rm dual} &&\qquad\forall \bv_h\in \bV_h, \\
\bigl(\div \bu_h,\varphi_h \bigr) &=\bigl(q_h,\varphi_h \bigr)
&&\qquad\forall \varphi_h \in M_h,  \label{e4.4}\\
\bigl(\nab\bu_h(0), \nab\bw_h\bigr) = \bigl(\nab\bu_0, \nab\bw_h\bigr),
\quad \langle \bu_h(0), \nu\rangle &=\langle \bu_0, \nu\rangle 
&&\qquad\forall \bw_h\in \bV_h, \label{e4.5a}\\
\bigl(q_{ht},\psi_h \bigr)_{\small\rm dual} + \kappa \bigl(\nab (\tp_h+\a q_h),
\nab\psi_h\bigr) &=0 &&\qquad\forall \psi_h \in W_h, \label{e4.2}\\
\bigl(q_h(0), \chi_h\bigr) &= \bigl(q_0, \chi_h\bigr) 
&&\qquad\forall \chi_h\in W_h. \label{e4.2b}
\end{alignat}
Where $q_{ht}$ denotes the time derivative of $q_h$.

\begin{remark}
(a) We note that \reff{e4.5a} and \reff{e4.2b} defines 
$\bu_h(0)=\cR^h \bu_0$ and $q_h(0)=\cQ^h q_0$ (see their
definitions below).
It is easy to see that in general $q_h(0)\neq \div \bu_h(0)$
although $q(0)=\div \bu(0)$. But it is not hard
to enforce $q_h(0)=\div \bu_h(0)$ by defining $\bu_h(0)$ slightly 
differently (in the case of continuous $M_h$) or simply substituting 
\reff{e4.2b} by $q_h(0):=\div \bu_h(0)$ (in the case of discontinuous
$M_h$), even such a modification is not necessary for the sake of convergence
(see Section \ref{sec-3.3}). We also note that $\bu_h(0)=\cR^h \bu_0$
can be replaced by the $L^2$ projection $\bu_h(0)=\cQ^h \bu_0$,
the only ``drawback" of using the $L^2$ projection is that it produces
a larger error constant.

(b) In the case that $M_h$ and/or $W_h$ are discontinuous spaces,
since $M_h\not\subset H^1(\Ome)$ and/or $W_h\not\subset H^1(\Ome)$,
then the second term on the left-hand side of \reff{e4.2} must 
be modified into a sum over all elements of the integrals defined 
on each element $K\in \cT_h$, and additional jump terms on element
edges may also need to be introduced to ensure convergence.
\end{remark}

We conclude this subsection by citing a few well-known 
facts about the finite element functions. First, 
we recall the following inverse inequality for polynomial functions
\cite{cia}:
\begin{align}\label{e4.0}
\|\nab \phi_h\|_{L^2(K)}\le c_0 h^{-1}\|\phi_h\|_{L^2(K)} 
\qquad\forall \phi_h\in P_r(K),\, K\in \cT_h. 
\end{align}

Second, for any $\phi\in L^2(\Ome)$, we define its $L^2$ 
projection $\cQ^h \phi\in W_h$ as follows:
\begin{align*} 
(\cQ^h \phi, \psi_h) &=(\phi,\psi_h) \qquad \psi_h\in W_h.
\end{align*}
It is well-known that the projection operator $\cQ^h:L^2(\Ome)\to W_h$ 
satisfies (cf. \cite{bs08}), for any $\phi\in H^s(\Omega)\, (s\geq 1)$
\begin{align}\label{e4.00}
\|\cQ^h \phi-\phi\|_{L^2}+h\|\nabla(\cQ^h \phi-\phi)\|_{L^2}\le
Ch^\ell\|\phi \|_{H^\ell}, \quad \ell=\min \{2, s\}.
\end{align}
We remark that in the case $W_h\not\subset H^1(\Ome)$, 
the second term on the left-hand side of the above 
inequality has to be replaced by the broken $H^1$-norm.

Finally, for any $\bv\in \bH^1(\Ome)$ we define its elliptic projection 
$\cR^h \bv\in \bV_h$ by
\begin{align*} 
\bigl(\nab\cR^h \bv, \nab\bw_h) &=(\nab\bv, \nab\bw_h) 
\qquad \bw_h\in \bV_h, \\
\langle \cR^h \bv, 1 \rangle &=\langle \bv, 1 \rangle. 
\end{align*}
Also, for any $\phi\in H^1(\Ome)$, we define its elliptic projection
$\cS^h \phi\in W_h$ by
\begin{align*}
\bigl(\nab\cS^h \phi, \nab\psi_h) &=(\nab\phi, \nab\psi_h) 
\qquad \psi_h\in W_h, \\
(\cS^h \phi, 1 ) &=( \phi, 1 ). 
\end{align*}
It is well-known that the projection operators $\cR^h:\bH^1(\Ome)\to 
\bV_h$ and $\cS^h: H^1(\Ome)\to W_h$ satisfy (cf. \cite{bs08,cia}), for any 
$\bv\in H^s(\Omega), \phi\in H^s(\Ome)\, (s\geq 1)$, $m=\min \{3, s\}$
and $\ell=\min \{2, s\}$
\begin{align}\label{e4.01}
h^{-1} \|\cR^h \bv-\bv\|_{H^{-1}}+ \|\cR^h \bv-\bv\|_{L^2}
+h\|\nabla(\cR^h \bv-\bv)\|_{L^2} \le Ch^m\|\bv \|_{H^m}.\\
h^{-1} \|\cS^h \phi-\phi\|_{H^{-1}}+ \|\cS^h \phi-\phi\|_{L^2}
+h\|\nabla(\cS^h \phi-\phi)\|_{L^2} \le Ch^\ell\|\phi \|_{H^\ell}.\label{e4.02}
\end{align}

\subsection{Stability and solvability of \reff{e4.3}--\reff{e4.2b}}\label{sec-3.2}
In this subsection, we shall prove that the semi-discrete 
solution $(\bu_h,\tp_h,q_h)$ defined in the previous subsection satisfies 
an energy law similar to \reff{e2.10}. In addition, they satisfy the 
same conservation laws as those enjoyed by their
continuous counterparts $(\bu,\tp,q)$ (see Lemma \ref{lem2.2}).
An immediate consequence of the stability and the conservation laws
is the well-posedness of problem \reff{e4.3}--\reff{e4.2b}.
Moreover, the stability also serves as a step stone for us 
to establish convergence results in the next subsection.

\begin{lemma}\label{lem3.1a}
Let $(\bu_h,\tp_h,q_h)$ be a solution of \reff{e4.3}--\reff{e4.2b}
and set $p_h:=\tp_h+\alpha q_h$. Then there holds the following energy law:
\begin{align}\label{eq3.12}
J_h(t)+\kappa \int_0^t \norm{\nab p_h(s)}{L^2}^2\, ds
=J_h(0) \qquad\forall t\in (0,T],
\end{align}
where
\[
J_h(t):= \frac12 \Bigl[ \beta \norm{\nab \bu_h(t)}{L^2}^2 +\alpha \norm{q_h(t)}{L^2}^2
      - 2\langle \bbf, \bu_h(t) \rangle_{\small\rm dual} \Bigr].
\]
\end{lemma}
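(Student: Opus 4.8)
The plan is to mimic the proof of the continuous energy law in Lemma \ref{lem2.1} (in its reformulated version \reff{e2.10}), while exploiting that, unlike in the continuous case, the test functions needed are automatically admissible in the semi-discrete setting. Since $\bu_h(t)\in\bV_h$, $\tp_h(t)\in M_h$ and $q_h(t)\in W_h$ for every $t$, and the semi-discrete system \reff{e4.3}--\reff{e4.2} reduces (after eliminating $(\bu_h,\tp_h)$ through the discrete Stokes solve) to a linear ODE for $q_h$ in the finite-dimensional space $W_h$, the solution is smooth in $t$; in particular $\bu_{ht}(t)\in\bV_h$ and $q_{ht}(t)\in W_h$. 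Consequently the mollification device employed in Lemma \ref{lem2.1} is unnecessary here, and $\bu_{ht}$ may be inserted directly as a test function.

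First I would compute $\frac{d}{dt}J_h(t)$ and eliminate the displacement terms. Taking $\bv_h=\bu_{ht}$ in \reff{e4.3} and using that $\bbf$ is independent of $t$ (so that $\langle\bbf,\bu_{ht}\rangle_{\small\rm dual}=\frac{d}{dt}\langle\bbf,\bu_h\rangle_{\small\rm dual}$) gives
\[
\beta\bigl(\nab\bu_h,\nab\bu_{ht}\bigr)-\langle\bbf,\bu_{ht}\rangle_{\small\rm dual}=\bigl(\tp_h,\div\bu_{ht}\bigr).
\]
Next I would differentiate \reff{e4.4} in $t$ (legitimate because its test functions are time-independent), obtaining $(\div\bu_{ht},\varphi_h)=(q_{ht},\varphi_h)$ for all $\varphi_h\in M_h$, and then choose $\varphi_h=\tp_h(t)\in M_h$ to get $(\tp_h,\div\bu_{ht})=(\tp_h,q_{ht})$. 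Inserting these into the definition of $J_h$ yields
\[
\frac{d}{dt}J_h(t)=\beta\bigl(\nab\bu_h,\nab\bu_{ht}\bigr)+\alpha\bigl(q_h,q_{ht}\bigr)-\langle\bbf,\bu_{ht}\rangle_{\small\rm dual}=\bigl(\tp_h+\alpha q_h,q_{ht}\bigr)=\bigl(p_h,q_{ht}\bigr).
\]

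Then I would test the diffusion equation \reff{e4.2} with $\psi_h=p_h=\tp_h+\alpha q_h$. Since $W_h\supset M_h$ we have $p_h\in W_h$, so this choice is admissible; moreover, because $q_{ht}\in W_h\subset L^2(\Ome)$ and (for the continuous choice $W_h=M_h$ adopted here) $p_h\in H^1(\Ome)$, the duality pairing collapses to the $L^2$ inner product, $(q_{ht},p_h)_{\small\rm dual}=(q_{ht},p_h)$. Equation \reff{e4.2} then reads $(q_{ht},p_h)+\kappa\norm{\nab p_h}{L^2}^2=0$, and combining it with the previous display produces the pointwise-in-time identity $\frac{d}{dt}J_h(t)+\kappa\norm{\nab p_h(t)}{L^2}^2=0$. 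Integrating over $(0,t)$ gives the asserted energy law \reff{eq3.12}.

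The cancellations and substitutions above are routine. I expect the only points genuinely requiring care to be: (i) the justification that $\bu_{ht}$ and $q_{ht}$ are legitimate test functions, which rests on the finite dimensionality and the resulting smoothness in $t$ of the semi-discrete solution and is precisely what removes the need for the mollification argument of Lemma \ref{lem2.1}; and (ii) the reduction of $(q_{ht},p_h)_{\small\rm dual}$ to the $L^2$ inner product, which is immediate for the continuous Taylor--Hood/$P_1$ pair but would, for a discontinuous $W_h$, have to be reconciled with the broken-norm and jump modification of \reff{e4.2} flagged in the preceding remark.
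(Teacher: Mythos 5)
Your proof is correct and follows essentially the same route as the paper's: the identical three test-function choices ($\bv_h=\bu_{ht}$ in \reff{e4.3}, $\varphi_h=\tp_h$ after differentiating \reff{e4.4} in $t$, and $\psi_h=p_h=\tp_h+\alpha q_h$ in \reff{e4.2}), the same cancellations, and integration in time. The only difference is at the margin: the paper retains the mollification fallback of Lemma \ref{lem2.1} for the case $\bu_{ht}\notin L^2((0,T);\bH^1(\Ome))$, whereas you discharge it by appealing to the finite-dimensional linear ODE structure of the semi-discrete system, which is a legitimate simplification here.
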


\begin{proof}
If $\bu_{ht}\in L^2((0,T);\bH^1(\Ome))$, then \reff{eq3.12} follows immediately
from setting $\bv_h=\bu_{ht}$ in \reff{e4.3},  $\varphi_h=\tp_h$ after
differentiating \reff{e4.4} with respect to $t$,  $\psi_h=p_h$,
and adding the resulted equations. On the other hand, 
if $\bu_{ht}\not\in L^2((0,T);H^1(\Ome))$, then $\bv_h=\bu_{ht}$
is not a valid test function. This technical difficulty can be
easily overcome by smoothing $\bu_h$ in $t$ through 
a symmetric mollifier as described in the proof of Lemma \ref{lem2.1}.
\end{proof}

\begin{lemma}\label{lem3.2a}
Every solution $(\bu_h,\tp_h,q_h)$ of \eqref{e4.3}--\eqref{e4.2b} 
satisfies the following conservation laws:
\begin{align}\label{eq3.13}
\bigl(q_h, 1\bigr) =C_q,  \quad
\langle \bu_h, \nu\rangle = C_{\bu},  \quad
\bigl(\tp_h, 1\bigr) =C_{\tp}, \quad 
\bigl(p_h, 1\bigr) =C_p.
\end{align}
\end{lemma}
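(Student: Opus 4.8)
The plan is to establish each of the four conservation laws in \reff{eq3.13} by choosing appropriate test functions in the semi-discrete scheme \reff{e4.3}--\reff{e4.2b}, mirroring the proof of the continuous conservation laws in Lemma \ref{lem2.2}. The crucial preliminary observation is that since $W_h \supset M_h$, the constant function $1$ lies in both $M_h$ and $W_h$ (as the finite element spaces contain constants), so constant test functions are admissible in \reff{e4.4} and \reff{e4.2}. I would carry out the four identities in the natural dependency order: first $\bigl(q_h,1\bigr)=C_q$, then $\langle \bu_h,\nu\rangle=C_\bu$, then $\bigl(\tp_h,1\bigr)=C_{\tp}$, and finally $\bigl(p_h,1\bigr)=C_p$ as an immediate consequence.

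For the first identity, I would take $\psi_h\equiv 1$ in \reff{e4.2}. Since $\nab 1 = 0$, the second term vanishes and we obtain $\bigl(q_{ht},1\bigr)_{\small\rm dual}=0$, so $\frac{d}{dt}\bigl(q_h(t),1\bigr)=0$ and thus $\bigl(q_h(t),1\bigr)$ is constant in time. It remains to identify this constant with $C_q$ using the initial condition: taking $\chi_h\equiv 1$ in \reff{e4.2b} gives $\bigl(q_h(0),1\bigr)=\bigl(q_0,1\bigr)=\bigl(\div\bu_0,1\bigr)=C_q$ by \reff{e1.14}. For the second identity, I would take $\varphi_h\equiv 1$ in \reff{e4.4} to get $\bigl(\div\bu_h,1\bigr)=\bigl(q_h,1\bigr)=C_q$, and then apply the divergence theorem: $\langle\bu_h,\nu\rangle=\bigl(\div\bu_h,1\bigr)=C_q=C_\bu$, recalling from \reff{e1.16} that $C_\bu=C_q$.

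For the third identity, I would follow the continuous proof exactly by taking the test function $\bv_h=\bx$ in \reff{e4.3}, which is a valid choice since $\bx\in\bH^1(\Ome)$ and, being a vector of degree-one polynomials, lies in the Taylor-Hood space $\bV_h$ (this is where the assumption $\bP_2\subset\bV_h$, or at least $\bP_1$, is used). Using $\nab\bx=I$ and $\div\bx=d$, equation \reff{e4.3} becomes $\beta\bigl(\nab\bu_h,I\bigr)-\bigl(\tp_h,d\bigr)=\langle\bbf,\bx\rangle_{\small\rm dual}$, so that $\beta\bigl(\div\bu_h,1\bigr)-d\bigl(\tp_h,1\bigr)=\langle\bbf,\bx\rangle_{\small\rm dual}$. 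Substituting $\bigl(\div\bu_h,1\bigr)=C_q$ from the second step and solving yields $\bigl(\tp_h,1\bigr)=\frac{\beta C_q}{d}-\frac{1}{d}\langle\bbf,\bx\rangle_{\small\rm dual}=C_{\tp}$ by the definition \reff{e1.18}. Finally, since $p_h:=\tp_h+\alpha q_h$, linearity gives $\bigl(p_h,1\bigr)=\bigl(\tp_h,1\bigr)+\alpha\bigl(q_h,1\bigr)=C_{\tp}+\alpha C_q=C_p$ by \reff{e1.15}.

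I do not anticipate a genuine obstacle here, as the argument is a direct discrete transcription of Lemma \ref{lem2.2}; the only points requiring care are verifying the admissibility of the chosen test functions in the discrete spaces. Specifically, I must confirm that constants belong to $M_h$ and $W_h$ (true since $1\in\bP_1\subset M_h$ and $M_h\subset W_h$) and that $\bx\in\bV_h$ (true for Taylor-Hood since linear vector fields are contained in the quadratic space). These are the places where the structure of the finite element spaces enters, and they are precisely what make the discrete conservation laws hold exactly rather than only approximately.
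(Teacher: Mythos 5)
Your proof is correct and follows essentially the same route as the paper: $\psi_h\equiv 1$ in \reff{e4.2} together with $\chi_h\equiv 1$ in \reff{e4.2b} for the first law, $\varphi_h\equiv 1$ in \reff{e4.4} for the second, $\bv_h=\bx$ in \reff{e4.3} for the third, and the definition $p_h=\tp_h+\alpha q_h$ for the fourth. Your explicit checks that $1\in M_h\cap W_h$ and $\bx\in \bV_h$ for the Taylor--Hood pair are details the paper leaves implicit, and they are verified correctly.
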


\begin{proof}
\reff{eq3.13}$_1$ follows from setting $\psi_h=1$ in
\reff{e4.2} and $\chi_h=1$ is \reff{e4.2b}.  \reff{eq3.13}$_2$ 
follows from taking $\varphi_h= 1$ in \reff{e4.4}. 
\reff{eq3.13}$_3$ derives from letting $\bv_h=x$ in 
\reff{e4.3}. Finally, \reff{eq3.13}$_4$ follows
from \reff{eq3.13}$_1$, \reff{eq3.13}$_3$, and 
the definition of $p_h$.
\end{proof}

\medskip
With the help of the above two lemmas, we can prove the
solvability of problem \eqref{e4.3}--\eqref{e4.2b}.

\begin{theorem}\label{thm3.1a}
Suppose $(\bu_0, \bbf) \in \bH^1(\Ome) \times \bH^{-\frac12}(\p\Omega)$,
and $\langle \bbf, 1 \rangle_{\small\rm dual} =0$.
then \eqref{e4.3}--\eqref{e4.2b} has a unique solution.
\end{theorem}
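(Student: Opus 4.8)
The plan is to exploit the linearity and the (spatial) finite dimensionality of \reff{e4.3}--\reff{e4.2b} by reducing the coupled system to a single linear ODE system for the coordinate vector of $q_h$, for which existence and uniqueness are classical. The first step is to solve the discrete generalized Stokes sub-problem \reff{e4.3}--\reff{e4.4} for $(\bu_h,\tp_h)$ in terms of a given $q_h\in W_h$. Taking the homogeneous case ($\bbf=0$, $q_h=0$) and choosing $\bv_h=\bu_h$ in \reff{e4.3} and $\varphi_h=\tp_h$ in \reff{e4.4} yields $\beta\norm{\nab\bu_h}{L^2}^2=0$, so $\nab\bu_h=0$ and $\bu_h$ is a constant vector; feeding this back into \reff{e4.3} gives $(\tp_h,\div\bv_h)=0$ for all $\bv_h\in\bV_h$. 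Splitting $\tp_h$ into its mean and its mean-zero part, the discrete inf-sup condition \reff{e4.1} forces the mean-zero part to vanish, and testing against a $\bv_h\in\bV_h$ with $\langle\bv_h,\nu\rangle\neq0$ forces the mean to vanish as well; hence $\tp_h=0$. Consequently, for each $q_h$ the Stokes solve determines $\tp_h$ uniquely and $\bu_h$ uniquely up to an additive constant vector, and the resulting map $q_h\mapsto(\bu_h,\tp_h)$ is affine (its inhomogeneous part coming from $\bbf$).

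Next I would substitute $\tp_h=\tp_h(q_h)$ into the evolution equation \reff{e4.2}. With the choice $W_h=M_h\subset H^1(\Ome)$, the pairing $(q_{ht},\psi_h)_{\small\rm dual}$ reduces to the $L^2$ inner product, whose mass matrix is symmetric positive definite and hence invertible; therefore \reff{e4.2} becomes a linear ODE system $\dot{\mathbf q}=A\mathbf q+\mathbf b$ for the coordinate vector $\mathbf q$ of $q_h$, with constant $A$ and $\mathbf b$ (assembled from the Stokes solution operator and the time-independent $\bbf$). Standard linear ODE theory then delivers a unique global solution on $[0,T]$ for the initial value $q_h(0)=\cQ^h q_0$ fixed by \reff{e4.2b}. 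With $q_h$ in hand, $\tp_h$ is recovered uniquely from the Stokes solve, while $\bu_h$ is recovered up to the constant-vector ambiguity; the initial datum \reff{e4.5a}, i.e.\ $\bu_h(0)=\cR^h\bu_0$, together with the normalization underlying $\cR^h$ (equivalently the conservation law for $\bu_h$ in Lemma \ref{lem3.2a}), selects the representative, completing the construction.

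For uniqueness I would argue directly from the available a priori estimates: the difference of two solutions solves the homogeneous problem with vanishing initial data, so $J_h(0)=0$, and the energy law \reff{eq3.12} of Lemma \ref{lem3.1a} gives $J_h(t)+\kappa\int_0^t\norm{\nab p_h(s)}{L^2}^2\,ds=0$ for all $t$. Since every term is nonnegative, this forces $\nab\bu_h\equiv0$ and $q_h\equiv0$; the inf-sup argument of the first paragraph then yields $\tp_h\equiv0$, and the conservation laws of Lemma \ref{lem3.2a} remove the remaining constant vector in $\bu_h$, so the two solutions coincide.

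The step I expect to be the main obstacle is the well-posedness of the discrete generalized Stokes solution operator $q_h\mapsto(\bu_h,\tp_h)$, and in particular the uniqueness of the pressure $\tp_h$ under the pure-traction boundary condition naturally built into \reff{e4.3}. This is exactly where the discrete inf-sup condition \reff{e4.1} and the compatibility condition $\langle\bbf,1\rangle_{\small\rm dual}=0$ enter, and where one must carefully separate the genuinely undetermined constant-vector part of $\bu_h$ (harmless, and removed by the normalization/conservation laws) from the fully determined quantities $\tp_h$ and $q_h$. Once this Stokes well-posedness is secured, the reduction to a linear ODE and the invocation of standard existence--uniqueness theory are entirely routine.
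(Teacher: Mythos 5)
Your proposal is correct and follows essentially the same route as the paper: existence by rewriting \reff{e4.3}--\reff{e4.2b} as an initial value problem for a linear ODE system (you merely spell out the Stokes-elimination step, via the discrete inf-sup condition \reff{e4.1}, that the paper leaves implicit), and uniqueness by combining the discrete energy law of Lemma \ref{lem3.1a} with the conservation laws of Lemma \ref{lem3.2a} to force the homogeneous solution to vanish. Your explicit identification of the constant-vector ambiguity in $\bu_h$ and its removal by the normalization in \reff{e4.5a} together with the conserved quantities matches the paper's own treatment (which invokes $C_{\bu}=C_q=C_{\tp}=0$ and the $H^1$-stability of $\cR^h$), so no substantive difference remains.
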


\begin{proof}
Since \eqref{e4.3}--\eqref{e4.2b} can be written as an initial value
problem for a system of linear ODEs, the existence of solutions
follows immediately from the linear ODE theory. 

To show the uniqueness, it suffices to prove that 
the problem only has the trivial solution if $\bbf=\bu_0=0$.  
For the zero sources, the energy law and the $H^1$-norm stability 
of the elliptic projection,
\[
\norm{\nab\bu_h(0)}{L^2}=\norm{\nab\cR^h \bu(0)}{L^2} 
\leq \norm{\nab\bu(0)}{L^2},
\]
immediately imply that $\bu,\tp$, and $q$ are constant functions. Since
$C_{\bu}=C_q=C_{\tp}=0$ as $\bbf=\bu_0=0$, hence, 
$\bu,\tp$, and $q$ must identically equal zero. The proof is complete. 
\end{proof}

By exploiting the linearity of equations \reff{e4.3}--\reff{e4.2b},
we can prove certain energy estimates for the time derivatives \
of the solution $(\bu_h, \tp_h, q_h)$. 

\begin{theorem}\label{regularity_1} 
The solution $(\bu_h, \tp_h, q_h)$ of the semi-discrete method
and $p_h:=\tp_h+\alpha q_h$ satisfy the following estimates:
\begin{align}\label{eq3.14}
\norm{(q_h)_t}{L^2(H^{-1})} &\leq J_h(0)^{\frac12}. \\
\sqrt{\beta} \norm{\sqrt{t} (\nab \bu_h)_t}{L^2(L^2)}
+\sqrt{\alpha} \norm{\sqrt{t} (q_h)_t}{L^2(L^2)}
+\sqrt{\kappa} \norm{\sqrt{t} \nab p_h}{L^\infty(L^2)} 
&\leq J_h(0)^{\frac12}. \label{eq3.15}  \\
\sqrt{\beta} \norm{t (\nab \bu_h)_t}{L^\infty(L^2)}
+\sqrt{\alpha} \norm{t (q_h)_t}{L^\infty(L^2)}
+\sqrt{\kappa} \norm{t (\nab p_h)_t}{L^2(L^2)} 
&\leq [2J_h(0)]^{\frac12}. \label{eq3.16} \\
\norm{t(q_h)_{tt}}{L^2(H^{-1})} &\leq [2J_h(0)]^{\frac12}.  \label{eq3.17} 
\end{align}

\end{theorem}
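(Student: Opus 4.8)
The plan is to mirror the continuous regularity argument of Theorem \ref{regularity} at the discrete level, since the semi-discrete equations \reff{e4.3}--\reff{e4.2b} share the same algebraic structure as \reff{e2.4}--\reff{e2.7}. First I would differentiate \reff{e4.3} and \reff{e4.4} with respect to $t$ (note $\bbf$ is $t$-independent), obtaining the discrete analogues
\begin{alignat*}{2}
\beta \bigl( \nab (\bu_h)_t, \nab\bv_h \bigr)-\bigl( (\tp_h)_t, \div \bv_h \bigr)
&= 0 &&\quad\forall \bv_h\in \bV_h, \\
\bigl(\div (\bu_h)_t, \varphi_h \bigr) &= \bigl((q_h)_t, \varphi_h \bigr)
&&\quad\forall \varphi_h \in M_h.
\end{alignat*}
The first estimate \reff{eq3.14} is the simplest: it follows directly from the discrete energy law \reff{eq3.12} together with \reff{e4.2}, exactly as \reff{e2.10a} follows from \reff{e2.10} in the continuous case. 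Specifically, testing \reff{e4.2} with a discrete $H^{-1}$-dual pairing and using $\norm{\nab p_h}{L^2(L^2)}^2 \leq \kappa^{-1} J_h(0)$ from \reff{eq3.12} gives the bound on $\norm{(q_h)_t}{L^2(H^{-1})}$.

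For \reff{eq3.15}, I would test the differentiated equations with the discrete weighted multipliers $\bv_h = t(\bu_h)_t$, $\varphi_h = t(\tp_h)_t$, and $\psi_h = t(p_h)_t = t(\tp_h+\alpha q_h)_t$ in \reff{e4.2}, then add. The cross terms involving $\div$ cancel just as in the continuous derivation, producing the discrete identity
\begin{align*}
t\beta\norm{\nab (\bu_h)_t}{L^2}^2 + t\alpha \norm{(q_h)_t}{L^2}^2
+\frac{\kappa}2 \frac{d}{dt}\Bigl(t\norm{\nab p_h}{L^2}^2\Bigr)
= \frac{\kappa}2 \norm{\nab p_h}{L^2}^2,
\end{align*}
and integrating over $(0,T)$ yields \reff{eq3.15} after absorbing the right-hand side via \reff{eq3.12}. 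Similarly, \reff{eq3.16} comes from choosing $\bv_h = t^2(\bu_h)_{tt}$, differentiating \reff{e4.4} once more so that $\varphi_h = t^2(\tp_h)_t$ is admissible, and $\psi_h = t^2(p_h)_t$ in \reff{e4.2}, giving the discrete analogue of the second displayed identity in the proof of Theorem \ref{regularity}; integrating and invoking \reff{eq3.15} closes the bound. Finally \reff{eq3.17} is a direct consequence of the second differentiated equation \reff{e4.2} (the discrete version of \reff{e2.14}) together with \reff{eq3.15} and \reff{eq3.16}, paralleling how \reff{e2.11b} drops out of \reff{e2.14}.

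The main obstacle I anticipate is justifying that the weighted time-derivatives used as test functions are legitimate, i.e.\ that $(\bu_h)_t$ and $(\bu_h)_{tt}$ have enough temporal regularity to be inserted into \reff{e4.3}. Since \reff{e4.3}--\reff{e4.2b} is a linear ODE system (by Theorem \ref{thm3.1a}), the finite-dimensional solution is in fact smooth in $t$, so $(\bu_h)_t, (q_h)_t \in C^\infty((0,T]; \bV_h\times W_h)$ and no mollification is needed here — this is a genuine simplification over the continuous case. The only subtlety is that these higher derivatives blow up as $t\to 0^+$, which is precisely why each estimate carries the weight $\sqrt{t}$ or $t$; one must verify that the weighted quantities remain integrable near $t=0$, which they do because the energy law \reff{eq3.12} controls $\norm{\nab p_h}{L^2(L^2)}$ uniformly and the weights vanish at the origin. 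Tracking the dependence of all constants on $J_h(0)$ rather than on the continuous $J(0)$ is routine but must be done carefully to match the stated right-hand sides.
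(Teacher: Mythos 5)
Your proposal is correct and follows exactly the route the paper intends: the paper omits this proof outright, stating that \eqref{eq3.14}--\eqref{eq3.17} ``follow exactly the same lines'' as the continuous estimates of Theorem \ref{regularity}, and you have carried out precisely that program --- differentiating \eqref{e4.3}--\eqref{e4.4} (and, for \eqref{eq3.16}--\eqref{eq3.17}, also \eqref{e4.2}) in time and testing with the weighted functions $t(\bu_h)_t$, $t(\tp_h)_t$, $t(p_h)_t$, respectively $t^2(\bu_h)_{tt}$, $t^2(\tp_h)_t$, $t^2(p_h)_t$, all of which are admissible since $W_h=M_h$ so that $(p_h)_t\in W_h$. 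Your added observation that the finite-dimensional linear ODE structure makes the solution smooth in $t$, rendering the mollification step of Lemma \ref{lem2.1} unnecessary, is a genuine (if small) simplification over the continuous argument and is exactly right.
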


Because the proofs of \reff{eq3.14}--\reff{eq3.17} follow exactly the
same lines of the proofs of their differential counterparts 
given in Theorem \ref{regularity}, we omit them.

\subsection{Convergence analysis}\label{sec-3.3}

Define the error functions
\[
\bE_{\bu}(t):=\bu(t)-\bu_h(t),\quad
E_{\tp}(t):=\tp(t)-\tp_h(t),\quad
E_q(t):=q(t)-q_h(t),
\]
and 
\[
p(t):=\tp(t)+\alpha q(t),\quad
p_h(t):=\tp_h(t)+\alpha q_h(t),\quad
E_p(t):= p(t)-p_h(t).
\]
Trivially, $ E_p(t)=E_{\tp}(t)+\alpha E_q(t)$.  

Subtracting each of \reff{e4.3}--\reff{e4.2b} from their
respective counterparts in \reff{e2.4}--\reff{e2.7} we obtain 
the following error equations:
\begin{alignat}{2} \label{e4.10}
\beta\bigl( \nab \bE_{\bu},\nab \bv_h\bigr)- \bigl( E_{\tp},\div \bv_h\bigr)
&=0 &&\qquad\forall \bv_h\in \bV_h, \\
\bigl(\div \bE_{\bu},\varphi_h \bigr) &=\bigl(E_q,\varphi_h \bigr)
&&\qquad\forall \varphi_h \in M_h,  \label{e4.11}\\
\bu_h(0) &=\cR^h \bu(0), &&\qquad \label{e4.11a} \\
\bigl((E_q)_t,\psi_h \bigr)_{\small\rm dual} 
+\kappa \bigl(\nab (E_{\tp} +\a E_q), \nab\psi_h\bigr)
&=0 &&\qquad\forall \psi_h \in W_h, \label{e4.12}\\
q_h(0) &= \cQ^h q(0). &&\qquad \label{e4.12b}
\end{alignat}

Introduce the following decomposition of error functions
\begin{alignat*}{3}
&\bE_{\bu}=\mathbf{\Lam}_{\bu} + \mathbf{\Theta}_{\bu}, 
&&\quad \bLamu:=\bu-\cR^h\bu, &&\quad \bThetau:=\cR^h\bu -\bu_h,\\
&E_{\tp}=\Lam_{\tp} + \Theta_{\tp}, 
&&\quad \Lam_{\tp}:=\tp-\cQ^h\tp, &&\quad \Theta_{\tp}:=\cQ^h\tp -\tp_h,\\
&E_q=\Lam_q + \Theta_q, 
&&\quad \Lam_q:=q-\cQ^h q, &&\quad \Theta_q:=\cQ^h q -q_h,\\
&E_p=\Lam_p + \Theta_p, 
&&\quad \Lam_p:=p-\cQ^h p, &&\quad \Theta_p:=\cQ^h p -p_h,\\
&E_p=\Psi_p + \Phi_p, 
&&\quad \Psi_p:=p-\cS^h p, &&\quad \Phi_p:=\cS^h p -p_h,\\
\end{alignat*}
Trivially, $\Phi_p= \Lam_p-\Psi_p + \Theta_p, 
\Theta_p=\Theta_{\tp}+\alpha\Theta_q$.  

By the definition of $\cR^h$ and $\cQ^h$, the above error equations can be
written as
\begin{alignat}{2} \label{eq3.19}
\beta\bigl( \nab \bThetau,\nab \bv_h\bigr)- \bigl(\Theta_{\tp},\div \bv_h\bigr)
&=\bigl(\Lam_{\tp},\div \bv_h\bigr) &&\qquad\forall \bv_h\in \bV_h, \\
\bigl(\div \bThetau,\varphi_h \bigr) &=\bigl(\Theta_q,\varphi_h \bigr) 
- \bigl(\div \bLamu,\varphi_h \bigr) 
&&\qquad\forall \varphi_h \in M_h,  \label{eq3.20}\\
\bThetau(0) &= 0 &&\qquad \label{eq3.21}\\
\bigl((\Theta_q)_t,\psi_h \bigr)_{\small\rm dual} 
+\kappa \bigl(\nab \Phi_p, \nab\psi_h\bigr)
&=0 &&\qquad\forall \psi_h \in W_h, \label{eq3.22}\\
\Theta_q(0) &=0. &&\qquad \label{eq3.23}
\end{alignat}
So $\bThetau, \Theta_p, \Theta_{\tp}$ and $\Theta_q$ satisfy 
the same type of equations as $\bu_h,\tp_h, p_h$ and $q_h$ do,
except the terms containing $\bLamu$ and $\Lam_{\tp}$ 
on the right-hand sides of the equations. Hence, 
$\bThetau, \Theta_p, \Theta_{\tp}$ and $\Theta_q$ are expected
to satisfy an equality similar to the energy law \reff{eq3.12}.

To the end, taking $\bv_h=(\bThetau)_t,\varphi_h=\Theta_{\tp}$ 
(after differentiating \reff{eq3.20} with respect to $t$),
$\psi_h=\Phi_p=  \Lam_p-\Psi_p +\Theta_{\tp}+\alpha\Theta_q$
in \reff{eq3.19}--\reff{eq3.23}, and adding 
the resulted equations we obtain 
\begin{align}\label{e4.13}
&\frac12 \frac{d}{dt}\bigl[ \beta \|\nab \bThetau\|_{L^2}^2
+\alpha \|\Theta_q\|_{L^2}^2\bigr]
+\kappa \|\nab \Phi_p\|_{L^2}^2   \\
&\hskip 0.5in
=\bigl(\Lam_{\tp}, \div (\bThetau)_t \bigr) 
-\bigl(\div (\bLamu)_t, \Theta_{\tp} \bigr)
+\bigl((\Theta_q)_t, \Psi_p-\Lam_p \bigr) \no  \\
&\hskip 0.5in
= \frac{d}{dt} \Big[ \bigl(\Lam_{\tp}, \div \bThetau \bigr) 
+\bigl(\Theta_q, \Psi_p-\Lam_p \bigr) \Bigr]
- \bigl((\Lam_{\tp})_t, \div \bThetau \bigr)  \no \\
&\hskip 1.9in
-\bigl(\Theta_q, (\Psi_p-\Lam_p)_t \bigr)
-\bigl(\div(\bLamu)_t, \Theta_{\tp} \bigr). \no
\end{align}

Integrating in $t$ and using Schwarz inequality we get
\begin{align*}
&\frac12 \bigl[ \beta \|\nab \bThetau(t)\|_{L^2}^2
+\alpha \|\Theta_q(t)\|_{L^2}^2\bigr]
+\kappa \int_0^t \|\nab \Phi_p(s)\|_{L^2}^2\, ds   \\
&\quad 
= 
\bigl(\Lam_{\tp}(t), \div \bThetau(t) \bigr) 
+\bigl(\Theta_q(t), \Psi_p(t)-\Lam_p(t) \bigr)
-\int_0^t \bigl(\Theta_q, (\Psi_p(s)-\Lam_p(s))_t \bigr)\, ds \\
&\qquad
-\int_0^t \Bigl[ \bigl((\Lam_{\tp}(s))_s, \div \bThetau(s) \bigr) 
+\bigl(\div(\bLamu(s))_s, \Theta_{\tp}(t) \bigr) \Bigr]\, ds\\
&\quad
\leq 
\frac{\beta}4 \norm{\nab \bThetau(t)}{L^2}^2 
+\frac{1}{\beta} \norm{\Lam_{\tp}(t)}{L^2}^2 
+\frac{\alpha}4 \|\Theta_q(t)\|_{L^2}^2 \\
&\qquad
+\frac{1}{\alpha} \bigl[\norm{\Psi_p(t)}{L^2}^2+\norm{\Lam_p(t)}{L^2}^2\bigr] 
+\frac{\epsilon_0 \kappa}2 \int_0^t \|\Phi_p(s)\|_{L^2}^2\, ds \\
&\qquad
+\int_0^t \bigl[ \beta \norm{\nab\bThetau(s)}{L^2}^2 
+\alpha \norm{\Theta_q(s)}{L^2}^2 \bigr]\,ds
+\frac{1}{\alpha} \int_0^t \bigl[\norm{(\Psi_p(s)
-\Lam_p(s))_s}{L^2}^2\bigr]\, ds \\
&\qquad
+ C\int_0^t \Bigl[ \frac{1}{\beta} \norm{(\Lam_{\tp}(s))_s}{L^2}^2 
+\bigl(\alpha+\frac{1}{\epsilon_0\kappa}\bigr) \norm{\div(\bLamu(s))_s}{L^2}^2
\Bigr]\,ds.
\end{align*}
Where we have used the facts that $\bThetau(0)=0$, $\Theta_q(0)=0$, and 
$\Theta_{\tp}=\Theta_p-\alpha\Theta_q =\Psi_p-\Lam_p+\Phi_p-\alpha\Theta_q$
to get the second inequality. $\epsilon_0$ is a positive
constant to be chosen later.
Since $(\Phi_p,1)=0$, using Poinca\'e's inequality 
\[
\norm{\Phi_p}{L^2} \leq c_1 \norm{ \nab \Phi_p}{L^2},
\]
and choosing $\epsilon=c_1^{-1}$ the above inequality can be written as
\begin{align}\label{eq3.24}
\beta \|\nab \bThetau(t)\|_{L^2}^2 &+\alpha \|\Theta_q(t)\|_{L^2}^2
+\kappa \int_0^t \|\nab \Phi_p(s)\|_{L^2}^2\, ds  \\
&\leq \int_0^t \bigl[ \beta \norm{\nab\bThetau(t)}{L^2}^2 
+\alpha \norm{\Theta_q(t)}{L^2}^2 \bigr]\,ds + \mathcal{A}(t;\bu,\tp,p,q), \no
\end{align}
where
\begin{align*}
\mathcal{A}(t;\bu,\tp,p,q) &:= 
\frac{4}{\beta} \norm{\Lam_{\tp}(t)}{L^2}^2 
+\frac{1}{\alpha} \bigl[\norm{\Psi_p(t)}{L^2}^2+\norm{\Lam_p(t)}{L^2}^2\bigr]\\
&\qquad
+C\int_0^t \Bigl[\frac{1}{\beta} \norm{(\Lam_{\tp}(s))_s}{L^2}^2 
+\bigl(\alpha+\frac{c_1}{\kappa}\bigr) \norm{\div(\bLamu(s))_s}{L^2}^2 \Bigr]\,ds\\
&\qquad
+\frac{4}{\alpha} \int_0^t \bigl[\norm{(\Psi_p(s))_s}{L^2}^2 
+\norm{(\Lam_p(s))_s}{L^2}^2 \bigr] \, ds. 
\end{align*}

From \reff{e4.00}--\reff{e4.02} we deduce
\begin{align}\label{eq3.26}
\mathcal{A}(t;\bu,\tp,p,q) \leq C_1(T;\bu,\tp,p,q)\, h^2, \\
\mathcal{A}(t;\bu,\tp,p,q) \leq C_2(T;\bu,\tp,p,q)\, h^4,\label{eq3.26a}
\end{align}
where
\begin{align*}
C_1(T;\bu,\tp,p,q) &:=C\bigl[ 
\beta^{-1} \norm{\tp}{L^\infty(H^1)}^2 + \beta^{-1} \norm{\tp_t}{L^2(H^1)}^2 \\
&\qquad +(\alpha + c_1\kappa^{-1}) \norm{(\div\bu)_t}{L^2(H^1)}^2
+ \alpha^{-1} \bigl[ \norm{p}{L^\infty(H^1)}^2 + \norm{p_t}{L^2(H^1)}^2 \bigr],\\
C_2(T;\bu,\tp,p,q) &:=C\bigl[ 
\beta^{-1} \norm{\tp}{L^\infty(H^2)}^2 + \beta^{-1} \norm{\tp_t}{L^2(H^2)}^2 \\
&\qquad +(\alpha +c_1 \kappa^{-1}) \norm{(\div\bu)_t}{L^2(H^2)}^2
+ \alpha^{-1} \bigl[ \norm{p}{L^\infty(H^2)}^2 + \norm{p_t}{L^2(H^2)}^2 \bigr].
\end{align*}

It follows from an application of the Gronwall's lemma to \reff{eq3.24} that
\begin{align}\label{eq3.27}
\max_{0\leq t\leq T} \bigl[ \beta \|\nab \bThetau(t)\|_{L^2}^2 
+\alpha \|\Theta_q(t)\|_{L^2}^2 \bigr]
+\kappa \int_0^T \|\nab \Phi_p(s)\|_{L^2}^2\, ds  
\leq \mathcal{A}(T;\bu,\tp,q)\,e^T.
\end{align}

An application of the triangle inequality yields the following 
main theorem of this section.

\begin{theorem}\label{theorem3.1}
Assume $\bu_0\in \bH^1(\Ome)$. Let
\begin{align*}
\widehat{C}_1(T;\bu,p,q) &:= C\bigl[ \beta \norm{\bu}{L^\infty(H^2)}^2 
   + \alpha \norm{q}{L^\infty(H^1)}^2 + \kappa  \norm{p}{L^2(H^2)}^2 \bigr],\\
\widehat{C}_2(T;\bu,q) &:= C\bigl[ \beta \norm{\bu}{L^\infty(H^3)}^2 
   + \alpha \norm{q}{L^\infty(H^2)}^2 \bigr].
\end{align*}
Suppose that $C_1(T;\bu,\tp,p,q) < \infty$ and $\widehat{C}_1(T;\bu,p,q)<\infty$, 
then there holds error estimate
\begin{align} \label{e4.18}
&\max_{0\leq t\leq T} \bigl[ \sqrt{\beta}\|\nab(\bu(t)-\bu_h(t))\|_{L^2}
+\sqrt{\alpha}\|q(t)-q_h(t)\|_{L^2} \bigr] \\
&\qquad
+\Bigl( \kappa \int^T_0\|\nab(p-p_h)(t)\|_{L^2}^2\,dt\Bigr)^{\frac12} 
\leq \bigl[ C_1(T;\bu,\tp,p,q)^{\frac12}\,e^{\frac{T}{2}}
+\widehat{C}_1(T;\bu,p,q)^{\frac12} \bigr] h, \no
\end{align}
Moreover, if $C_2(T;\bu,\tp,p,q) < \infty$ and $\widehat{C}_2(T;\bu,q)<\infty$,
then there also holds
\begin{align} \label{e4.19}
&\max_{0\leq t\leq T} \bigl[ \sqrt{\beta}\|\nab(\bu(t)-\bu_h(t))\|_{L^2}
+\sqrt{\alpha}\|q(t)-q_h(t)\|_{L^2} \bigr] \\
&\hskip 2.2in
\leq \bigl[ C_2(T;\bu,\tp,p,q)^{\frac12}\,e^{\frac{T}{2}}
+\widehat{C}_2(T;\bu,q)^{\frac12} \bigr] h^2, \no
\end{align}

\end{theorem}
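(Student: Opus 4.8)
The plan is to obtain \reff{e4.18} and \reff{e4.19} purely by assembling the pieces already in hand: the weighted energy bound \reff{eq3.27} for the genuinely discrete error components $\bThetau, \Theta_q, \Phi_p$, the explicit $O(h^2)$ and $O(h^4)$ bounds \reff{eq3.26} and \reff{eq3.26a} on $\mathcal{A}(T;\bu,\tp,p,q)$, and the projection estimates \reff{e4.00}--\reff{e4.02} for the interpolation components $\bLamu, \Lam_q, \Psi_p$. Since the dissipative/Gronwall analysis leading to \reff{eq3.27} has already been carried out, and since the left-hand side of \reff{eq3.27} carries exactly the $\beta$-, $\alpha$-, $\kappa$-weights appearing in the theorem, the remaining work is essentially a weighted triangle inequality with careful accounting of the powers of $h$ and the required Sobolev regularity.

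First I would treat the displacement and volume-change terms. Writing $\bu-\bu_h=\bLamu+\bThetau$ and $q-q_h=\Lam_q+\Theta_q$ and taking the maximum over $t\in[0,T]$, the discrete parts $\sqrt{\beta}\,\|\nab\bThetau\|_{L^2}+\sqrt{\alpha}\,\|\Theta_q\|_{L^2}$ are bounded by $\sqrt{\mathcal{A}(T;\bu,\tp,p,q)}\,e^{T/2}$ from \reff{eq3.27}; inserting \reff{eq3.26} gives the $C_1(T;\bu,\tp,p,q)^{1/2}e^{T/2}h$ contribution, while \reff{eq3.26a} gives the $C_2^{1/2}e^{T/2}h^2$ contribution of \reff{e4.19}. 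The interpolation parts $\sqrt{\beta}\,\|\nab\bLamu\|_{L^2}$ and $\sqrt{\alpha}\,\|\Lam_q\|_{L^2}$ are controlled by \reff{e4.01} and \reff{e4.00}: for $\bu\in L^\infty(H^2)$ and $q\in L^\infty(H^1)$ these are $O(h)$ and reproduce the $\beta\|\bu\|_{L^\infty(H^2)}^2$ and $\alpha\|q\|_{L^\infty(H^1)}^2$ terms inside $\widehat{C}_1$; raising the regularity to $\bu\in L^\infty(H^3)$, $q\in L^\infty(H^2)$ buys one extra power of $h$ and yields $\widehat{C}_2$.

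The term that needs a little care is the pressure-gradient norm $(\kappa\int_0^T\|\nab(p-p_h)\|_{L^2}^2\,dt)^{1/2}$ in \reff{e4.18}, since \reff{eq3.27} controls only the elliptic-projection quantity $\int_0^T\|\nab\Phi_p\|_{L^2}^2\,ds$. Here I would split $p-p_h=\Psi_p+\Phi_p$ using $\cS^h$ rather than $\cQ^h$, bound $\kappa\int_0^T\|\nab\Phi_p\|_{L^2}^2$ again by \reff{eq3.27}, and bound $\kappa\int_0^T\|\nab\Psi_p\|_{L^2}^2$ by \reff{e4.02} with $p\in L^2(H^2)$. The latter supplies the $\kappa\|p\|_{L^2(H^2)}^2$ term of $\widehat{C}_1$, which explains why $\widehat{C}_1$ carries a pressure dependence whereas $\widehat{C}_2$, belonging to the estimate \reff{e4.19} that omits the pressure term, does not.

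I do not anticipate a real obstacle, since the structural work sits in \reff{eq3.27}. The two points deserving attention are (i) using the elliptic projection $\cS^h$ for the gradient-of-pressure estimate while using the $L^2$ projection $\cQ^h$ for the $L^2$ estimates of $q$ and $\tp$, consistently with the two projections that already appear in \reff{e4.13}; and (ii) verifying that the finiteness hypotheses on $(C_1,\widehat{C}_1)$ (resp. $(C_2,\widehat{C}_2)$) encode exactly the $H^2/H^1$ (resp. $H^3/H^2$) spatial regularity demanded by \reff{e4.00}--\reff{e4.02}, so that the advertised rates $h$ and $h^2$ follow without silently assuming more smoothness than is available. Collecting the discrete and interpolation contributions under one square root and relabeling constants then produces \reff{e4.18} and \reff{e4.19}.
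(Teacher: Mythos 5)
Your proposal is correct and takes essentially the same route as the paper, whose own proof consists precisely of the triangle inequality applied to the splittings $\bu-\bu_h=\bLamu+\bThetau$, $q-q_h=\Lam_q+\Theta_q$, $p-p_h=\Psi_p+\Phi_p$, with the discrete parts bounded via \reff{eq3.27} combined with \reff{eq3.26}--\reff{eq3.26a} and the projection parts via \reff{e4.00}--\reff{e4.02}. Your two points of care --- using the $\cS^h$ split for the pressure gradient (which is why $\widehat{C}_1$ carries $\kappa\norm{p}{L^2(H^2)}^2$ while the pressure-free estimate \reff{e4.19} needs no such term) and matching the $H^2/H^1$ versus $H^3/H^2$ regularity to the rates $h$ and $h^2$ --- are exactly the bookkeeping the paper leaves implicit.
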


\begin{remark}
(a) We note that the above error estimates are 
optimal, and $\norm{\nab \Phi_q}{L^2(L^2)}=\norm{\nab(p-\cS^h p)}{L^2(L^2)}$ 
enjoys a superconvergence when the PDE solution is regular.
It is also interesting to remark that the initial errors at $t=0$ 
do not appear in the above error bounds.

(b) In light of Theorem \ref{regularity}, the regularity 
assumptions of Theorem \ref{theorem3.1} are valid if the domain
$\Ome$ and datum functions $\bbf$ and $\bu_0$ are sufficient regular.
\end{remark}

\section{Fully discrete finite element methods}\label{sec-4} 
\subsection{Formulation of fully  discrete finite element methods}\label{sec4.1}
In this section, we consider space-time discretization which combines
the time-stepping scheme of Algorithm 1 with the (multiphysical) 
spatial discretization developed in the previous section. 
We first prove that under the mesh constraint $\Del t =O(h^2)$ 
the fully discrete solution satisfies a discrete 
energy law which mimics the differential energy laws
\reff{e2.10} and \reff{e2.9}. We then derive optimal order error estimates
in various norms for the numerical solution, which, as expected,
are of the first order in $\Del t$.

Using the time-stepping scheme of Algorithm $1$
to discretize \reff{e4.3}--\reff{e4.2b} we get the
following fully discrete finite element method for problem 
\reff{e2.4}--\reff{e2.7}.

\medskip
{\bf Fully discrete version of Algorithm 1:}
\medskip

\begin{itemize}
\item[(i)]  Compute $q_h^0\in W_h$ and $\bu_h^0\in \bV_h$ by
\begin{align} \label{eq4.1}
\bigl(q_h^0, \chi_h\bigr)&=\bigl(q_0, \chi_h\bigr) 
&&\quad\forall \chi_h\in W_h. \\
\bigl(\nab\bu_h^0, \nab\bw_h\bigr) = \bigl(\nab\bu_0, \nab\bw_h\bigr),
\quad \langle \bu_h^0, \nu\rangle &=\langle \bu_0, \nu\rangle 
&&\quad\forall \bw_h\in \bV_h, \label{eq4.2}
\end{align}
\item[(ii)] For $n=0,1,2, \cdots$, do the following two steps

{\em Step 1:} Solve for $(\bu_h^{n+1},\tp_h^{n+1})\in \bV_h\times M_h$
such that
\begin{alignat}{2} \label{eq4.3}
\beta \bigl( \nab \bu_h^{n+1},\nab \bv_h\bigr)
-\bigl(\tp_h^{n+1},\div \bv_h\bigr)
&=\langle \bbf, \bv_h\rangle_{\small\rm dual}
&&\qquad\forall \bv_h\in \bV_h, \\
\bigl(\div \bu_h^{n+1},\varphi_h \bigr) &=\bigl(q_h^n,\varphi_h \bigr)
&&\qquad\forall \varphi_h \in M_h,  \label{eq4.4} \\
\bigl(\tp_h^{n+1}, 1\bigr) =C_{\tp}, \quad
\langle \bu_h^{n+1}, \nu\rangle &= C_{\bu}.  &&  \label{eq4.5}
\end{alignat}

{\em Step 2:} Solve for $q_h^{n+1}\in W_h$ such that
\begin{alignat}{2} \label{eq4.6}
\bigl(d_t q_h^{n+1},\psi_h \bigr)
+\kappa \bigl(\nab (\alpha q_h^{n+1}+\tp_h^{n+1}), \nab\psi_h \bigr) &=0
&&\qquad\forall \psi_h \in W_h,\\
\bigl(q_h^{n+1}, 1\bigr) & =C_{q}. &&  \label{eq4.7}
\end{alignat}
\end{itemize}

Some remarks need to be given before analyzing the algorithm.

\begin{remark} \label{rem3.1}
(a) At each time step, problem \reff{eq4.3}--\reff{eq4.5} in 
{\em Step 1} of (ii) solves a generalized Stokes problem 
with a slip boundary condition for $\bu$. Two conditions 
in \reff{eq4.5} are imposed to ensure the uniqueness of
the solution. Well-posedness of the Stokes problem follows
easily with help of the {\em inf-sup} condition \reff{e4.1}.

(b) $q_h^{n+1}$ is clearly well-defined in {\em Step 2} of (ii).

(c) The algorithm does not produce $\tp_h^0$ (and $p_h^0$), which is not
needed to execute the algorithm.

(d) Reversing the order of {\em Step 1} and {\em Step 2} in 
(ii) of the above algorithm yields the following alternative 
algorithm.

\medskip
{\bf Fully Discrete Finite Element Algorithm 2:}
\medskip

\begin{itemize}
\item[(i)] Compute $q_h^0\in W_h$ and $(\bu_h^0,\tp_h^0)\in \bV_h\times M_h$ by
\begin{alignat*}{2}
\bigl(q_h^0, \chi_h\bigr) &=\bigl(q_0, \chi_h\bigr) 
&&\qquad\forall \chi_h\in W_h. \\
\beta \bigl( \nab \bu_h^0,\nab \bv_h\bigr)
-\bigl(\tp_h^0,\div \bv_h\bigr)
&=\langle \bbf, \bv_h\rangle_{\small\rm dual}
&&\qquad\forall \bv_h\in \bV_h, \\
\bigl(\div \bu_h^0,\varphi_h \bigr) &=\bigl(q_h^0,\varphi_h \bigr)
&&\qquad\forall \varphi_h \in M_h,  \\
\bigl(\tp_h^0, 1\bigr) =C_{\tp}, \quad
\langle \bu_h^0, \nu\rangle &= C_{\bu}.  && 
\end{alignat*}

\item[(ii)] For $n=0,1,2, \cdots$, do the following two steps

{\em Step 1:} Solve for $q_h^{n+1}\in W_h$ such that
\begin{alignat*}{2} 
\bigl(d_t q_h^{n+1},\psi_h \bigr)
+\kappa \bigl(\nab (\alpha q_h^{n+1}+\tp_h^n), \nab\psi_h \bigr) &=0
&&\qquad\forall \psi_h \in W_h,\\
\bigl(q_h^{n+1}, 1\bigr) & =C_{q}. && 
\end{alignat*}

{\em Step 2:} Solve for $(\bu_h^{n+1},\tp_h^{n+1})\in \bV_h\times M_h$
such that
\begin{alignat*}{2}
\beta \bigl( \nab \bu_h^{n+1},\nab \bv_h\bigr)
-\bigl(\tp_h^{n+1},\div \bv_h\bigr)
&=\langle \bbf, \bv_h\rangle_{\small\rm dual}
&&\qquad\forall \bv_h\in \bV_h, \\
\bigl(\div \bu_h^{n+1},\varphi_h \bigr) &=\bigl(q_h^{n+1},\varphi_h \bigr)
&&\qquad\forall \varphi_h \in M_h,  \\
\bigl(\tp_h^{n+1}, 1\bigr) =C_{\tp}, \quad
\langle \bu_h^{n+1}, \nu\rangle &= C_{\bu}.  && 
\end{alignat*}
\end{itemize}
We note that Algorithm 2 requires the starting values $q_h^0$ and
$(\bu_h^0,\tp_h^0)$, and the latter is generated by solving a
generalized Stokes problem at $t=0$. In general, $\bu_h^0$ of
Algorithms 1 and 2 are different. Later we will remark that 
Algorithm 2 also provides a convergent scheme.
\end{remark}

\subsection{Stability analysis of fully discrete Algorithm 1}\label{sec-4.2}
The primary goal of this subsection is to derive a discrete energy law
which mimics the PDE energy law \reff{e2.9} and the semi-discrete 
energy law \reff{eq3.12}. It turns out that such a discrete energy law
only holds if $h$ and $\Del t$ satisfy the mesh constraint
$\Del t=O(h^2)$. This mesh constraint is the cost of using
the time delay decoupling strategy in \reff{eq4.4}.

Before discussing the stability of the fully discrete Algorithm 1,
We first show that the constraints \reff{eq4.5} an \reff{eq4.7}
are consistent with the equations \reff{eq4.3}, \reff{eq4.4},
and \reff{eq4.6}. 

\begin{lemma}\label{lem3.1}
Let $\{(\bu_h^n,\tp_h^n,q_h^n)\}_{n\geq 1}$ satisfy 
\reff{eq4.3}, \reff{eq4.4},
and \reff{eq4.6}. Define $p_h^{n}:=\tp_h^n+\alpha q_h^{n-1}$ 
for $n=1,2,3,\cdots$.  Then there hold
\begin{alignat}{2}\label{e3.5}
\bigl(q_h^n,1\bigr) &= C_q  &&\qquad\mbox{for } n=0,1,2,\cdots,\\
\langle \bu_h^n,\nu \rangle &= C_{\bu}=C_q &&\qquad\mbox{for } n=1,2,3,\cdots,
\label{e3.6} \\
\bigl(\tp_h^n,1\bigr) &= C_{\tp} &&\qquad\mbox{for } n=1,2,3,\cdots,\label{e3.7}\\
\bigl(p_h^n,1\bigr) &= C_{p} &&\qquad\mbox{for } n=1,2,3,\cdots,\label{e3.8}
\end{alignat}
where $C_q, C_{\bu}, C_{\tp}$, and $C_p$ are defined by
\reff{e1.14}--\reff{e1.16} and \reff{e1.18}
\end{lemma}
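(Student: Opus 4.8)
The plan is to derive all four identities directly from the discrete equations by inserting the same special test functions that worked in the continuous case (Lemma \ref{lem2.2}), after first checking that these test functions actually live in the discrete spaces. Since $W_h \supset M_h$ and $M_h$ contains the constants, we have $1 \in M_h \subset W_h$, so $\chi_h\equiv 1$, $\varphi_h\equiv 1$ and $\psi_h\equiv 1$ are admissible in \reff{eq4.1}, \reff{eq4.4} and \reff{eq4.6}; moreover each component of the position field $x$ is affine, hence $x\in \bV_h$ for the Taylor--Hood pair, so $\bv_h=x$ is admissible in \reff{eq4.3}. These two admissibility facts are the only structural ingredients needed.

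First I would establish \reff{e3.5} by induction on $n$. For the base case $n=0$, taking $\chi_h\equiv 1$ in \reff{eq4.1} gives $(q_h^0,1)=(q_0,1)=C_q$ by the definition of $C_q$ in \reff{e1.14}. For the inductive step I set $\psi_h\equiv 1$ in \reff{eq4.6}; the gradient term vanishes and the definition of $d_t q_h^{n+1}$ then yields $(q_h^{n+1},1)=(q_h^n,1)$, so the value $C_q$ propagates from $n$ to $n+1$. Next, to obtain \reff{e3.6}, I take $\varphi_h\equiv 1$ in \reff{eq4.4}, giving $(\div \bu_h^{n+1},1)=(q_h^n,1)=C_q$; the divergence theorem identifies the left-hand side with $\langle \bu_h^{n+1},\nu\rangle$, and \reff{e1.16} supplies the equality $C_{\bu}=C_q$.

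For \reff{e3.7} I would reproduce the continuous computation of Lemma \ref{lem2.2}: setting $\bv_h=x$ in \reff{eq4.3} and using $\nab x=I$, $\div x=d$, together with the identity $(\nab \bu_h^{n+1},I)=(\div \bu_h^{n+1},1)=C_q$ from the previous step, I can solve for $(\tp_h^{n+1},1)$ and match it against the closed form of $C_{\tp}$ in \reff{e1.18}. Finally \reff{e3.8} follows by addition: from the definition $p_h^n=\tp_h^n+\alpha q_h^{n-1}$ and the already-established identities, $(p_h^n,1)=C_{\tp}+\alpha C_q=C_p$.

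The argument is essentially bookkeeping and carries no deep obstacle; the points that require care are (i) the admissibility check for $1$ and $x$ in the discrete spaces, which I place at the very start, and (ii) the index management. In particular, the shift in $p_h^n:=\tp_h^n+\alpha q_h^{n-1}$ is exactly why \reff{e3.8} is asserted only for $n\geq 1$: it needs $(\tp_h^n,1)$ for $n\geq 1$ from \reff{e3.7} and $(q_h^{n-1},1)$ for $n-1\geq 0$ from \reff{e3.5}, which is precisely why \reff{e3.5} is stated to include the case $n=0$ while the other three begin at $n=1$.
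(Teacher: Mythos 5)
Your proposal is correct and follows essentially the same route as the paper's own proof: test with $\psi_h\equiv 1$ in \reff{eq4.6} to propagate $(q_h^n,1)=C_q$ from the initialization \reff{eq4.1}, with $\varphi_h\equiv 1$ in \reff{eq4.4} for \reff{e3.6}, with $\bv_h=x$ in \reff{eq4.3} for \reff{e3.7}, and conclude \reff{e3.8} from the definition of $p_h^n$. Your explicit admissibility checks ($1\in M_h\subset W_h$ and $x\in \bV_h$ for the Taylor--Hood pair) and the careful index bookkeeping are points the paper leaves implicit, but they do not change the argument.
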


\begin{proof}
Taking $\psi_h\equiv 1$ in \reff{eq4.6} yields
\[
\bigl(d_t q_h^{n+1},1 \bigr)=0.
\]
Hence,
\[
\bigl( q_h^{n+1},1 \bigr)=\bigl( q_h^0,1 \bigr)
=\bigl( q_0,1 \bigr)= C_q \qquad\mbox{for } n=0,1,2,\cdots.
\]
So \reff{e3.5} holds. \reff{e3.6} immediately follows from 
setting $\varphi_h\equiv 1$ in \reff{eq4.4} and using \reff{e3.5}.

As in the time-continuous case, \reff{e3.7} follows from taking
$\bv_h=x$ in \reff{eq4.3} and using the fact that $\div x=d$ and $\nab x=I$.

Finally, \reff{e3.8} is an immediate consequence of the definition
$p_h^{n}:=\tp_h^n+\alpha q_h^{n-1}$, \reff{e3.5}, and \reff{e3.7}. The proof is
complete.
\end{proof}


Next lemma establishes an identity which mimics the 
continuous energy law for the fully discrete solution of Algorithm 1. 

\begin{lemma}\label{lemma3.1}
Let $\{(\bu_h^n,\tp_h^n,q_h^n)\}_{n\geq 1}$ be a solution of
\reff{eq4.1}--\reff{eq4.7}. Define $p_h^n:=\tp_h^n+\alpha q_h^{n-1}$ for $n\geq 1$. 
Then there holds the following identity:
\begin{align}\label{e3.13}
&\mathcal{J}_h^{\ell+1} 
+ \Del t\sum_{n=0}^\ell \Bigl[ \kappa \norm{\nab p_h^{n+1}}{L^2}^2
+\frac{\Del t}{2} \bigl( \beta \norm{d_t \nab\bu_h^{n+1}}{L^2}^2
+\alpha \norm{d_t q_h^{n}}{L^2}^2 \bigr) \Bigr] \\
&\hskip 2in
=\mathcal{J}_h^0-\kappa (\Del t)^2\sum_{n=0}^\ell \bigl( d_t\nab \tp_h^{n+1},
\nab p_h^{n+1} \bigr), \nonumber
\nonumber
\end{align}
where
\begin{align}\label{e3.14}
\mathcal{J}_h^{\ell}:= \frac12 \Bigl[ \beta \norm{\nab \bu_h^{\ell+1}}{L^2}^2
+ \alpha \norm{q_h^{\ell}}{L^2}^2 
- 2\langle \bbf, \bu_h^{\ell+1}\rangle_{\small\rm dual} \Bigr],
 \qquad\ell=0,1,2,\cdots
\end{align}

\end{lemma}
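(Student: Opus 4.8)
The plan is to reproduce, at the fully discrete level, the test-function argument that yields the semi-discrete energy law of Lemma~\ref{lem3.1a}, while carefully tracking the one-step time lag introduced by the decoupling in \reff{eq4.4}. Concretely, at the generic index $n$ I would take $\bv_h=d_t\bu_h^{n+1}$ in the momentum equation \reff{eq4.3} (at step $n+1$), and take $\psi_h=p_h^{n+1}=\tp_h^{n+1}+\alpha q_h^{n}$ in the diffusion equation \reff{eq4.6} (at the appropriate step). The choice $\psi_h=p_h^{n+1}$ is the discrete analogue of testing the PDE diffusion equation with the pressure $p$, and the backward difference $d_t\bu_h^{n+1}$ is the analogue of testing the momentum equation with $\bu_t$, just as in Lemma~\ref{lem2.1}.

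Two ingredients make the cross terms collapse. First, differencing the constraint \reff{eq4.4} at two consecutive steps gives $\bigl(\div d_t\bu_h^{n+1},\varphi_h\bigr)=\bigl(d_t q_h^{n},\varphi_h\bigr)$ for all $\varphi_h\in M_h$; since $\tp_h^{n+1}\in M_h$, this lets me rewrite $\bigl(\tp_h^{n+1},\div d_t\bu_h^{n+1}\bigr)=\bigl(d_t q_h^{n},\tp_h^{n+1}\bigr)$, which is exactly the cross term produced by the diffusion-equation test, because $p_h^{n+1}$ contains the summand $\tp_h^{n+1}$. Hence these two contributions cancel upon adding the tested equations. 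Second, the polarization identity $2\prodt{a}{a-b}=\norm{a}{}^2-\norm{b}{}^2+\norm{a-b}{}^2$, applied to the backward differences of $\nab\bu_h^{n+1}$ and of $q_h^{n}$, converts the time-difference terms into the telescoping energy increment $d_t\mathcal{J}_h^{n}$ together with the nonnegative numerical-dissipation terms $\frac{\Del t}{2}\bigl(\beta\norm{d_t\nab\bu_h^{n+1}}{L^2}^2+\alpha\norm{d_t q_h^{n}}{L^2}^2\bigr)$. The lag is precisely what feeds the $\bu_h^{n+1}$/$q_h^{n}$ offset built into the definition \reff{e3.14} of $\mathcal{J}_h^{n}$, so that the energy increment and the $\bu_h$-dissipation carry the same index.

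The remaining piece is the elliptic form $\kappa\bigl(\nab(\alpha q_h^{n}+\tp_h^{n}),\nab p_h^{n+1}\bigr)$ coming from \reff{eq4.6}. Here I would use that $\alpha q_h^{n}+\tp_h^{n}$ and $p_h^{n+1}=\alpha q_h^{n}+\tp_h^{n+1}$ differ only by $\tp_h^{n}-\tp_h^{n+1}=-\Del t\,d_t\tp_h^{n+1}$, so this term splits into the genuine dissipation $\kappa\norm{\nab p_h^{n+1}}{L^2}^2$ plus a remainder proportional to $\kappa\Del t\bigl(d_t\nab\tp_h^{n+1},\nab p_h^{n+1}\bigr)$. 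After multiplication by $\Del t$ and summation this remainder is exactly the $O((\Del t)^2)$ consistency term on the right-hand side of \reff{e3.13}; it is the price of the explicit (lagged) coupling and vanishes in the semi-discrete limit, which is why Lemma~\ref{lem3.1a} carries no analogous term. Collecting the three groups gives a per-step identity of the form $d_t\mathcal{J}_h^{n}+\kappa\norm{\nab p_h^{n+1}}{L^2}^2+\frac{\Del t}{2}\bigl(\beta\norm{d_t\nab\bu_h^{n+1}}{L^2}^2+\alpha\norm{d_t q_h^{n}}{L^2}^2\bigr)=\mbox{(consistency term)}$, and $\frac{1}{\Del t}\sum d_t\mathcal{J}_h^{n}$ telescopes into $\mathcal{J}_h^{\ell+1}-\mathcal{J}_h^{0}$, producing \reff{e3.13}.

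The main obstacle I anticipate is bookkeeping at the initial step and the alignment of the summation indices, rather than any analytic estimate: \reff{e3.13} is an exact identity, so no Gronwall argument and no mesh constraint $\Del t=O(h^2)$ enter here (those are needed only afterward, when the consistency term is bounded to extract stability). In particular, the diffusion equation \reff{eq4.6} is available only for steps $\geq 1$ and $\tp_h^{0}$ is never produced by the algorithm (Remark~\ref{rem3.1}(c)), so the contribution at $n=0$ cannot be obtained from the generic argument above and must be treated separately, using instead that $\bu_h^{0}$ and $q_h^{0}$ are fixed through the projections \reff{eq4.1}--\reff{eq4.2}. I would also double-check the placement of the end terms in the telescoped sum, since the one-step lag shifts the $\bu_h$- and $q_h$-indices relative to one another; verifying that the cross-term cancellation is exact (which hinges on $\tp_h^{n+1}\in M_h$ together with the discrete constraint \reff{eq4.4}, the iterates being well-defined by Lemma~\ref{lem3.1}) is the step most prone to sign and index slips.
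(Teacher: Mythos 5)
Your proposal reproduces the paper's proof essentially step for step: test \reff{eq4.3} with $\bv_h=d_t\bu_h^{n+1}$, the differenced constraint \reff{eq4.4} with $\varphi_h=\tp_h^{n+1}=p_h^{n+1}-\alpha q_h^{n}$, and the index-lowered \reff{eq4.6} with $\psi_h=p_h^{n+1}$, then use the polarization identity on the backward differences, write $\nab\tp_h^{n}=\nab p_h^{n+1}-\alpha\nab q_h^{n}-\Del t\, d_t\nab\tp_h^{n+1}$ to isolate $\kappa\norm{\nab p_h^{n+1}}{L^2}^2$ plus the $\kappa\Del t\bigl(d_t\nab\tp_h^{n+1},\nab p_h^{n+1}\bigr)$ remainder, and sum and telescope --- exactly the computations \reff{e3.9}--\reff{e3.12a} in the paper. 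Your caution about the $n=0$ term (no $\tp_h^{0}$, no $q_h^{-1}$ from Algorithm 1) is well placed, but the paper resolves it by convention rather than a separate argument (cf.\ $q_h^{-1}:\equiv q_h^{0}$ in Theorem \ref{thm3.3}), so this is a presentational refinement rather than a difference in method.
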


\begin{proof}
Setting $\bv_h=d_t \bu_h^{n+1}$ in \reff{eq4.3} gives
\begin{equation}\label{e3.9}
\frac{\beta}{2} d_t \norm{\nab \bu_h^{n+1}}{L^2}^2
+\frac{\beta \Del t}{2} \norm{d_t \nab\bu_h^{n+1}}{L^2}^2
= d_t \langle \bbf, \bu_h^{n+1}\rangle_{\small\rm dual}
 +\bigl(\tp_h^{n+1},\div d_t\bu_h^{n+1}\bigr).
\end{equation}

Applying the difference operator $d_t$ to \reff{eq4.4} and followed 
by taking the test function $\varphi_h=\tp_h^{n+1}=p_h^{n+1}-\alpha q_h^n$ yield
\begin{align}\label{e3.10}
\bigl(\div d_t\bu_h^{n+1},\tp_h^{n+1} \bigr)
&=\bigl( d_t q_h^n, p_h^{n+1} \bigr) - \alpha \bigl( d_t q_h^n,q_h^n\bigr)
\\
&=\bigl( d_t q_h^n, p_h^{n+1} \bigr)
-\frac{\alpha}2 \Bigl[d_t\norm{q_h^n}{L^2}^2
  +\Del t \norm{d_t q_h^n}{L^2}^2 \Bigr] \nonumber 
\end{align}

To get an expression for the first term on the right-hand side of
\reff{e3.10}, we set $\psi_h=p_h^{n+1}$ in \reff{eq4.6} (after lowering
the index from $n+1$ to $n$ in the equation)
and using the definition $p_h^{n+1}:= \tp_h^{n+1}+\alpha q_h^n$  to get
\begin{align}\label{e3.11}
\bigl(d_t q_h^n, p_h^{n+1} \bigr)
&= -\kappa \bigl(\nab [\tp_h^n+\alpha q_h^n], \nab p_h^{n+1} \bigr) \\
&= -\kappa \norm{\nab p_h^{n+1}}{L^2}^2 
+ \kappa \Del t \bigl(d_t\nab \tp_h^{n+1}, \nab p_h^{n+1} \bigr). \nonumber 
\end{align}

Combining \reff{e3.9}--\reff{e3.11} we obtain
\begin{align}\label{e3.12}
&\frac12 d_t \Bigl[ \beta \norm{\nab \bu_h^{n+1}}{L^2}^2
+ \alpha \norm{q_h^n}{L^2}^2
-2\langle \bbf, \bu_h^{n+1}\rangle_{\small\rm dual} \Bigr]
 + \kappa \norm{\nab p_h^{n+1}}{L^2}^2  \\ 
&\qquad\qquad 
+\frac{\Del t}{2} \Bigl[ \beta \norm{d_t \nab\bu_h^{n+1}}{L^2}^2
 + \alpha \norm{d_t q_h^n}{L^2}^2 \Bigr]
= -\kappa \Del t \bigl(d_t\nab \tp_h^{n+1}, \nab p_h^{n+1} \bigr) \bigr). 
\nonumber
\end{align}

Applying the summation operator
$\Del t \sum_{n=0}^{\ell}$ for any $0\leq \ell \leq \frac{T}{\Del t}-1$ yields
\begin{align}\label{e3.12a}
&\mathcal{J}_h^{\ell+1}
+\Del t\sum_{n=0}^\ell\Bigl[\kappa\norm{\nab p_h^{n+1}}{L^2}^2
+\frac{\Del t}{2} \bigl( \beta \norm{d_t \nab\bu_h^{n+1}}{L^2}^2
+\alpha \norm{d_t q_h^{n+1}}{L^2}^2 \bigr) \Bigr] \\
&\hskip 2in
=\mathcal{J}_h^0-\kappa (\Del t)^2\sum_{n=0}^\ell \bigl( d_t\nab \tp_h^{n+1},
\nab p_h^{n+1} \bigr), \nonumber
\end{align}
where $\mathcal{J}_h^\ell$ is defined by \reff{e3.14}. Hence, \reff{e3.13}
holds. The proof is completed.
\end{proof}

\begin{remark}
The extra term on the right-hand side of \reff{e3.13} is
due to the time lag in \reff{eq4.4} which is needed to decouple the
original whole system into two sub-systems. This term
is smaller if high order time-stepping schemes
are used to replace the implicit Euler scheme in Algorithm $1$.
\end{remark}

\begin{theorem}\label{thm3.3} 
Let $\{(\bu_h^n,\tp_h^n,q_h^n)\}_{n\geq 1}$ be same as in Lemma \ref{lemma3.1}.
Set $q_h^{-1}:\equiv q_h^{0}$. Then there holds the following discrete 
energy inequality: 
\begin{align}\label{e3.100}
&\mathcal{J}_h^{\ell+1} 
+ \Del t\sum_{n=0}^\ell \Bigl[ \frac{\kappa}2 \norm{\nab p_h^{n+1}}{L^2}^2
+\frac{\Del t}{4} \bigl( \beta \norm{d_t \nab\bu_h^{n+1}}{L^2}^2
+\alpha \norm{d_t q_h^{n}}{L^2}^2 \bigr) \Bigr] 
\leq \mathcal{J}_h^0,
\end{align}
provided that $\Del t= O(h^2)$.

\end{theorem}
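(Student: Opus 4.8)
The plan is to start from the discrete identity \reff{e3.13} established in Lemma \ref{lemma3.1} and to absorb the residual term $-\kappa(\Del t)^2\sum_{n=0}^\ell(d_t\nab\tp_h^{n+1},\nab p_h^{n+1})$ into the dissipation already present on the left-hand side. Comparing \reff{e3.13} with the target \reff{e3.100}, we may spare, at each $n$, a term $\frac{\kappa}{2}\norm{\nab p_h^{n+1}}{L^2}^2$ and a term $\frac{\Del t}{4}\beta\norm{d_t\nab\bu_h^{n+1}}{L^2}^2$, so it suffices to bound the residual by $\Del t\sum_{n=0}^\ell\bigl[\tfrac{\kappa}{2}\norm{\nab p_h^{n+1}}{L^2}^2+\tfrac{\Del t}{4}\beta\norm{d_t\nab\bu_h^{n+1}}{L^2}^2\bigr]$. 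First I would dispose of the $n=0$ summand: since $q_h^{-1}:=q_h^0$ forces $d_t q_h^0=0$, the corresponding contribution vanishes, so no value $\tp_h^0$ (which Algorithm 1 never produces) is ever needed.

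For $n\geq1$ the crucial step is to trade the troublesome factor $d_t\nab\tp_h^{n+1}$ for a velocity quantity that already appears on the left. I would apply the Cauchy--Schwarz inequality, then the inverse inequality \reff{e4.0} to write $\norm{d_t\nab\tp_h^{n+1}}{L^2}\leq c_0 h^{-1}\norm{d_t\tp_h^{n+1}}{L^2}$. To control $\norm{d_t\tp_h^{n+1}}{L^2}$ I would invoke the inf--sup condition \reff{e4.1}: by the conservation law \reff{e3.7} we have $(d_t\tp_h^{n+1},1)=0$, so $d_t\tp_h^{n+1}\in M_h\cap L^2_0(\Ome)$; taking the discrete time difference of the Stokes equation \reff{eq4.3} (recall $\bbf$ is $t$-independent) gives $\beta(d_t\nab\bu_h^{n+1},\nab\bv_h)=(d_t\tp_h^{n+1},\div\bv_h)$ for all $\bv_h\in\bV_h$, whence $\beta_0\norm{d_t\tp_h^{n+1}}{L^2}\leq\beta\norm{d_t\nab\bu_h^{n+1}}{L^2}$. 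Chaining these yields the key bound $\norm{d_t\nab\tp_h^{n+1}}{L^2}\leq (c_0\beta/\beta_0)\,h^{-1}\norm{d_t\nab\bu_h^{n+1}}{L^2}$.

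Feeding this into the residual and applying Young's inequality with a free parameter $\e>0$, each summand is bounded by $\tfrac{c_0\beta\kappa}{\beta_0}\tfrac{(\Del t)^2}{h}\bigl[\tfrac{\e}{2}\norm{d_t\nab\bu_h^{n+1}}{L^2}^2+\tfrac{1}{2\e}\norm{\nab p_h^{n+1}}{L^2}^2\bigr]$. Matching the two pieces against the spared terms forces two inequalities on $\e$: the pressure piece demands $\e\gtrsim\Del t/h$, while the velocity piece demands $\e\lesssim h/\kappa$. These windows overlap precisely when $\Del t\lesssim h^2$; choosing $\e$ in the overlap (for instance $\e=\beta_0 h/(2c_0\kappa)$, which is admissible once $\Del t\leq\beta_0^2 h^2/(2c_0^2\beta\kappa)$) then absorbs the residual entirely and delivers \reff{e3.100}.

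I expect the main obstacle to be exactly this simultaneous balancing that generates the mesh constraint: the inverse inequality injects an $h^{-1}$, and the only way to tame the resulting $(\Del t)^2/h$ factor with a single Young parameter is to play the pressure dissipation $\kappa\norm{\nab p_h^{n+1}}{L^2}^2$ against the numerical velocity dissipation $\Del t\,\beta\norm{d_t\nab\bu_h^{n+1}}{L^2}^2$, which succeeds only at the threshold $\Del t=O(h^2)$ and fails for larger time steps. A secondary technical point, worth stating carefully, is the legitimacy of differencing \reff{eq4.3} only for $n\geq1$ (so that both $\tp_h^n$ and $\tp_h^{n+1}$ are genuinely produced by the scheme), together with the convention $q_h^{-1}:=q_h^0$ that neutralizes the $n=0$ boundary term.
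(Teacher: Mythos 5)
Your proof is correct and is essentially the paper's own argument: the paper likewise bounds the residual term in \reff{e3.13} by Cauchy--Schwarz, the inverse inequality \reff{e4.0}, and the inf-sup condition \reff{e4.1} applied to the time-differenced Stokes equation \reff{eq4.3} (giving $\norm{\tp_h^{n+1}-\tp_h^n}{L^2}\leq \beta_0^{-1}\beta\,\Del t\,\norm{d_t\nab \bu_h^{n+1}}{L^2}$), and then absorbs it into the spared dissipation terms via Young's inequality under the constraint $\Del t\leq \beta_0^2h^2/(4\kappa\beta c_0^2)$. Your explicit balancing of the Young parameter reproduces exactly this threshold up to a harmless constant ($1/2$ versus $1/4$), and your separate treatment of the $n=0$ summand under the convention $q_h^{-1}:=q_h^0$ is consistent with (indeed slightly more careful than) the paper, which applies the same bounds uniformly in $n$.
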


\begin{proof}
By Schwarz inequality and the inverse inequality we have
\begin{align}\label{e3.101}
\bigl| \bigl( d_t\nab \tp_h^{n+1}, \nab p_h^{n+1} \bigr) \bigr|
&=\frac{1}{\Del t} \Bigl[ \norm{\nab (\tp_h^{n+1}-\tp_h^n)}{L^2}^2
+ \frac{1}2 \norm{\nab p_h^{n+1}}{L^2}^2 \Bigr]\\
&\leq \frac{1}{\Del t} \Bigl[ c_0^2 h^{-2} \norm{\tp_h^{n+1}-\tp_h^n}{L^2}^2
+ \frac{1}2 \norm{\nab p_h^{n+1}}{L^2}^2 \Bigr]. \no
\end{align}
To bound the first term on the right-hand side, we appeal to the 
inf-sup condition \reff{e4.1} (note that $\tp_h^{n+1}-\tp_h^n\in L^2_0(\Ome)$) 
and \reff{eq4.3} to get
\begin{align}\label{e3.102}
\norm{\tp_h^{n+1}-\tp_h^n}{L^2} &\leq \beta_0^{-1} \sup_{v_h\in \bV_h\cap \bX}
\frac{ (\div \bv_h, \tp_h^{n+1}-\tp_h^n)}{\norm{\nab \bv_h}{L^2}} \\
&= \beta_0^{-1} \sup_{v_h\in \bV_h\cap \bX}
\frac{\beta \bigl(\nab(\bu_h^{n+1}-\bu_h^n),\nab\bv_h\bigr)}{\norm{\nab \bv_h}{L^2}} \no \\
&\leq \beta_0^{-1} \beta \norm{ \nab (\bu_h^{n+1}-\bu_h^n)}{L^2}\no\\
&=\beta_0^{-1} \beta \Del t \norm{ d_t\nab \bu_h^{n+1}}{L^2}. \no
\end{align}

\reff{e3.100} follows from combining \reff{e3.13}, \reff{e3.101}, and 
\reff{e3.102} and choosing $\Del t\leq \frac{\beta_0^2}{4\kappa\beta c_0^2} h^2$.
The proof is complete.
\end{proof}

\begin{remark}
It can be shown that all the results obtained in this subsection
for Algorithm 1 also hold for Algorithm 2. The main
differences are (a) the ``correct" discrete pressure $p_h^n$ for Algorithm 2
is $p_h^n:=\tp_h^n+\alpha q_h^n$ instead of $p_h^n:=\tp_h^n+\alpha q_h^{n-1}$;
(b) the ``correct" energy functional $\mathcal{J}_h^\ell$ for Algorithm 2 is
\[
\mathcal{J}_h^{\ell}:= \frac12 \Bigl[ \beta \norm{\nab \bu_h^{\ell}}{L^2}^2
+ \alpha \norm{q_h^{\ell}}{L^2}^2 
- 2\langle \bbf, \bu_h^{\ell}\rangle_{\small\rm dual} \Bigr],
\qquad\ell=0,1,2,\cdots
\]
\end{remark}

\subsection{Convergence analysis}\label{sec-4.3}
In this subsection we derive error estimates for the fully discrete Algorithm $1$.
To the end, we only need to derive the time discretization errors by comparing
the solution of \reff{eq4.1}--\eqref{eq4.7} with that of \reff{e4.3}--\reff{e4.2b}
because the spatial discretization errors have already been derived in
the previous section.  

Introduce notations
\begin{alignat*}{2}
&\bE_{\bu}^n:=\bu_h(t_n)-\bu_h^n,\qquad
&&E_{q}^n:=q_h(t_n)-q_h^n,\\
&E_{\tp}^n:=\tp_h(t_n)-\tp_h^n, \qquad
&&E_p^n:=p_h(t_n)-p_h^n.
\end{alignat*}
It is easy to check that
\[
E_p^n=\hE_p^n + \alpha \Del t d_tq_h(t_n),\qquad
\hE_p^n:=E_{\tp}^n + \alpha E_q^{n-1}. 
\]

\begin{lemma}\label{lem3.2}
Let $\{(\bu_h^n,\tp_h^n,q_h^n)\}_{n\geq 0}$ be generated by the fully
discrete Algorithm $1$ and $\bE_{\bu}^n, E_{q}^n, E_{\tp}^n$, and $E_p^n$  
be defined as above. Set $q_h(t_{-1}):\equiv q_h(t_0)$ and
$q_h^{-1}:\equiv q_h^{0}$. Then there holds the following identity:
\begin{align}\label{e3.19}
&\mathcal{E}_h^{\ell+1}
+\Del t\sum_{n=0}^\ell \Bigl[ \kappa \norm{\nab \hE_p^{n+1}}{L^2}^2
+\frac{\Del t}{2} \bigl( \beta \norm{d_t \nab \bE_{\bu}^{n+1}}{L^2}^2
  + \alpha \norm{d_t E_q^{n}}{L^2}^2 \bigr) \Bigr] \\
&\quad
=\mathcal{E}_h^0-(\Del t)^2
\sum_{n=0}^\ell\Bigl[\kappa \bigl( d_t \nab E_{\tp}^{n+1}, \nab \hE_p^{n+1}\bigr)
- \bigl(d_t^2 q_h(t_{n+1}), E_{\tp}^{n+1} \bigr) \Bigr] \no\\  
&\qquad\qquad
+\Del t\sum_{n=0}^\ell \bigl(R_h^{n},\hE_p^{n+1} \bigr), \no 
\end{align}
where
\begin{align}\label{e3.20}
\mathcal{E}_h^\ell &:= \frac12 \Bigl[ \beta \norm{\nab \bE_{\bu}^{\ell+1}}{L^2}^2
+\alpha \norm{E_q^{\ell}}{L^2}^2  \Bigr],\quad \ell=0,1,2,\cdots, \\
R_h^{n} &:=-\frac{1}{\Del t} \int_{t_{n-1}}^{t_{n}} (s-t_{n-1})(q_h)_{tt}(s)\,ds.
\label{e3.20a}
\end{align}
\end{lemma}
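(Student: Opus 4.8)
The plan is to reproduce, at the level of the error equations, exactly the energy computation that established the discrete identity \reff{e3.13} in Lemma \ref{lemma3.1}. First I would write down the three error equations obtained by testing the semi-discrete system \reff{e4.3}, \reff{e4.4}, \reff{e4.2} at $t=t_{n+1}$ and subtracting the fully discrete equations \reff{eq4.3}, \reff{eq4.4}, \reff{eq4.6}. Since $\bbf$ cancels, the momentum error equation reads $\beta(\nab\bE_{\bu}^{n+1},\nab\bv_h)-(E_{\tp}^{n+1},\div\bv_h)=0$. The divergence error equation carries the decoupling time lag: because \reff{eq4.4} uses $q_h^n$ while the semi-discrete constraint at $t_{n+1}$ uses $q_h(t_{n+1})$, I would split $q_h(t_{n+1})-q_h^n=E_q^n+\Del t\,d_tq_h(t_{n+1})$, giving $(\div\bE_{\bu}^{n+1},\varphi_h)=(E_q^n+\Del t\,d_tq_h(t_{n+1}),\varphi_h)$. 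For the diffusion error equation I would replace the exact derivative $q_{ht}(t_{n+1})$ by the backward difference quotient through a Taylor expansion with integral remainder, which yields precisely $q_{ht}(t_{n+1})=d_tq_h(t_{n+1})-R_h^{n+1}$ with $R_h^{n+1}$ as in \reff{e3.20a}; after lowering the index from $n+1$ to $n$ this gives $(d_tE_q^n,\psi_h)+\kappa(\nab(E_{\tp}^n+\alpha E_q^n),\nab\psi_h)=(R_h^n,\psi_h)$.

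Next I would choose the same test functions as in Lemma \ref{lemma3.1}: $\bv_h=d_t\bE_{\bu}^{n+1}$ in the momentum equation, $\varphi_h=E_{\tp}^{n+1}$ in the divergence equation after applying $d_t$ to it (which converts the lag term $\Del t\,d_tq_h(t_{n+1})$ into $\Del t\,d_t^2q_h(t_{n+1})$), and $\psi_h=\hE_p^{n+1}$ in the index-lowered diffusion equation. Here I would use the splitting $E_{\tp}^{n+1}=\hE_p^{n+1}-\alpha E_q^n$, the identity $(d_ta^{n+1},a^{n+1})=\tfrac12 d_t\norm{a^{n+1}}{L^2}^2+\tfrac{\Del t}2\norm{d_ta^{n+1}}{L^2}^2$, and the rewriting $E_{\tp}^n+\alpha E_q^n=\hE_p^{n+1}-\Del t\,d_tE_{\tp}^{n+1}$ (mirroring \reff{e3.11}) that produces the consistency term $\kappa\Del t(\nab d_tE_{\tp}^{n+1},\nab\hE_p^{n+1})$. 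Adding the three tested equations then yields a one-step identity whose left-hand side is $\tfrac12 d_t[\beta\norm{\nab\bE_{\bu}^{n+1}}{L^2}^2+\alpha\norm{E_q^n}{L^2}^2]+\kappa\norm{\nab\hE_p^{n+1}}{L^2}^2+\tfrac{\Del t}2(\beta\norm{d_t\nab\bE_{\bu}^{n+1}}{L^2}^2+\alpha\norm{d_tE_q^n}{L^2}^2)$ and whose right-hand side carries the three source contributions $\kappa\Del t(\nab d_tE_{\tp}^{n+1},\nab\hE_p^{n+1})$, $\Del t(d_t^2q_h(t_{n+1}),E_{\tp}^{n+1})$, and $(R_h^n,\hE_p^{n+1})$.

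Finally I would apply the operator $\Del t\sum_{n=0}^\ell$ and telescope the difference-quotient energy terms. The conventions $q_h(t_{-1}):\equiv q_h(t_0)$ and $q_h^{-1}:\equiv q_h^0$ make $E_q^{-1}=E_q^0$, while the identical defining relations \reff{eq4.2} and \reff{eq4.5a} give $\bE_{\bu}^0=0$, so the telescoped energy collapses to $\mathcal{E}_h^{\ell+1}-\mathcal{E}_h^0$ with $\mathcal{E}_h^\ell$ as in \reff{e3.20}, and the $\Del t$ prefactors upgrade the first two source terms to the $(\Del t)^2$ terms displayed in \reff{e3.19}. This produces \reff{e3.19}.

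The computation is entirely parallel to Lemma \ref{lemma3.1}; the only genuinely new ingredients are the two truncation mechanisms, and keeping their bookkeeping straight is the main obstacle. Specifically, I expect the delicate points to be (i) the treatment of the decoupling lag in \reff{eq4.4}, which after differencing in time is precisely what generates the $d_t^2q_h(t_{n+1})$ source rather than a harmless lower-order term, and (ii) the exact form of the Taylor remainder $R_h^n$ in \reff{e3.20a}, including the index shift induced by testing the diffusion equation at the lowered index $n$ with $\psi_h=\hE_p^{n+1}$. Everything else reduces to the same summation and telescoping already carried out for the stability identity.
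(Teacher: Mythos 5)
Your proposal is correct and follows essentially the same route as the paper's proof: the Taylor-remainder-perturbed semi-discrete diffusion equation \reff{eq4.100} with $R_h^{n+1}$ exactly as in \reff{e3.20a}, the error equations \reff{e3.15}--\reff{e3.18} with the lag splitting $q_h(t_{n+1})-q_h^n=E_q^n+\Del t\, d_t q_h(t_{n+1})$, the identical test functions $\bv_h=d_t\bE_{\bu}^{n+1}$, $\varphi_h=E_{\tp}^{n+1}$ (after applying $d_t$), $\psi_h=\hE_p^{n+1}$ (after lowering the index from $n+1$ to $n$), and the final summation $\Del t\sum_{n=0}^\ell$ with telescoping enabled by $\bE_{\bu}^0=0$, $E_q^0=0$ and the stated conventions. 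The only (immaterial) discrepancy is the sign of the consistency term $\kappa(\Del t)^2\sum_n\bigl(d_t\nab E_{\tp}^{n+1},\nab\hE_p^{n+1}\bigr)$: your bookkeeping produces it with a plus sign, which is what the computation via $E_{\tp}^n+\alpha E_q^n=\hE_p^{n+1}-\Del t\, d_t E_{\tp}^{n+1}$ actually yields, whereas \reff{e3.19} prints it with a minus (the same slip already present in \reff{e3.12}); since this term is subsequently bounded in absolute value in \reff{e3.23}, nothing downstream is affected.
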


\begin{proof}
First, by Taylor's formula and \reff{e4.2} we have
\begin{align} \label{eq4.100}
\bigl(d_t q_h(t_{n+1}), \psi_h \bigr)
+\kappa \bigl(\nab (\tp_h(t_{n+1}) + \alpha q_h(t_{n+1})) , \nab\psi_h \bigr)
= \bigl( R_h^{n+1}, \psi_h\bigr) \qquad\forall \psi_h \in W_h, 
\end{align}

Then, subtracting \reff{eq4.3} from \reff{e4.3}, \reff{eq4.4} from
\reff{e4.4}, \reff{eq4.5} from \reff{eq3.13},  and \reff{eq4.6} from
\reff{eq4.100}, \reff{eq4.2} from \reff{e4.5a} respectively, we get the 
following error equations:
\begin{alignat}{2} \label{e3.15}
&\beta \bigl( \nab \bE_{\bu}^{n+1},\nab \bv_h\bigr)
-\bigl( E_{\tp}^{n+1},\div \bv_h\bigr) =0 &&\qquad\forall \bv_h\in \bV_h, \\
&\bigl(\div \bE_{\bu}^{n+1},\varphi_h \bigr)
=\bigl(E_q^n,\varphi_h \bigr)+ \Del t\bigl(d_t q_h(t_{n+1}),\varphi_h \bigr)
&&\qquad\forall \varphi_h M_h,  \label{e3.16} \\
&\langle \bE_{\bu}^{n+1}, \nu\rangle =0, \quad \bigl(E_{\tp}^{n+1}, 1\bigr)=0, 
\quad \bE_\bu^0= 0, && \label{e3.16a}\\ 
&\bigl(d_t E_q^{n+1},\psi_h \bigr)+ \kappa \bigl(\nab (E_{\tp}^{n+1}+\alpha E_q^{n+1}),
\nab \psi_h \bigr) = \bigl(R_h^{n+1},\psi_h \bigr)
&&\qquad\forall \psi_h \in W_h,  \label{e3.17} \\
& \bigl(E_{q}^{n+1}, 1\bigr)= 0,\quad E_q^0=0.  &&  \label{e3.18}
\end{alignat}

\reff{e3.19} then follows from setting $\bv_h=d_t E_{\bu}^{n+1}$ in \reff{e3.15},
$\varphi_h= E_{\tp}^{n+1}$ in \reff{e3.16} (after applying the difference
operator $d_t$ to the equation), $\psi_h= \hE_p^{n+1}=E_{\tp}^{n+1} +\alpha E_q^n$ 
in \reff{e3.17} (after lowering the index from $n+1$ to $n$ in the equation), adding
the resulted equations, and applying the summation operator 
$\Del t\sum_{n=0}^\ell$ to the sum. The proof is complete.
\end{proof}

From the above lemma we then obtain the following error estimate.

\begin{theorem}\label{thm3.2}
Let $N$ be a (large) positive integer and $\Del t:=\frac{T}{N}$. Let
the error functions $\bE_{\bu}^n, E_{q}^n, E_{\tp}^n$, and $\hE_p^n$
be same as in Lemma \ref{lem3.2}. Assume $\Del t$ satisfies
the mesh constraint $\Del t < \frac{\beta_0^2}{8\kappa\beta c_0^2} h^2$.
Then there holds error estimate
\begin{align} \label{e3.21}
&\max_{0\leq n\leq  N} \Bigl[ \sqrt{\beta}\norm{\nab \bE_{\bu}^{n}}{L^2}
+\sqrt{\alpha} \norm{ E_q^{n}}{L^2} \Bigr]
+\Bigl( \Del t\sum_{n=0}^{N-1}
\kappa \norm{\nab \hE_p^{n+1}}{L^2}^2 \Bigr)^{\frac12} \\
&\quad 
+\Bigl[\sum_{n=1}^{N-1} 
\bigl(\beta \norm{\nab(\bE_{\bu}^{n+1}-\bE_{\bu}^{n})}{L^2}^2 
+\alpha\norm{E_q^{n}-E_q^{n-1}}{L^2}^2 \bigr) \Bigr]^{\frac12}
\leq C_3(T;q_h)^{\frac12} \Del t, \no
\end{align}
where
\[
C_3(T;q_h):= 16\bigl[ \beta_0^{-2} \beta \norm{ (q_h)_t}{L^2(L^2)}^2 
+ \kappa^{-1} \norm{ (q_h)_{tt}}{L^2(H^{-1})}^2 \bigr].
\]
\end{theorem}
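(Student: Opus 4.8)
The plan is to run a discrete energy argument based entirely on the error identity \reff{e3.19} of Lemma \ref{lem3.2}, showing that each of the three terms on its right-hand side is either absorbed into the left-hand side or bounded by a multiple of $C_3(T;q_h)(\Del t)^2$. Note first that the left-hand side of \reff{e3.19} already supplies the three quantities appearing in \reff{e3.21}: the terminal energy, the dissipation $\Del t\sum_n\kappa\norm{\nab\hE_p^{n+1}}{L^2}^2$, and (via the elementary identity $(\Del t)^2\norm{d_t\nab\bE_{\bu}^{n+1}}{L^2}^2=\norm{\nab(\bE_{\bu}^{n+1}-\bE_{\bu}^{n})}{L^2}^2$ and its $E_q$ analogue) the jump sums. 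Moreover the initial errors vanish, $\bE_{\bu}^0=0$ and $E_q^0=0$ by \reff{e3.16a} and \reff{e3.18}, so the ``initial energy'' on the right is harmless. It therefore suffices to control the cross term, the truncation term, and the consistency term.

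First I would treat the cross term $\kappa(\Del t)^2\sum_n(d_t\nab E_{\tp}^{n+1},\nab\hE_p^{n+1})$ exactly as in the stability proof of Theorem \ref{thm3.3}. Writing $d_t\nab E_{\tp}^{n+1}=\frac1{\Del t}\nab(E_{\tp}^{n+1}-E_{\tp}^{n})$ and applying the inverse inequality \reff{e4.0} reduces it to $\norm{E_{\tp}^{n+1}-E_{\tp}^{n}}{L^2}$; since $E_{\tp}^{n+1}-E_{\tp}^{n}\in M_h\cap L^2_0(\Ome)$, the inf-sup condition \reff{e4.1} combined with the differenced momentum error equation \reff{e3.15} gives $\norm{E_{\tp}^{n+1}-E_{\tp}^{n}}{L^2}\le\beta_0^{-1}\beta\,\Del t\,\norm{d_t\nab\bE_{\bu}^{n+1}}{L^2}$, just as in \reff{e3.102}. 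A Young split then yields a multiple of the dissipation plus a term proportional to $\kappa c_0^2h^{-2}\beta_0^{-2}\beta^2(\Del t)^3\sum_n\norm{d_t\nab\bE_{\bu}^{n+1}}{L^2}^2$, and under the mesh constraint $\Del t<\frac{\beta_0^2}{8\kappa\beta c_0^2}h^2$ both are absorbed into the dissipation and into $\frac{\Del t}2\beta\,\Del t\sum_n\norm{d_t\nab\bE_{\bu}^{n+1}}{L^2}^2$ on the left. This is precisely where the parabolic-type constraint $\Del t=O(h^2)$ is genuinely needed.

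The truncation term $\Del t\sum_n(R_h^{n},\hE_p^{n+1})$ I would bound by $H^{-1}$--$H^1$ duality: since $\hE_p^{n+1}$ has zero mean, Poincar\'e gives $\norm{\hE_p^{n+1}}{H^1}\le c\norm{\nab\hE_p^{n+1}}{L^2}$, while the representation \reff{e3.20a} and Cauchy--Schwarz in $s$ give $\norm{R_h^{n}}{H^{-1}}^2\le\Del t\int_{t_{n-1}}^{t_{n}}\norm{(q_h)_{tt}(s)}{H^{-1}}^2\,ds$; a Young split produces a further multiple of the dissipation (absorbed) plus $C(\Del t)^2\kappa^{-1}\norm{(q_h)_{tt}}{L^2(H^{-1})}^2$, the second ingredient of $C_3$. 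The main obstacle is the consistency term $(\Del t)^2\sum_n(d_t^2q_h(t_{n+1}),E_{\tp}^{n+1})$, which is the price of the time lag in the divergence constraint \reff{eq4.4}: it pairs a \emph{second} difference quotient of $q_h$ with a pressure error $E_{\tp}^{n+1}$ that is controlled only in $L^2$, so neither a plain $L^2$ pairing nor $H^{-1}$--$H^1$ duality (which would invoke the ruinous factor $h^{-1}$ on $\norm{E_{\tp}^{n+1}}{H^1}$) is admissible by itself. My plan is twofold. Splitting $E_{\tp}^{n+1}=\hE_p^{n+1}-\alpha E_q^{n}$, the $\hE_p^{n+1}$ part is handled by $H^{-1}$--$H^1$ duality as for $R_h^{n}$, using $\sum_n\norm{d_t^2q_h(t_{n+1})}{H^{-1}}^2\le C(\Del t)^{-1}\norm{(q_h)_{tt}}{L^2(H^{-1})}^2$, again feeding $C_3$. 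For the remaining contribution I would perform a summation by parts in $n$, converting $d_t^2q_h$ into the first difference $d_tq_h\approx(q_h)_t$ (the boundary term at $n=0$ drops since $d_tq_h(t_0)=0$ under the convention $q_h(t_{-1})\equiv q_h(t_0)$), and then bound the pressure errors through the inf-sup estimates $\norm{E_{\tp}^{n+1}}{L^2}\le\beta_0^{-1}\beta\norm{\nab\bE_{\bu}^{n+1}}{L^2}$ and $\norm{d_tE_{\tp}^{n+1}}{L^2}\le\beta_0^{-1}\beta\norm{d_t\nab\bE_{\bu}^{n+1}}{L^2}$ from \reff{e3.15}.

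After these splits, the resulting boundary term absorbs directly into the single terminal energy, the remaining sum absorbs into the jump-sum $\frac{\Del t}2\beta\,\Del t\sum_n\norm{d_t\nab\bE_{\bu}^{n+1}}{L^2}^2$, and the Young residuals collect into the $\beta_0^{-2}\beta\norm{(q_h)_t}{L^2(L^2)}^2$ contribution, the first ingredient of $C_3$. Because every term proportional to an energy $\mathcal{E}_h^{n}$ that survives carries a factor $O(\Del t)$, the argument closes by a discrete Gronwall inequality whose growth constant tends to $1$ as $\Del t\to0$, so that no exponential $e^{T}$ appears — in contrast with the semi-discrete estimate \reff{e4.18}. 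A final triangle inequality then assembles the three left-hand quantities into the asserted bound \reff{e3.21}, and I expect the delicate bookkeeping of the consistency term (the interplay of summation by parts, inf-sup, and the mesh constraint) to be the step demanding the most care.
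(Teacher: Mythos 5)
Your proposal follows essentially the same route as the paper's proof of Theorem \ref{thm3.2}: you start from the error identity \reff{e3.19} of Lemma \ref{lem3.2}, note $\bE_{\bu}^0=0$ and $E_q^0=0$, absorb the cross term $\kappa(\Del t)^2\sum_n(d_t\nab E_{\tp}^{n+1},\nab\hE_p^{n+1})$ via the inverse inequality \reff{e4.0} and the inf-sup transfer $\norm{d_tE_{\tp}^{n+1}}{L^2}\le\beta_0^{-1}\beta\norm{d_t\nab\bE_{\bu}^{n+1}}{L^2}$ under the mesh constraint (this is exactly \reff{e3.23}, mirroring \reff{e3.101}--\reff{e3.102}), bound the truncation term by $H^{-1}$--$H^1$ duality with Poincar\'e on the mean-zero $\hE_p^{n+1}$ as in \reff{e3.28}, and attack the consistency term by summation by parts in $n$ using $d_tq_h(t_0)=0$, with the boundary term absorbed into the terminal energy and the summed term into the jump dissipation --- which is precisely the paper's \reff{e3.24}--\reff{e3.27}. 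Your identity $(\Del t)^2\norm{d_t\nab\bE_{\bu}^{n+1}}{L^2}^2=\norm{\nab(\bE_{\bu}^{n+1}-\bE_{\bu}^{n})}{L^2}^2$ also correctly recovers the jump sums appearing in \reff{e3.21}.

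Two corrections to your handling of the consistency term. First, your plan is internally inconsistent as written: after splitting $E_{\tp}^{n+1}=\hE_p^{n+1}-\alpha E_q^{n}$, the ``remaining contribution'' pairs $d_t^2q_h$ with $E_q^{n}$, and the inf-sup bounds you then quote ($\norm{E_{\tp}^{n+1}}{L^2}\le\beta_0^{-1}\beta\norm{\nab\bE_{\bu}^{n+1}}{L^2}$ and its differenced version) follow from the momentum error equation \reff{e3.15} and control only $E_{\tp}$, not $E_q$. The fix is easy within your own toolkit: either drop the split entirely and sum by parts directly on $\sum_n(d_t^2q_h(t_{n+1}),E_{\tp}^{n+1})$, as the paper does in \reff{e3.24} and then closes via \reff{e3.25} and \reff{e3.27}; or, keeping the split, Young the boundary term $\alpha\Del t\,(d_tq_h(t_{\ell+1}),E_q^{\ell})$ against $\frac{\alpha}{4}\norm{E_q^{\ell}}{L^2}^2$ (absorbed into the terminal energy) and the summed term $\sum_n(d_tq_h(t_n),d_tE_q^{n})$ against the $\alpha$-jump dissipation $\frac{(\Del t)^2\alpha}{2}\sum_n\norm{d_tE_q^{n}}{L^2}^2$, rather than the $\beta$-jump sum you name. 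Second, your closing discrete Gronwall step is superfluous, and its justification (``growth constant tends to $1$ as $\Del t\to0$'') is not how Gronwall behaves --- a term $\Del t\sum_n\mathcal{E}_h^n$ would produce $e^{CT}$ regardless of $\Del t$. As in the paper, no such term ever arises: every surviving energy quantity is absorbed with coefficient at most $\tfrac14$ into the left-hand side, so the estimate closes directly with the clean constant $C_3(T;q_h)$ and no exponential factor, which is exactly why \reff{e3.21}, unlike the semi-discrete bound \reff{e4.18}, carries no $e^{T/2}$.
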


\begin{proof}
To derive the desired error bound, we need to bound each term 
on the right-hand side of \reff{e3.19}. Before doing that, 
on noting that $E_q^0=0$ and $\bE_{\bu}^0=0$ and \reff{e3.20} 
we rewrite \reff{e3.19} as
\begin{align}\label{e3.22}
&\mathcal{E}_h^{\ell+1}
+\Del t \kappa \norm{\nab E_{\tp}^{1}}{L^2}^2
+\frac{\alpha}2 \norm{E_q^{1}}{L^2}^2 \\
&\qquad\qquad
+\Del t\sum_{n=1}^\ell \Bigl[ \kappa \norm{\nab \hE_p^{n+1}}{L^2}^2
+\frac{\Del t}{2} \bigl( \beta \norm{d_t \nab \bE_{\bu}^{n+1}}{L^2}^2
+\alpha \norm{d_t E_q^{n}}{L^2}^2 \bigr) \Bigr] \no \\
&\qquad
=-(\Del t)^2 \sum_{n=0}^\ell\Bigl[\kappa \bigl( d_t \nab E_{\tp}^{n+1}, 
\nab \hE_p^{n+1}\bigr)
-\bigl(d_t^2 q_h(t_{n+1}), E_{\tp}^{n+1} \bigr) \Bigr] \no\\  
&\qquad\qquad
+\Del t\sum_{n=0}^\ell \bigl(R_h^{n},\hE_p^{n+1} \bigr). \no 
\end{align}

We now estimate each term on the right-hand side of \reff{e3.22}. First, 
by Schwarz inequality, the inverse inequality \reff{e4.0}, 
the inf-sup condition \reff{e4.1}, and \reff{e3.15} we have
\begin{align}\label{e3.23}
\kappa \bigl|\bigl( d_t \nab E_{\tp}^{n+1}, \nab \hE_p^{n+1}\bigr)\bigr|
&\leq \kappa c_0 h^{-1} \norm{\nab \hE_p^{n+1}}{L^2}
\norm{d_t E_{\tp}^{n+1}}{L^2} \\
&\leq \kappa c_0 h^{-1} \beta_0^{-1}
\norm{\nab \hE_p^{n+1}}{L^2}\, \sup_{\bv_h\in \bV_h} 
\frac{\bigl(\div\bv_h, d_t E_{\tp}^{n+1}\bigr)}{\norm{\nab\bv_h}{L^2}} \no \\
&\leq \kappa c_0 h^{-1}  \beta_0^{-1} \norm{\nab \hE_p^{n+1}}{L^2} 
\,\sup_{\bv_h\in \bV_h} \frac{\beta\bigl(d_t\nab\bE_{\bu}^{n+1},\nab\bv_h\bigr)}{\norm{\nab\bv_h}{L^2}} \no \\
&\leq \kappa c_0 h^{-1} \beta_0^{-1} \beta \norm{\nab \hE_p^{n+1}}{L^2}  
\norm{d_t\nab\bE_{\bu}^{n+1}}{L^2} \no \\
&\leq \frac{\kappa}{4\Del t} \norm{\nab \hE_p^{n+1}}{L^2}^2 
+\kappa c_0^2 \beta^2 h^{-2} \beta_0^{-2}\Del t \norm{d_t\nab\bE_{\bu}^{n+1}}{L^2}^2.\no
\end{align}   

To bound the second term on the right-hand side of \reff{e3.22}, we first
use the summation by parts formula and $d_t q_h(t_0)=0$ to get
\begin{align}\label{e3.24}
\sum_{n=0}^\ell \bigl(d_t^2 q_h(t_{n+1}), &E_{\tp}^{n+1} \bigr) 
=\frac{1}{\Del t} \bigl(d_t q_h(t_{\ell+1}), E_{\tp}^{\ell+1} \bigr)
-\sum_{n=1}^\ell \bigl( d_t q_h(t_n), d_t E_{\tp}^{n+1}\bigr).
\end{align}
We then bound each term on the right-hand side of \reff{e3.24} as follows:
\begin{align}\label{e3.25} 
\frac{1}{\Del t}\bigl|\bigl(d_t q_h(t_{\ell+1}),E_{\tp}^{\ell+1} \bigr)\bigr|
&\leq \frac{\beta_0^{-1}\beta}{\Del t} \norm{(q_h)_t}{L^2((t_\ell,t_{\ell+1});L^2)} 
\norm{ \nab \bE_{\bu}^{\ell+1}}{L^2} \\
&\leq \frac{\beta}{4 (\Del t)^2} \norm{\nab \bE_{\bu}^{\ell+1}}{L^2}^2 
+\beta_0^{-2} \beta \norm{(q_h)_t}{L^2((t_\ell,t_{\ell+1});L^2)}^2. \no  
\end{align}
\begin{align}\label{e3.27}
\Bigl|\sum_{n=1}^\ell \bigl(d_t q_h(t_n), d_t E_{\tp}^{n+1}\bigr)\Bigl|
&\leq \sum_{n=1}^\ell \norm{d_t q_h(t_n)}{L^2} \norm{d_t E_{\tp}^{n+1}}{L^2} \\ 
&\leq \beta_0^{-1}\beta \sum_{n=1}^\ell 
\norm{d_t q_h(t_n)}{L^2} \norm{d_t \nab\bE_{\bu}^{n+1}}{L^2} \no \\
&\leq  \frac{\beta}{4}\sum_{n=1}^\ell \norm{d_t\nab\bE_{\bu}^{n+1}}{L^2}^2
+ \beta_0^{-2}\beta \norm{(q_h)_t}{L^2(L^2)}^2.  \no
\end{align} 

Finally, we bound the third term on the right-hand side of \reff{e3.22} by
\begin{align}\label{e3.28}
\bigl| \bigl( R_h^n,\hE_p^{n+1} \bigr) \bigr| 
&\leq \norm{R_h^n}{H^{-1}} \norm{\nab \hE_p^{n+1}}{L^2} \\
&\leq \frac{\kappa}4 \norm{\nab \hE_p^{n+1}}{L^2}^2 
+ \frac{\Del t}{\kappa} \norm{(q_h)_{tt}}{L^2((t_{n-1},t_n);H^{-1})}^2,\no  
\end{align}
where we have used the fact that
\[
\norm{R_h^n}{H^{-1}}^2 \leq \frac{\Del t}3 \int_{t_{n-1}}^{t_n}
\norm{ (q_h)_{tt}(t)}{H^{-1}}^2 \, dt.
\]

Substituting \reff{e3.23}--\reff{e3.28} into \reff{e3.22} and using the 
assumption that $\Del t < \frac{\beta_0^2}{8\kappa\beta c_0^2} h^2$ yields
\begin{align*}
&\beta \norm{\bE_{\bu}^{\ell+1}}{L^2}^2 
+\alpha\norm{E_q^\ell}{L^2}^2  
+\Del t \bigl[ \kappa \norm{\nab E_{\tp}^{1}}{L^2}^2
+\frac{\alpha}2 \norm{E_q^{1}}{L^2}^2 \bigr] \\
&\quad
+\Del t \sum_{n=1}^\ell \Bigl[ \kappa \norm{\nab \hE_p^{n+1}}{L^2}^2
+\frac{\Del t}{2} \bigl( \beta \norm{d_t \nab \bE_{\bu}^{n+1}}{L^2}^2
+\alpha \norm{d_t E_q^{n}}{L^2}^2 \bigr) \Bigr] \no \\
&\hskip 0.8in
\leq 16(\Del t)^2 \bigl[ \beta_0^{-2}\beta \norm{ (q_h)_t}{L^2(L^2)}^2 
+ \kappa^{-1} \norm{ (q_h)_{tt}}{L^2(H^{-1})}^2 \bigr],
\end{align*}
which trivially implies \reff{e3.19}. The proof is complete.
\end{proof}

\begin{theorem}\label{thm4.2}
The solution of the fully discrete Algorithm 1 satisfies the following 
error estimates:
\begin{align} \label{e4.38}
&\max_{0\leq n\leq N}
\bigl[\sqrt{\beta} \norm{\nab(\bu(t_n)-\bu_h^n)}{L^2}
+\sqrt{\alpha} \norm{q(t_n)-q_h^n}{L^2} \bigr] \\
&\qquad
+\Bigl(\Del t\sum_{n=0}^{N}\kappa\norm{\nab(p(t_n)-p_h^n)}{L^2}^2\Bigr)^{\frac12}
\no \\
&\qquad\qquad
\leq \bigl[ C_1(T;\bu,\tp,p,q)^{\frac12}\,e^{\frac{T}{2}}
+\widehat{C}_1(T;\bu,p,q)^{\frac12} \bigr] h
+\widehat{C}_3(T;q_h)^{\frac12} \Del t, \no
\end{align}
provided that $C_1(T;\bu,\tp,p,q)<\infty, \widehat{C}_1(T;\bu,p,q)<\infty$,
$\widehat{C}_3(T;q_h)<\infty$, and 
$\Del t < \frac{\beta_0^2}{8\kappa\beta c_0^2} h^2$.

Moreover, if, in addition, $C_2(T;\bu,\tp,p,q)<\infty$ and 
$\widehat{C}_2(T;\bu,q)<\infty$, then there also holds
\begin{align} \label{e4.39}
&\max_{0\leq n\leq N}
\bigl[\sqrt{\beta} \norm{\nab(\bu(t_n)-\bu_h^n)}{L^2}
+\sqrt{\alpha} \norm{q(t_n)-q_h^n}{L^2} \bigr]\\
&\hskip 1in
\leq \bigl[ C_2(T;\bu,\tp,p,q)^{\frac12}\,e^{\frac{T}{2}}
+\widehat{C}_2(T;\bu,q)^{\frac12} \bigr] h^2
+\widehat{C}_3(T;q_h)^{\frac12} \Del t. \no
\end{align}

\end{theorem}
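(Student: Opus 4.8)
The plan is to prove \reff{e4.38} and \reff{e4.39} by a standard error-splitting argument that inserts the semi-discrete (in space) solution $(\bu_h(t),\tp_h(t),q_h(t))$ of \reff{e4.3}--\reff{e4.2b} as an intermediary between the exact solution $(\bu,\tp,q,p)$ and the fully discrete solution $\{(\bu_h^n,\tp_h^n,q_h^n)\}$. Concretely, at each node $t_n$ I would write
\begin{align*}
\bu(t_n)-\bu_h^n &= \bigl[\bu(t_n)-\bu_h(t_n)\bigr] + \bE_{\bu}^n,\\
q(t_n)-q_h^n &= \bigl[q(t_n)-q_h(t_n)\bigr] + E_q^n,
\end{align*}
with $\bE_{\bu}^n=\bu_h(t_n)-\bu_h^n$ and $E_q^n=q_h(t_n)-q_h^n$ exactly the quantities analyzed in Lemma \ref{lem3.2} and Theorem \ref{thm3.2}. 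The triangle inequality then reduces everything to adding a spatial-discretization contribution, controlled by Theorem \ref{theorem3.1}, and a temporal-discretization contribution, controlled by Theorem \ref{thm3.2}.

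For the two $L^\infty$-in-time terms this is immediate. The bound \reff{e4.18} holds for every $t\in[0,T]$, hence in particular at $t=t_n$, so $\max_n[\sqrt{\beta}\norm{\nab(\bu(t_n)-\bu_h(t_n))}{L^2}+\sqrt{\alpha}\norm{q(t_n)-q_h(t_n)}{L^2}]$ is bounded by $[C_1^{1/2}e^{T/2}+\widehat{C}_1^{1/2}]h$, while \reff{e3.21} supplies $\max_n[\sqrt{\beta}\norm{\nab\bE_{\bu}^n}{L^2}+\sqrt{\alpha}\norm{E_q^n}{L^2}]\leq C_3^{1/2}\Del t$. Adding these two estimates produces the $\nab\bu$- and $q$-parts of the left-hand side of \reff{e4.38}, the $h$-term carrying the spatial constants and the $\Del t$-term carrying what will be absorbed into $\widehat{C}_3$.

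The pressure term is the delicate piece and is where I expect the real bookkeeping to lie. Using the decomposition recorded before Lemma \ref{lem3.2}, namely $p_h(t_n)-p_h^n=\hE_p^n+\alpha\Del t\,d_tq_h(t_n)$, I would split
\[
p(t_n)-p_h^n=\bigl[p(t_n)-p_h(t_n)\bigr]+\hE_p^n+\alpha\Del t\,d_tq_h(t_n),
\]
and estimate $\Del t\sum_n\kappa\norm{\nab(p(t_n)-p_h^n)}{L^2}^2$ as three pieces. The middle piece $\Del t\sum_n\kappa\norm{\nab\hE_p^{n+1}}{L^2}^2$ is exactly the quantity bounded in \reff{e3.21} and contributes at order $\Del t$. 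Two technical points then remain. First, the spatial piece $\Del t\sum_n\kappa\norm{\nab(p(t_n)-p_h(t_n))}{L^2}^2$ is a nodal Riemann sum of the continuous quantity $\kappa\int_0^T\norm{\nab(p-p_h)}{L^2}^2\,dt$ controlled by \reff{e4.18}, so I must compare the two by a quadrature estimate, which is licensed by the time regularity of $p-p_h$ furnished by Theorems \ref{regularity} and \ref{regularity_1}. Second, the remainder $\alpha\Del t\,d_tq_h(t_n)$ must be shown to contribute only at order $\Del t$ in the same norm; writing $d_tq_h(t_n)=\frac1{\Del t}\int_{t_{n-1}}^{t_n}(q_h)_t\,ds$ and applying Cauchy--Schwarz in time reduces this to a time-derivative bound on $q_h$, the precise form of which is what the statement's constant $\widehat{C}_3$ records and what distinguishes it from the bare $C_3$ of Theorem \ref{thm3.2}.

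Finally, the higher-order estimate \reff{e4.39} follows by repeating the argument verbatim but invoking \reff{e4.19} in place of \reff{e4.18} for the spatial contribution, which upgrades the $h$-term to $h^2$ while leaving the $\Del t$-term untouched; the mesh constraint $\Del t<\frac{\beta_0^2}{8\kappa\beta c_0^2}h^2$ is inherited directly from Theorem \ref{thm3.2} and is needed only there. The main obstacle is thus not the outer triangle inequality, which is routine, but the careful treatment of the pressure sum --- reconciling the discrete nodal sum with the continuous integral and absorbing the $\alpha\Del t\,d_tq_h$ correction --- so that both the spatial and temporal errors land at their claimed orders.
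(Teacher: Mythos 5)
Your proposal takes exactly the paper's route: the paper's proof is precisely the triangle inequality applied to the splittings $\bu(t_n)-\bu_h^n=\bigl[\bu(t_n)-\bu_h(t_n)\bigr]+\bE_{\bu}^n$, $q(t_n)-q_h^n=\bigl[q(t_n)-q_h(t_n)\bigr]+E_q^n$, $p(t_n)-p_h^n=\bigl[p(t_n)-p_h(t_n)\bigr]+E_p^n$, combined with the semi-discrete estimates \reff{e4.18}--\reff{e4.19} of Theorem \ref{theorem3.1} and the temporal estimate \reff{e3.21} of Theorem \ref{thm3.2}. If anything you are more careful than the published two-line proof, which silently passes over the two pressure-term issues you correctly flag --- replacing the integral $\kappa\int_0^T\norm{\nab(p-p_h)(t)}{L^2}^2\,dt$ by the nodal sum, and absorbing the correction $\alpha\Del t\, d_tq_h(t_n)$ in $E_p^n=\hE_p^n+\alpha\Del t\, d_tq_h(t_n)$ into the constant $\widehat{C}_3$.
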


\begin{proof}
The assertions follow easily from first using the triangle inequality on
\begin{align*}
\bu(t_n)-\bu_h^n &= \bE_{\bu}(t_n) + \bE_{\bu}^n, \\
q(t_n)-q_h^n &=E_q(t_n) + E_q^n, \\
p(t_n)-p_h^n &=E_p(t_n) + E_p^n,
\end{align*}
and then appealing to Theorems \ref{thm3.2} and \ref{thm4.2}.
\end{proof}

\begin{remark}
(a) In light of Theorems \ref{regularity} and \ref{regularity_1}, the 
regularity assumptions of Theorem \ref{thm4.2} are valid if the domain
$\Ome$ and datum functions $\bbf$ and $\bu_0$ are sufficient regular.

(b) It can be shown that all the results proved in this subsection for Algorithm 1 
still hold for Algorithm 2. The main differences are (i) the ``correct" 
discrete pressure $p_h^n$ for Algorithm 2 is $p_h^n:=\tp_h^n+\alpha q_h^n$;
(i) the ``correct" error functional $\mathcal{E}_h^\ell$ for Algorithm 2 is 
\[
\mathcal{E}_h^{\ell}:= \frac12 \Bigl[ \beta \norm{\nab \bE_{\bu}^{\ell}}{L^2}^2
+ \alpha \norm{E_q^{\ell}}{L^2}^2 \Bigr], \qquad\ell=0,1,2,\cdots
\]
\end{remark}

\section{Numerical experiments}\label{sec-5}

In this section we present some $2$-D numerical experiments to gauge the
efficiency of the fully discrete finite element methods developed in this paper.
Three tests are performed on two different geometries. The gel used in
all three tests is the Ploy(N-isopropylacrylamide) (PNIPA) hydrogel
(cf. \cite{titm} and the references therein). The material constants/parameters,
which were reported in \cite{titm}, are given as follows:
\begin{alignat*}{2}
&E=6\times 10^3, &&\qquad \mbox{Young's modulus},\\
&\nu=0.43, &&\qquad \mbox{Poisson's ratio}, \\
&K=\frac{E}{3(1-2\nu)}=14285.7, &&\qquad \mbox{bulk modulus},\\
&G=\frac{E}{2(1+\nu)}=2097.9, &&\qquad \mbox{shear modulus}.
\end{alignat*}
Two other material constants/parameters, which were not given in \cite{titm}, 
are taken as follows in our numerical tests:
\begin{alignat*}{2}
&\varphi=0.15, &&\qquad \mbox{porosity},\\
&\xi=100, &&\qquad \mbox{friction constant}.
\end{alignat*}
In addition, we use the following initial condition in all our numerical
tests:
\[
\bu_0(x)=10^{-4} \sin(x_1+x_2) (1,1).
\]

\bigskip
{\bf Test 1:} Let $\Ome=(0,1)^2$. The external force is taken as
\begin{align*}
\bbf=(f_1,f_2)=0.1\mathbf{t}_{\mbox{\tiny tangent}},
\end{align*}
where $\mathbf{t}_{\mbox{\tiny tangent}}$ denotes the unit (clockwise) 
tangential vector on $\p\Ome$. Note that the compatibility condition
\begin{equation}\label{e5.1}
\int_{\p\Ome} \bbf(x)\, dS=0
\end{equation}
is trivially fulfilled.

\begin{figure}[htb]
\centerline{
\includegraphics[scale=0.2]{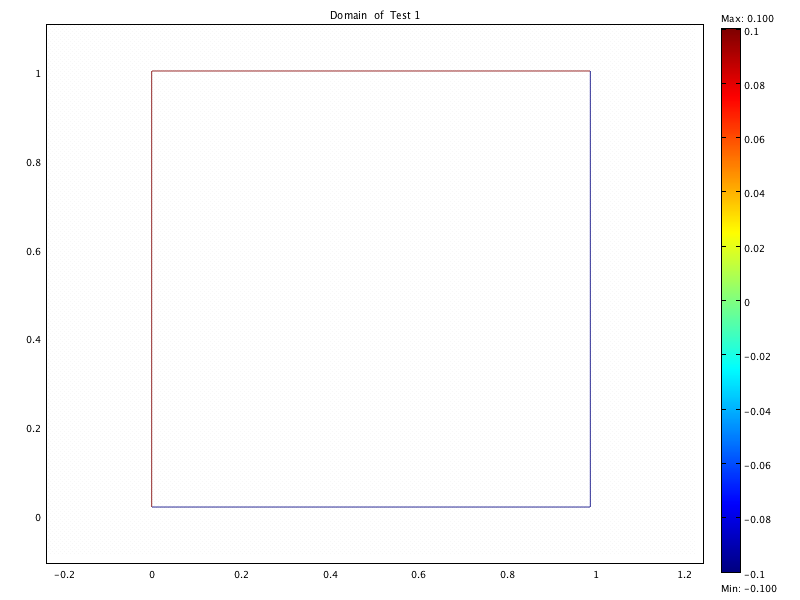}
\includegraphics[scale=0.2]{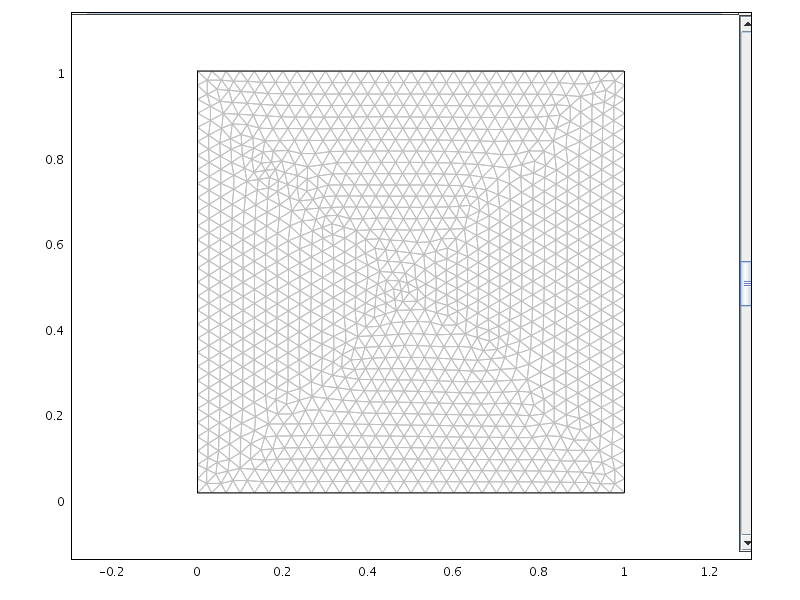}
}
\caption{Computational domain, boundary data, and mesh of Test 1}
\label{fig1a}
\end{figure}

Figure \ref{fig1a} shows the computational domain, color plot of 
the force function $f_1+f_2$, and the mesh on which the numerical 
solution is computed.  The mesh consists of $2360$ elements, and
total number of degrees of freedom for the test is $12164$. 
$\Del t=0.01$ is used in this test. 

Figure \ref{fig1b} displays snapshots of the computed solution  
at three time incidences. Each graph contains color plot of the
computed pressure $p_h^n:=\tp_h^n+\alpha q_h^{n-1}$ and arrow plot 
of the computed displacement field $\bu_h^n$. The three graphs 
on the first row are plotted on the computational domain $\Ome=(0,1)^2$,
while three graphs on the second row are respectively deformed shape plots 
of the three graphs on the first row with $500$ times magnification,
which shows the deformation of the square gel under the mechanical 
force $\bbf$ on the boundary. As expected, the gel is slightly 
rotated clockwise and is slightly bent near the top and bottom edges.
We also note that the expected conserved quantities are indeed conserved
in the computation, their respective values are given as follows:
\begin{alignat*}{2}
&C_{\bu}=\int_{\p\Ome} \bu_h^n(x)\, dS\equiv 9.935\times 10^{-5},\quad
&& C_q=\int_{\Ome} q_h^n(x)\, dx\equiv 9.935\times 10^{-5},\\
&C_{\tp}=\int_{\Ome} \tp_h^n(x)\, dx\equiv 0, \quad
&& C_{p}=\int_{\Ome} p_h^n(x)\, dx\equiv 1.489. 
\end{alignat*}

\begin{figure}[htb]
\centerline{
\includegraphics[scale=0.18]{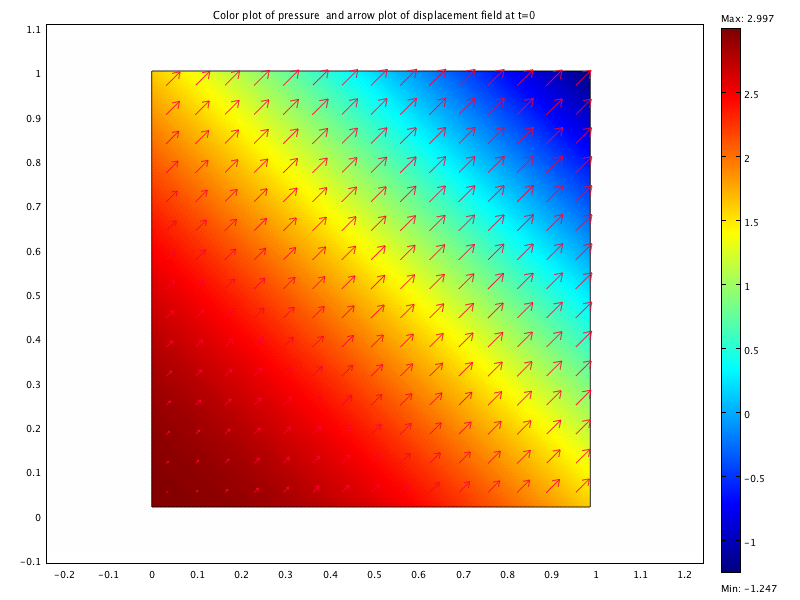}
\includegraphics[scale=0.18]{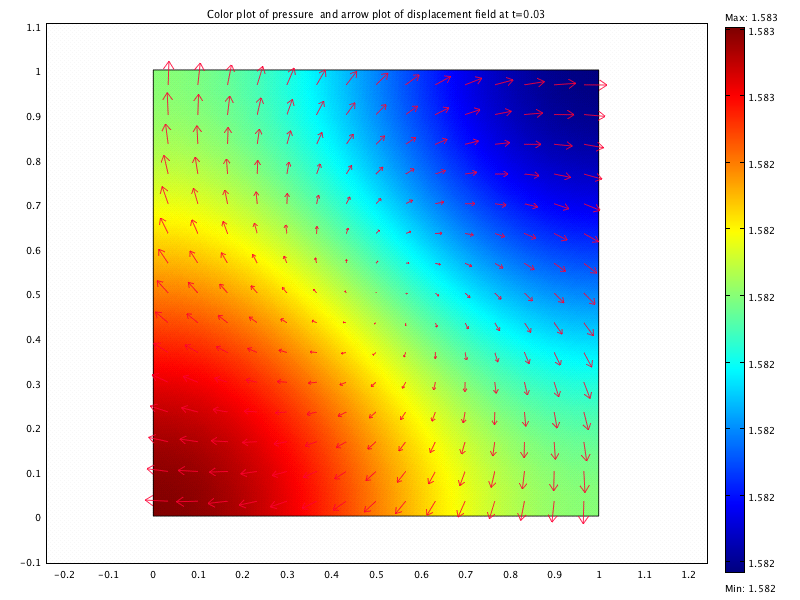}
\includegraphics[scale=0.18]{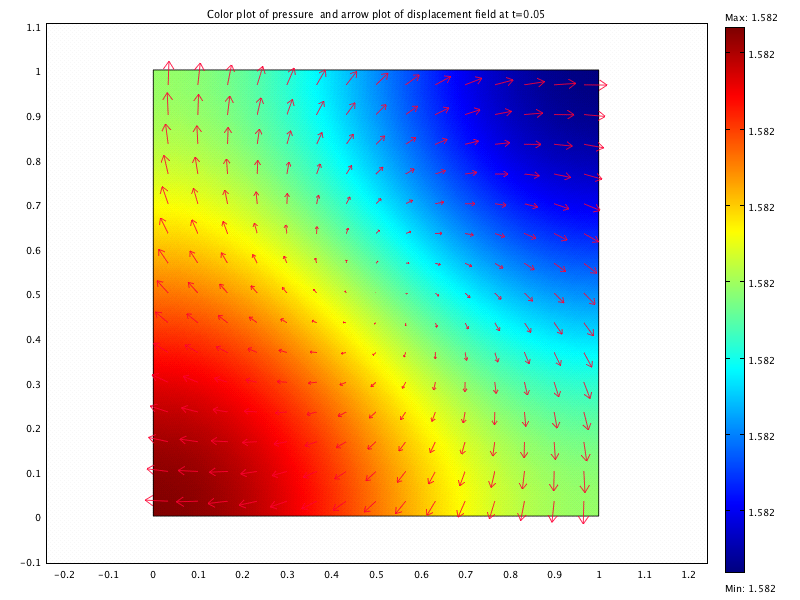}
}
\centerline{
\includegraphics[scale=0.18]{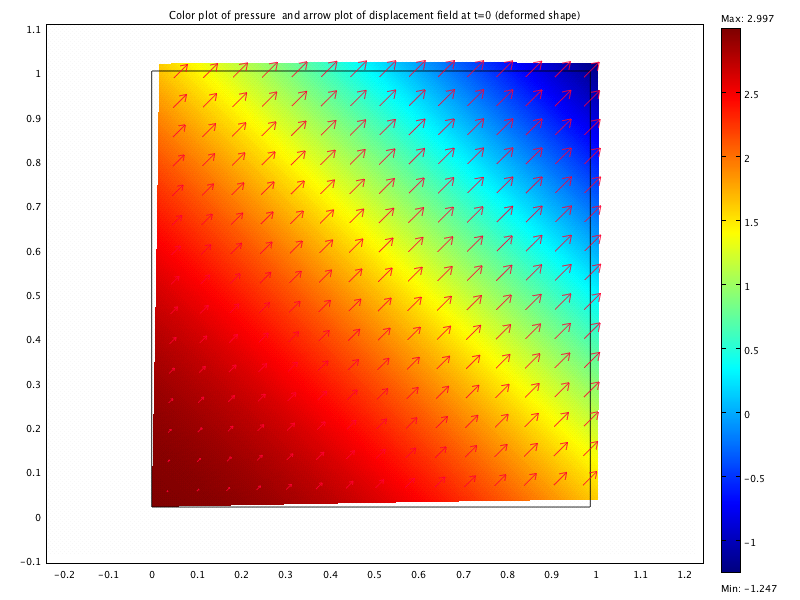}
\includegraphics[scale=0.18]{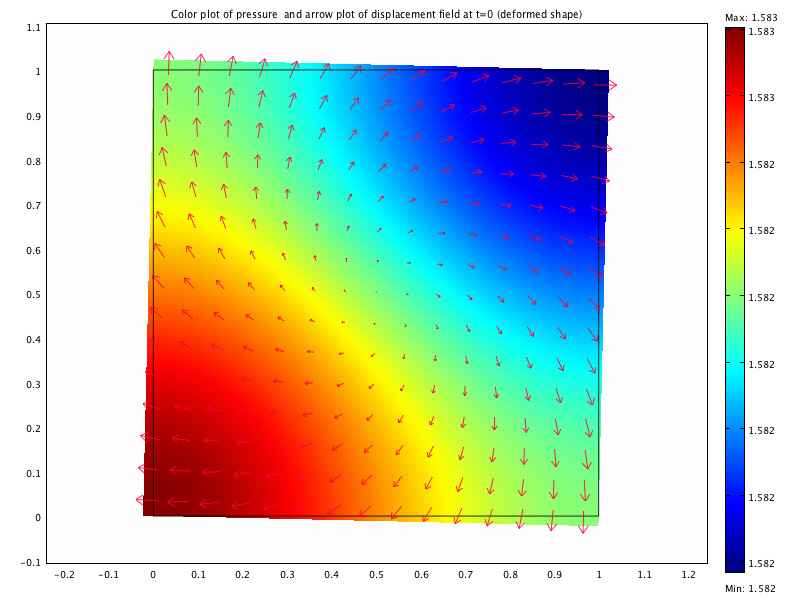}
\includegraphics[scale=0.18]{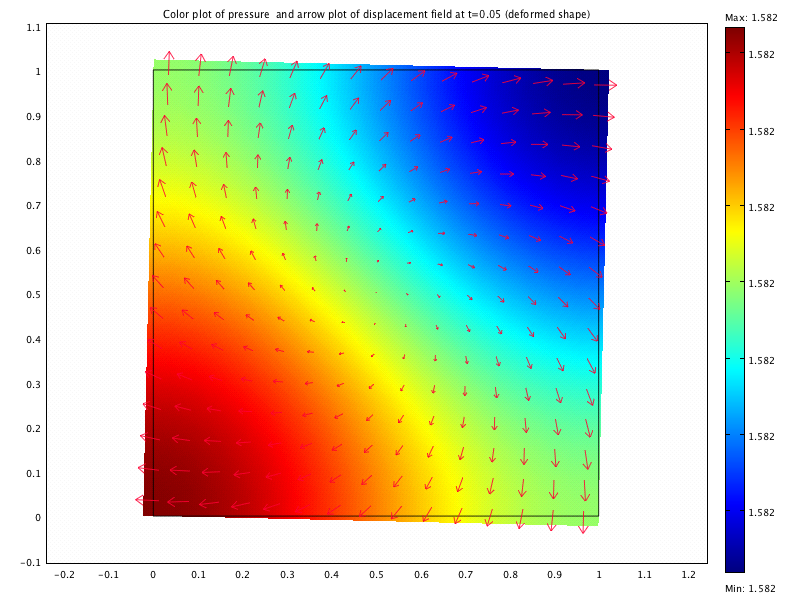}
}
\caption{Test 1: color plots of computed pressure $p_h^n$ and arrow plots of 
computed displacement $\bu_h^n$ at $n=0,3,5$ (first row). Deformed shape plots of 
the graphs on the first row (second row). $\Del t= 0.01$.} 
\label{fig1b}
\end{figure}

\bigskip
{\bf Test 2:} This test is same as Test 1 except $\bbf=(f_1,f_2)$ is changed to
\begin{align*}
&f_1(x):= \left\{ \begin{array}{rl}
           0.5 & \qquad \mbox{for } x_1=0,\, 0\leq x_2\leq 1,\\
           -0.5 & \qquad \mbox{for } x_1=1,\, 0\leq x_2\leq 1, \\
             0 & \qquad \mbox{for } x_2=0,1,\, 0\leq x_1\leq 1,
            \end{array} \right. 
\\
\\
&f_2(x):\equiv 0.
\end{align*}
So parallel forces of opposite directions are
applied at the left and the right boundary of the square gel. 
Clearly, the compatibility condition \reff{e5.1} is satisfied.

\begin{figure}[htb]
\centerline{
\includegraphics[scale=0.2]{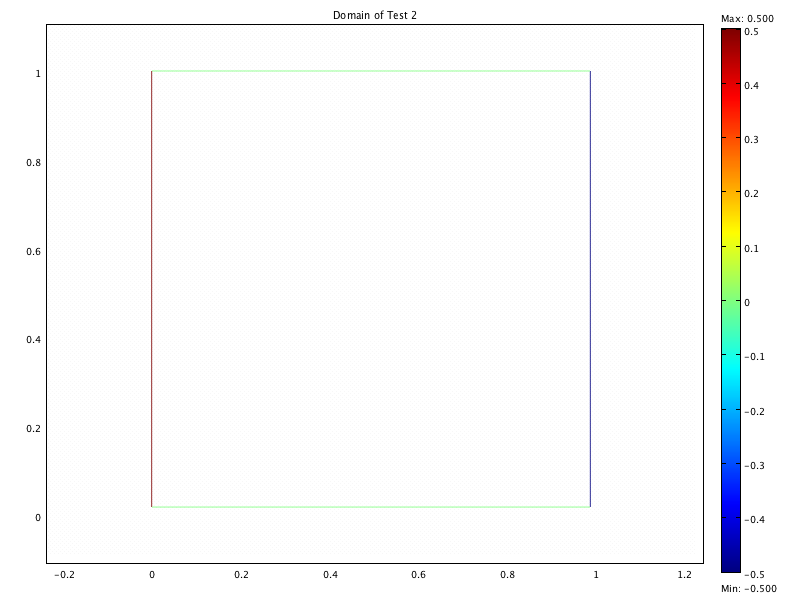}
\includegraphics[scale=0.2]{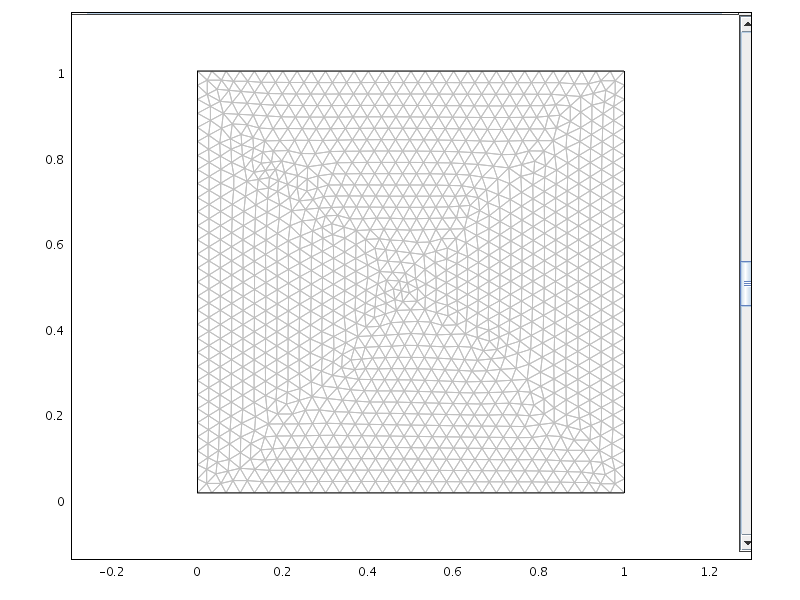}
}
\caption{Computational domain, boundary data, and mesh of Test 2}
\label{fig2a}
\end{figure}

Like Figure \ref{fig1a}, Figure \ref{fig2a} shows the computational domain, 
color plot of the force function $f_1+f_2$. The mesh parameters
are also same as those of Test 1, including the time step $\Del t=0.01$. 

\begin{figure}[htb]
\centerline{
\includegraphics[scale=0.18]{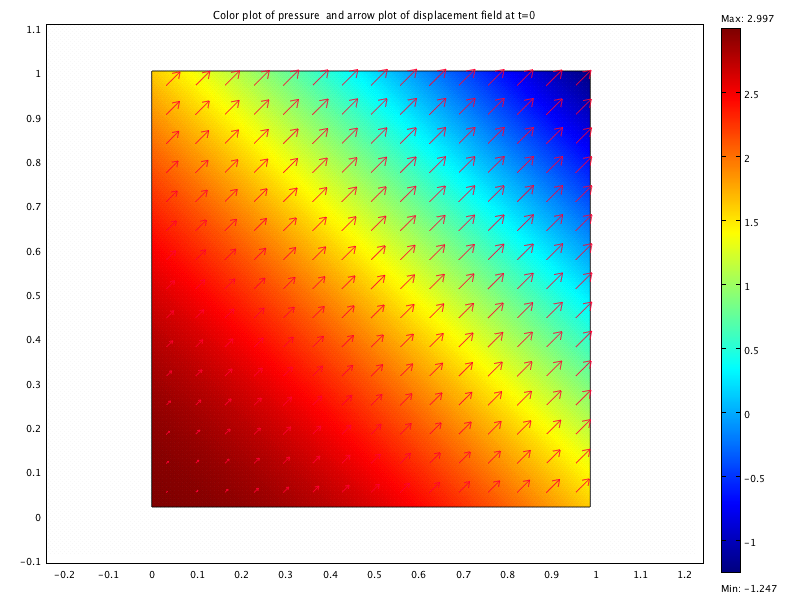}
\includegraphics[scale=0.18]{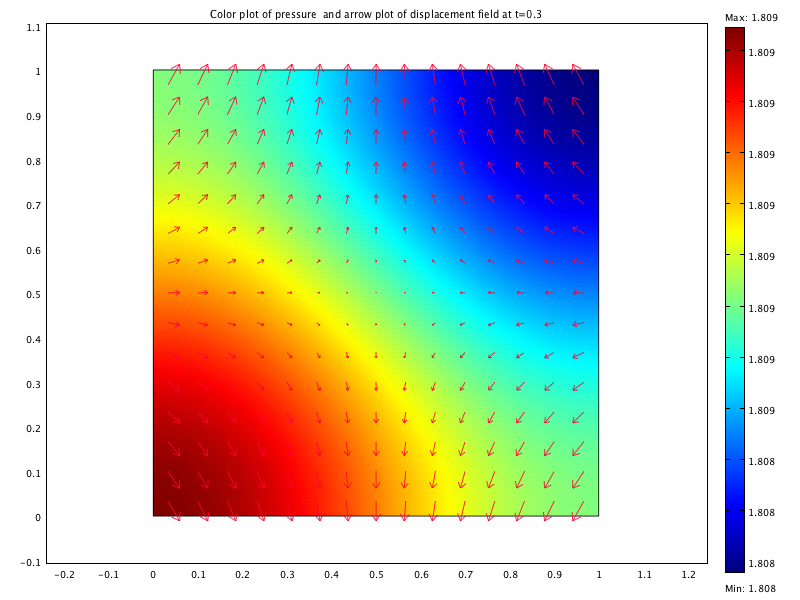}
\includegraphics[scale=0.18]{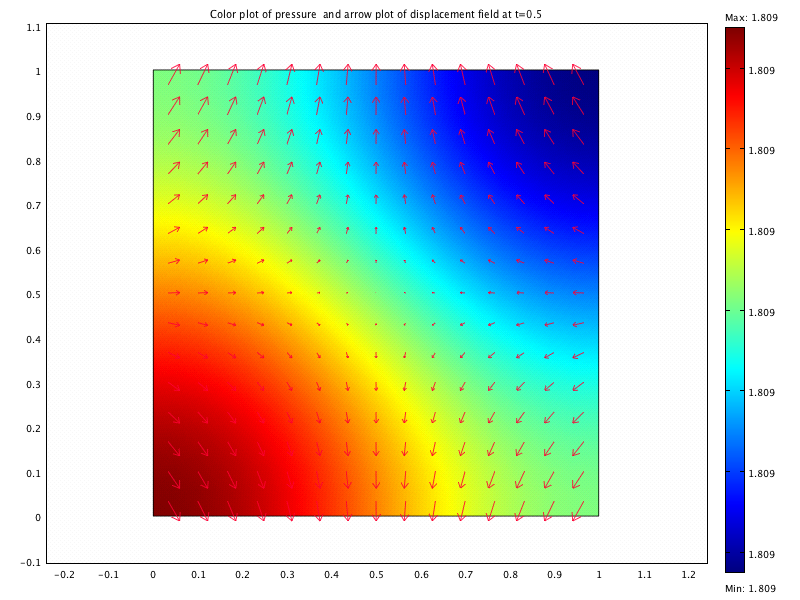}
}
\centerline{
\includegraphics[scale=0.18]{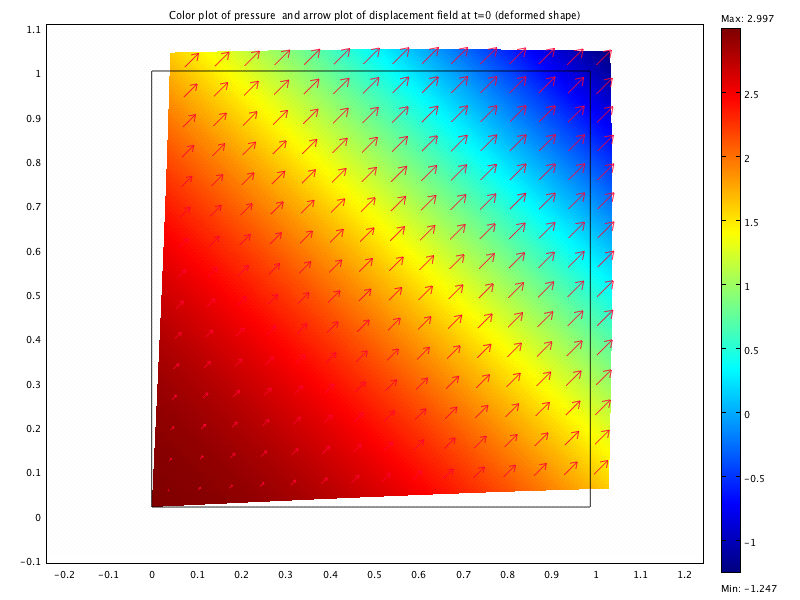}
\includegraphics[scale=0.18]{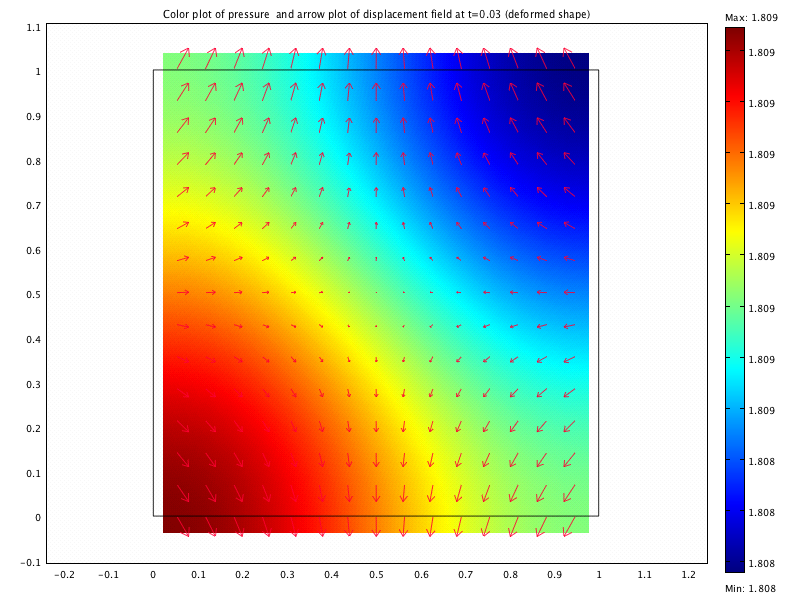}
\includegraphics[scale=0.18]{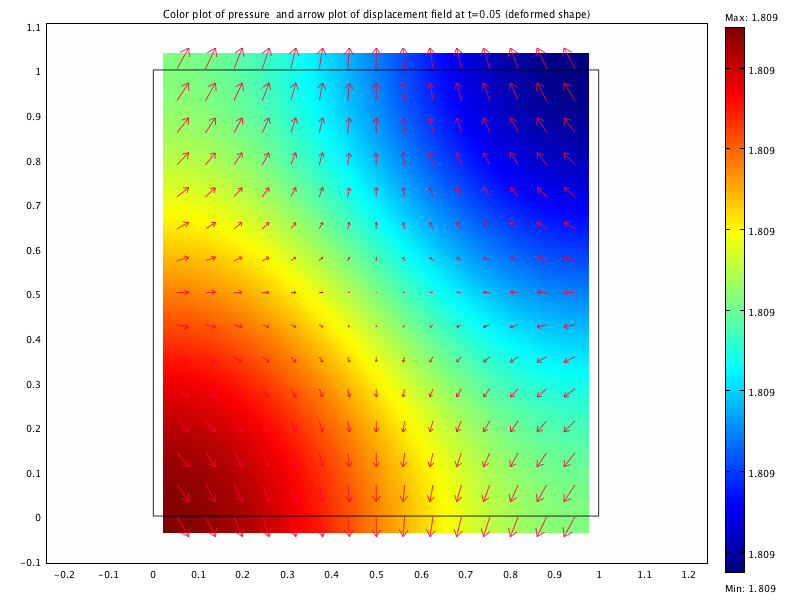}
}
\caption{Test 2: color plots of computed pressure $p_h^n$ and arrow plots of
computed displacement $\bu_h^n$ at $n=0,3,5$ (first row). Deformed shape plots of
the graphs on the first row (second row). $\Del t= 0.01$.}
\label{fig2b}
\end{figure}

Figure \ref{fig2b} is the counterpart of Figure \ref{fig1b} for Test 2.
Since parallel forces of equal magnitude but opposite directions are
applied on the left and the right boundary of the square gel, so
the gel is squeezed horizontally. Because of the incompressibility of
gel, the total volume must be conserved. As a result, the deformation
in the vertical direction is expected. The second row of Figure \ref{fig2b}
precisely shows such a deformation (with $500$ times magnification).
It should be noted that the swelling dynamics of the gel goes super fast,
it reaches the equilibrium in very short time. For the conserved
quantities, $C_q$ and $C_{\bu}$ are same as those in Test 1, and
the other two numbers are given by
\begin{align*}
C_{\tp}=\int_{\Ome} \tp_h^n(x)\, dx\equiv 0.321, \qquad
C_{p}=\int_{\Ome} p_h^n(x)\, dx\equiv 1.809. 
\end{align*}
Recall that $C_{\tp}$ and $C_p$ depend on the force function $\bbf$.

\bigskip
{\bf Test 3:} Same material parameters/constants and initial condition
$\bu_0$ as in Tests 1 and 2 are assumed. However, the computational domain
is changed to the following one
\[
\Ome:=\Bigl\{x\in \mathbf{R}^2;\, \frac{x_1^2}{0.16}+\frac{x_2^2}{0.04}\leq 1\Bigr\},
\]
and the external force function $\bbf$ is taken as
\begin{align*}
&f_1(x):= \left\{ \begin{array}{rl}
           0.5 & \, \mbox{for } x\in\p\Ome, \, -0.2<x_1<0, \\
           -0.5 & \, \mbox{for } x\in\p\Ome, \, 0<x_1<0.2 ,
            \end{array} \right.  
\\
\\
&f_2(x):\equiv 0
\end{align*}
As in Test 2, the above $\bbf$ means that parallel forces of same magnitude but
opposite directions are applied at the left half and the right half ellipse (boundary),
however, these two forces now collide at two points $(0,\pm 0.2)$ on the boundary.
Clearly, the compatibility condition \reff{e5.1} is satisfied.

\begin{figure}[htb]
\centerline{
\includegraphics[scale=0.2]{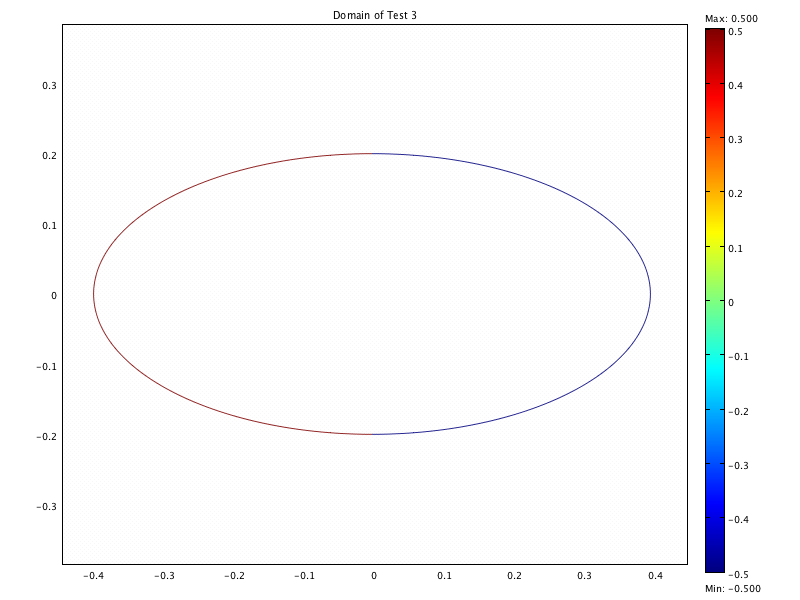}
\includegraphics[scale=0.2]{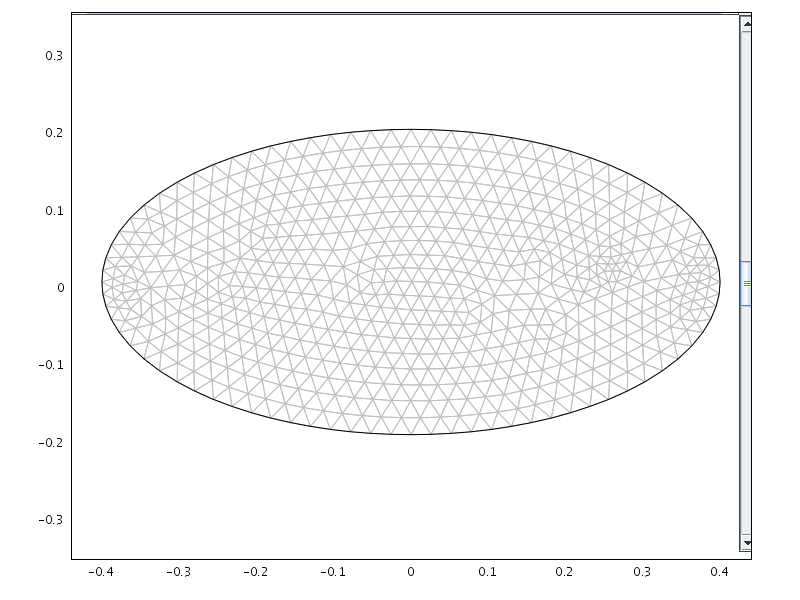}
}
\caption{Computational domain, boundary data, and mesh of Test 3}
\label{fig3a}
\end{figure}

\begin{figure}[htb]
\centerline{
\includegraphics[scale=0.18]{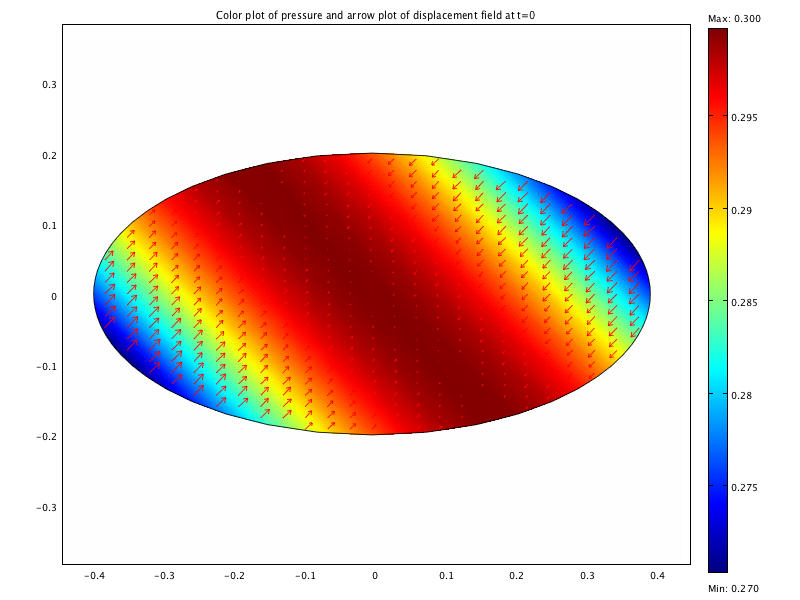}
\includegraphics[scale=0.18]{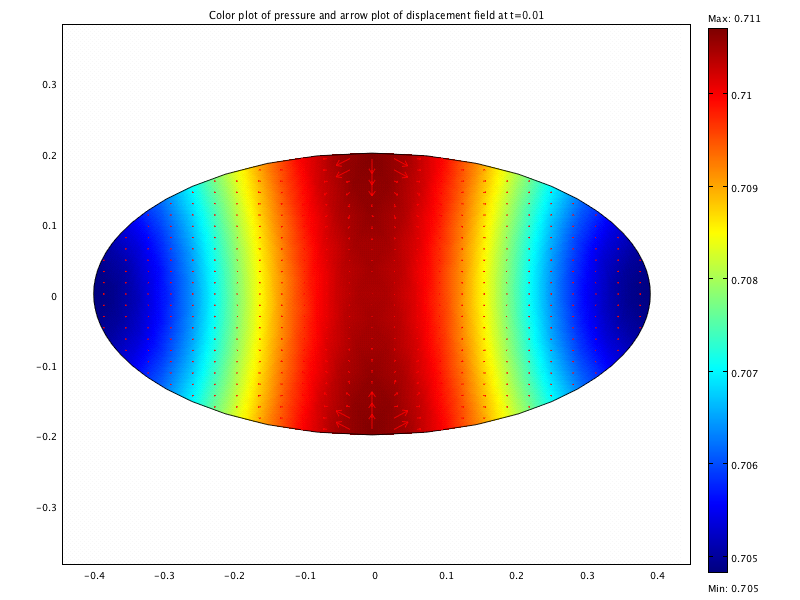}
\includegraphics[scale=0.18]{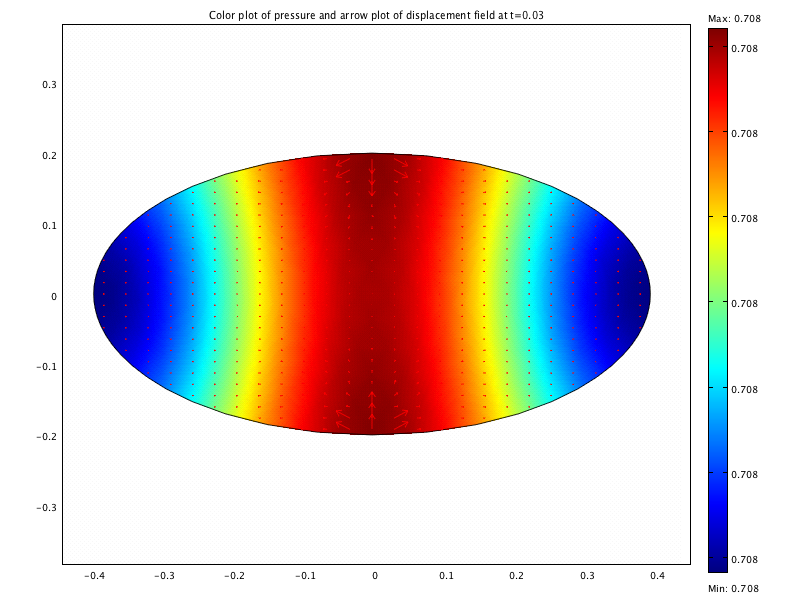}
}
\centerline{
\includegraphics[scale=0.18]{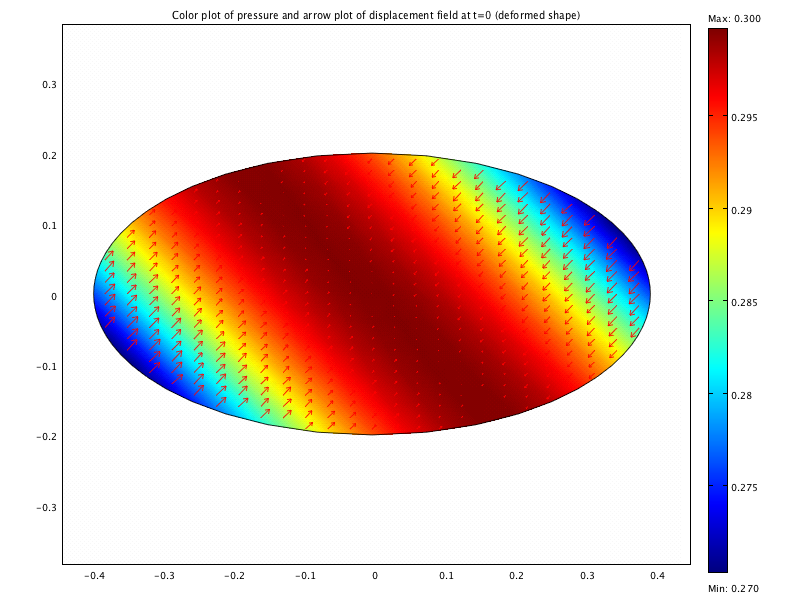}
\includegraphics[scale=0.18]{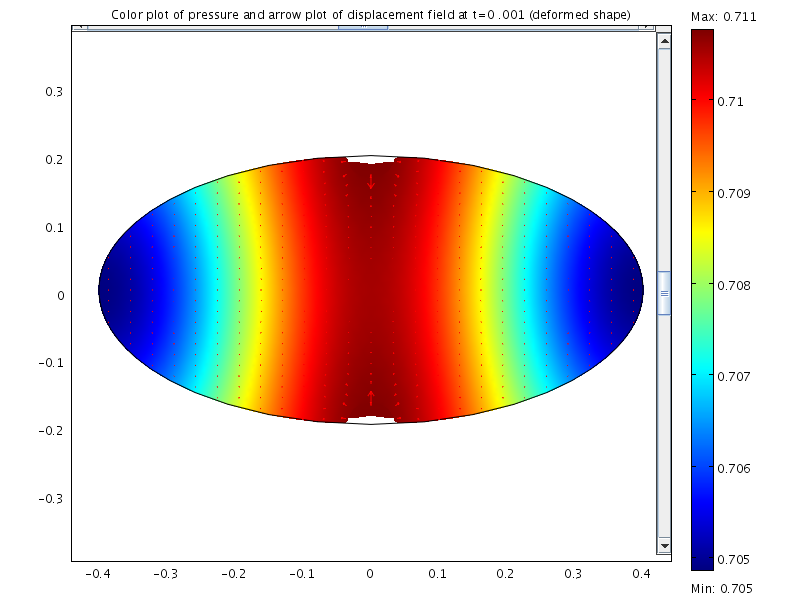}
\includegraphics[scale=0.18]{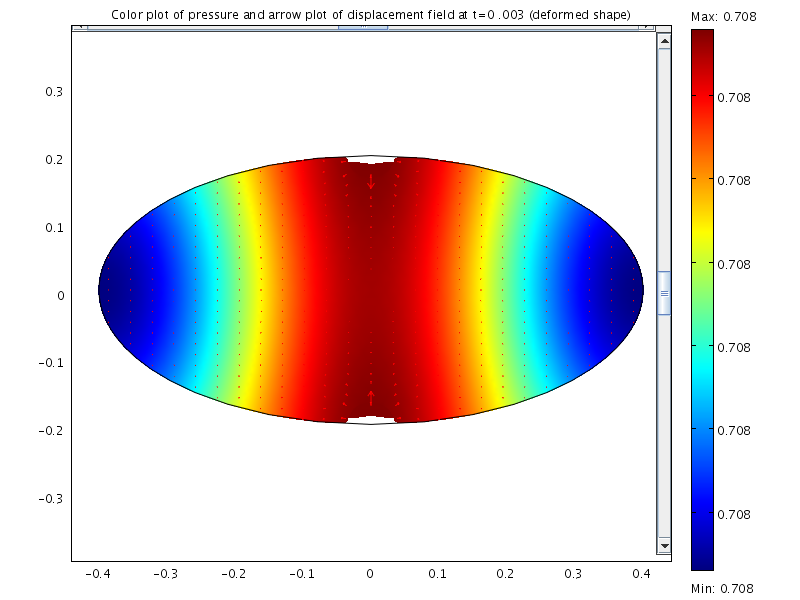}
}
\caption{Test 3: color plots of computed pressure $p_h^n$ and arrow plots of
computed displacement $\bu_h^n$ at $n=0,1,3$ (first row). Deformed shape plots of
the graphs on the first row (second row). $\Del t= 0.001$.}
\label{fig3b}
\end{figure}

Figure \ref{fig3a} is the counterpart of Figures \ref{fig1a} and \ref{fig2a} 
for Test 3. The mesh consists of $1200$ elements, and total number of degrees 
of freedom for the test is $6244$. A smaller time step $\Del t=0.001$ is used 
in this test.  

Figure \ref{fig3b} is the counterpart of Figures \ref{fig1b} and \ref{fig2b} 
for Test 3. The deformations displayed on the second row are magnified by
$20$ times instead of $500$ times as in Figures \ref{fig1b} and \ref{fig2b}.
Due to curve boundary, the applied parallel forces of same magnitude but 
opposite directions collide at the boundary points $(0,\pm 0.2)$.
It is expected that the gel should buckle around those two points,
which is clearly seen in the plots on the second row of
Figure \ref{fig3b}. We also note that because relatively bigger 
forces are applied on the gel, the swelling dynamics of the gel goes 
even faster, hence, reaches the equilibrium quicker. This is the main 
reason to use a smaller time step for the simulation.

The conserved quantities for Test 3 are given as follows:
\begin{alignat*}{2}
&C_{\bu}=\int_{\p\Ome} \bu_h^n(x)\, dS\equiv 4.902\times 10^{-6},
&&\qquad C_q=\int_{\Ome} q_h^n(x)\, dx\equiv 4.902\times 10^{-6},\\
&C_{\tp}=\int_{\Ome} \tp_h^n(x)\, dx\equiv 0.105, 
&&\qquad C_{p}=\int_{\Ome} p_h^n(x)\, dx\equiv 0.178. 
\end{alignat*}

\medskip
{\bf Acknowledgment:}  The first author would like to thank Professor Masao Doi
of Tokyo University for introducing the gel swelling dynamic model to the author,
and for his many stimulating discussions at IMA of University of Minnesota,
where they both were long-term visitors in Fall 2004.


\end{document}